\numberwithin{equation}{section}
\def\opn#1#2{\def#1{\operatorname{#2}}} 
\opn\chara{char} \opn\length{\ell}
 \opn\projdim{proj\,dim} \opn\injdim{inj\,dim}
\opn\rank{rank} \opn\depth{depth} \opn\grade{grade}
\opn\height{height} \opn\embdim{emb\,dim} \opn\codim{codim}
\opn\Tr{Tr} \opn\bigrank{big\,rank}
\opn\superheight{superheight}\opn\lcm{lcm}
\opn\trdeg{tr\,deg}%
\opn\reg{reg} \opn\lreg{lreg}
\opn\Ker{Ker} \opn\Coker{Coker} \opn\Im{Im} \opn\Hom{Hom}
\opn\Tor{Tor} \opn\Ext{Ext} \opn\End{End} \opn\Aut{Aut} \opn\id{id}
\opn\nat{nat}
\opn\pff{pf}
\opn\Pf{Pf} \opn\GL{GL} \opn\SL{SL} \opn\mod{mod} \opn\ord{ord}
\def\Implies{\ifmmode\Longrightarrow \else
     \unskip${}\Longrightarrow{}$\ignorespaces\fi}
\def\implies{\ifmmode\Rightarrow \else
     \unskip${}\Rightarrow{}$\ignorespaces\fi}
\def\iff{\ifmmode\Longleftrightarrow \else
     \unskip${}\Longleftrightarrow{}$\ignorespaces\fi}
\newcommand{\p}{\partial}
\newtheorem{Theorem}{Theorem}[section]
\newtheorem{Lemma}[Theorem]{Lemma}
\newtheorem{Remark}[Theorem]{Remark}
\newtheorem{example}{Example}[section]
\let\epsilon=\varepsilon
\let\kappa=\varkappa
\opn\ini{in} \opn\inm{inm} \opn\Sym{Sym} \opn\diag{diag}
\opn\Ii{(i)} \opn\Iii{(ii)}
\begin{document}
\title{On spectral Petrov-Galerkin method for solving optimal control problem governed by a two-sided fractional diffusion equation }


\author{Shengyue Li$^a$, Wanrong Cao$^{a,*}$, Yibo Wang$^a$}
\date{}
\date{\footnotetext{$^*$Corresponding author: wrcao@seu.edu.cn.}
	\footnotetext{$^a$ School of Mathematics, Southeast University, Nanjing, 210096.}}

\maketitle
{\bf\small Abstract}:\ {\small} In this paper, we investigate a spectral Petrov-Galerkin method for an optimal control problem governed by a two-sided space-fractional diffusion-advection-reaction equation. Taking into account the effect of singularities near the boundary generated by the weak singular kernel of the fractional operator, we establish the regularity of the problem in weighted Sobolev space. Error estimates are provided for the presented spectral Petrov-Galerkin method and the convergence orders of the state  and  control variables are determined. Furthermore, a fast projected gradient algorithm with a quasi-linear complexity is presented to solve the resulting discrete system.  Numerical experiments show the validity of theoretical findings and efficiency of the proposed fast algorithm.

\vspace{0.5em}
\textbf{AMS subject classifications:}  26A33, 35B65, 41A25, 49K20, 65M70

\vspace{0.5em}
{\bf\small Keywords}:\ {\small} optimal control problem; diffusion-advection-reaction; weighted Sobolev space; spectral Petrov-Galerkin method;  error estimate

\section{Introduction}

{Optimal control problems with partial differential equation constraints  have wide applications in science and engineering fields, such as biology, ecology,  economic, finance, etc., see \cite{athans,chen2005,geer,lions,hinze}. In recent two decades, fractional diffusion equations (FDEs) have  attracted increasing attention due to the capability of modeling complex physical phenomena with anomalous diffusion or long-time memory,  such as contaminant transport in ground water flow \cite{benson2000}, anomalous transport in biology \cite{hofl}, viscoelasticity \cite{main2010}, etc.  As a consequence,  both theory and numerical methods to optimal control problems governed by FDEs have gradually aroused more and more researchers'  interest.
Various numerical methods have been investigated for the optimal control problems with FDE constraints, e.g. finite element methods
\cite{antil2016, antil2017,du2018,gun2019, jin2020,  xie2020, zhou2015, zhang2019, zhou2019}, spectral Galerkin  methods \cite{wang2021,ye2016,ye2013,zhanglu}, collocation methods \cite{li2018,li2019,zaky2017}, etc. 

}

In this paper, we consider the following optimal control problem  governed by a space-fractional diffusion-advection-reaction equation:
	\begin{equation}\label{FOCPa1}
		  \min\limits_{q\in U_{ad}}J(u,q) = \frac{1}{2}\|u-u_d\|^2_{L^2(\Omega)} + \frac{\gamma}{2}\|q\|^2_{L^2(\Omega)}
	\end{equation}
	subject to
	\begin{eqnarray}\label{FOCPb1}\left\{\begin{aligned}
			\mathcal{L}_\theta^\alpha u  + \lambda_1 Du + \lambda_2 u&= f(x) + q(x),\ & x 	\in \Omega:=(0,1), \\
			u(0)&= 0,\ & x \in \partial \Omega,\quad\quad\quad\
	\end{aligned}\right.\end{eqnarray}
	where  $U_{ad}$ is an admissible set defined by
	\begin{equation}\label{admissibleset}
		U_{ad}=\{q\in L^{2}(\Omega):\int_{\Omega}q(x)dx\geq0 \},
	\end{equation}
$D$ denotes the first-order derivative with respect to $x$,  $f$ and $u_d$ are given functions, $\gamma$, $\lambda_1$ and $\lambda_2$ are constants, $\gamma>0$, $\lambda_1\neq0$, $\lambda_2\geq0$.  $\mathcal{L}_\theta^\alpha u:= -[\theta\
{}_{0}D_x^{\alpha}+(1-\theta)\ {}_{x}D_1^{\alpha}] $ is a general two-sided fractional operator with $\alpha\in (1,2)$ and $\theta\in [0,1]$.  Here ${}_{0}D_x^{\alpha}$ and ${}_{x}D_1^{\alpha}$ are left and right
Riemann-Liouville fractional derivatives \cite{samko1}, respectively, defined by
\begin{align*}
{}_{0}D_x^{\alpha}u(x)&=\frac{1}{\Gamma(2-\alpha)}\frac{d^2}{dx^2}\int_0^x\frac{u(s)}{(x-s)^{\alpha-1}}ds, \ x>0,\\
{}_{x}D_1^{\alpha}u(x)&=\frac{1}{\Gamma(2-\alpha)}\frac{d^2}{dx^2}\int_x^1\frac{u(s)}{(s-x)^{\alpha-1}}ds, \ x<1.
\end{align*}	

Due to the existence of  weak singular kernel of fractional derivatives, exact solutions of model equation \eqref{FOCPb1} and its variants usually exhibit singularities near the boundary. Even given smooth inputs, it leads to the low regularity of the problem in standard Sobolev space. In order to compensate for the weak singularities of solutions, weighted Sobolev spaces were employed in \cite{ervin2020,hao2020}. For the model equation \eqref{FOCPb1} with $\theta=1/2$, Hao and
Zhang gave sharp regularity estimates of solutions in weighted Sobolev space \cite{hao2020}. The work was extended by Ervin in \cite{ervin2020} to the general case, i.e. $\theta\in [0,1]$. Based on the regularity
results, a  spectral Petrov-Galerkin method with higher convergence order was obtained in \cite{zheng2020} for \eqref{FOCPb1}. Above works motivate us to analyze the regularity of the solution
to the optimal control problem with FDE constraints \eqref{FOCPa1}-\eqref{admissibleset} ,  and then present a spectral Petrov-Galerkin method and investigate its convergence order in weighted Sobolev space.

When taking  $\theta=1/2$, the operator {\small$\mathcal{L}_{1/2}^\alpha$} is equivalent to the \emph{integral} fractional Laplacian operator (sometimes called \emph{Riesz} fractional Laplacian). Zhang and Zhou \cite{zhanglu} considered a spectral Galerkin approximation to  optimal
control problem with \emph{integral} fractional Laplacian equation constraints. They derived the first-order optimality condition and studied the regularity of solution and the convergence order of approximation in weighted Sobolev space.
Recently Wang et.al. \cite{wang2021} considered the spectral Galerkin approximation for the optimal control problem governed by fractional diffusion-advection-reaction equations with \emph{integral} fractional Laplacian ( i.e.
\eqref{FOCPa1}-\eqref{admissibleset} for $\theta=1/2$). There are also some works on numerical methods for optimal control problems with a general two-sided space FDE constraints in standard Sobolev space. For example, a fast Gradient project method was exploited in \cite{du2016} to solve the discrete systems derived from two finite difference methods; a finite element method was considered in \cite{zhou2019}; a fast stochastic Galerkin method was developed in  \cite{du2018} when the constraint is  a random two-sided space-fractional diffusion equation.  

The aim of this work is to present an efficient spectral Petrov-Galerkin method based on the regularity analysis in weighted Soblev space for the optimal control problem \eqref{FOCPa1}-\eqref{admissibleset} with $\theta\in[0,1]$. To this end, we perform the following three steps.
\begin{enumerate}
	\item[-] Firstly, we construct the regularity of  \eqref{FOCPa1}-\eqref{admissibleset} in weighted Sobolev space.
	
	Due to the asymmetry of the fractional
operator $\mathcal{L}_\theta^\alpha$ for $\theta\neq1/2$, the regularity of state $u$ and adjoint state $z$ are analyzed in two different weighted Sobolev spaces  based on the coupled optimality system
\eqref{FOPCa3}-\eqref{FOPCc}.
To overcome this gap, the regularity connection between $(1-x)^{-\sigma}x^{-\sigma^*}u$ and $u$ is established in the weighted Sobolev space $H^{r}_{\omega^{\sigma , \sigma^*}} (\Omega)$, see Lemma \ref{solution_space},
where $\sigma$ and  $\sigma^*$ are constants defined by \eqref{sigma} depending on $\alpha$ and $\theta$.  The regularity results of  the state variable $u$, the adjoint state variable $z$ and the control variable $q$ are given in Theorem  \ref{regularity}.
\item[-]Secondly, we present a spectral Petrov-Galerkin method to \eqref{FOCPa1}-\eqref{admissibleset} and give its error estimation.

Based on the regularity results obtained in the first step,  we adopt weighted Jacobi polynomials as basis in the spectral Petrov-Galerkin method to recover accuracy of numerical solution from
the boundary singularity, and estimate the errors of approximations to  $u$ in $L^2_{\omega^{-\sigma,-\sigma^*}}$-norm, $z$ in $L^2_{\omega^{-\sigma^*,-\sigma}}$-norm and  $q$ in $L^2$-norm, see Theorem \ref{converth}.
\item[-]Finally, we provide a fast projected gradient algorithm with a quasi-linear complexity  to solve the resulting discrete system.

Compared to  the projected gradient method for solving the linear optimization system obtained from the spectral Petrov-Galerkin method,  which requires $O(N^2)$ storage and $O(N^3)$
computational complexity, see \cite{hinze2009, li2019, wang2021}, we follow the idea in \cite{hao2021} and present a fast projected gradient method with linear storage $O(N)$  and quasilinear computational cost $O(N\log^2N)$, see Algorithm \ref{alg:SA}.
\end{enumerate}

The rest of the paper is organized as follows. In Section \ref{section2}, we introduce some necessary notations and properties of Jacobi polynomials and weighted Sobolev spaces. In Section \ref{section3}, we derive the
first-order optimality condition  and study the regularity of solutions for the optimal control problem \eqref{FOCPa1}-\eqref{admissibleset}. Based on the regularity results, a spectral Petrov-Galerkin method is presented
and its error estimate is given in Section \ref{section4}. In Section \ref{section5}, a fast iteration algorithm is presented  to solve the discrete systems. Numerical examples given in this section verify our theoretical findings and show that the proposed spectral Petrov-Galerkin method with the fast algorithm is efficient.

\section{Preliminary}\label{section2}
	In this section, we will give some basic definitions and properties of  Jacobi polynomials and weighted Sobolev spaces.
	
	\subsection{Jacobi polynomials}
	For $\gamma , \beta > -1$, $n \in \mathbb{N}$ and $t \in [-1 , 1]$, $P_n^{\gamma , \beta}(t)$ is the classical Jacobi polynomial of degree $n$. Let $t = 2x-1$, then the domain of the Jacobi polynomials $P_n^{\gamma ,
\beta}(t)$ is transformed to $[0,1]$. Following this idea, we introduce
	\begin{equation*}
		Q_n^{\gamma , \beta}(x) = P_n^{\gamma , \beta}(2x-1) , \ x \in [0,1].
	\end{equation*}
	
	\begin{itemize}[leftmargin=*]
		\item \textbf{Orthogonality.} The Jacobi polynomials $Q_n^{\gamma , \beta}(x)$ are mutually orthogonal with respect to the weight $(1-x)^\gamma x^\beta$: for $\gamma, \beta > -1$,
		\begin{equation}\label{Orthogonality}
			 \int_{0}^{1} (1-x)^\gamma x^\beta Q_n^{\gamma,\beta}(x) Q_m^{\gamma,\beta}(x) dx = \delta_{mn} \| Q_n^{\gamma,\beta} \|^2_{\omega^{\gamma,\beta}},
		\end{equation}
		where $\delta_{mn}$ is the Kronecker symbol and
		\begin{equation*}
			\| Q_n^{\gamma,\beta} \|^2_{\omega^{\gamma,\beta}} = \frac{1}{2n+\gamma+\beta+1} \cdot \frac{\Gamma(n+\beta+1) \Gamma(n+\gamma+1)}{\Gamma(n+1) \Gamma(n+\gamma+\beta+1)}:=h_n^{\gamma,\beta}.
		\end{equation*}
	
		\item \textbf{Properties of Jacobi polynomials.}
		\begin{Lemma}[\cite{ervin2018}]\label{lem21}
			For the $n$-th order Jacobi polynomials $Q_n^{\sigma,\sigma^*}(x)$ and $Q_n^{\sigma^*,\sigma}(x)$, $x \in [0,1]$, it holds that
			\begin{equation}\label{ptzz}
				\mathcal{L}_\theta^\alpha \left[ (1-x)^\sigma x^{\sigma^*} Q_n^{\sigma,\sigma^*} (x) \right] = \lambda_{\theta,n}^\alpha Q_n^{\sigma^*,\sigma}(x) ,
			\end{equation}
			in which
			\begin{equation*}
				\lambda_{\theta,n}^\alpha = -\frac{\sin (\pi  \alpha)}{\sin (\pi \sigma^*) + \sin (\pi \sigma)} \frac{\Gamma(n+1+\alpha)}{\Gamma(n+1)},
			\end{equation*}
			$\sigma^* = \alpha - \sigma$ and $\sigma$ is determined by
			\begin{equation}\label{sigma}
				\theta = \frac{\sin (\pi (\alpha- \sigma))}{\sin (\pi (\alpha - \sigma)) + \sin (\pi \sigma)}.
			\end{equation}
		\end{Lemma}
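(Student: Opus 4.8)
The plan is to reduce the two-sided statement to a single one-sided fractional-derivative identity and then exploit the reflection symmetry $x\mapsto 1-x$. Writing $W_n(x):=(1-x)^\sigma x^{\sigma^*}Q_n^{\sigma,\sigma^*}(x)$, I would first establish the left-sided formula
\[
{}_0D_x^\alpha W_n(x) = \frac{\Gamma(n+1+\alpha)}{\Gamma(n+1)}\Big[\cos(\pi\sigma)\,Q_n^{\sigma^*,\sigma}(x) + \sin(\pi\sigma)\,\Phi_n(x)\Big],
\]
where $Q_n^{\sigma^*,\sigma}$ is the ``good'' polynomial term and $\Phi_n$ is a remainder carrying the endpoint singularity (behaving like $x^{-\sigma}$ near $x=0$, since the lowest power of $W_n$ is $x^{\sigma^*}$ and $\sigma^*-\alpha=-\sigma$). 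To get this I would expand $(1-x)^\sigma$ by the binomial series and write $Q_n^{\sigma,\sigma^*}$ as a polynomial in $x$, so that $W_n(x)=\sum_{k\ge0}w_k x^{\sigma^*+k}$, apply the power rule ${}_0D_x^\alpha x^{\mu}=\frac{\Gamma(\mu+1)}{\Gamma(\mu+1-\alpha)}x^{\mu-\alpha}$ term by term (legitimate for $\alpha\in(1,2)$ on $(0,1)$), and resum the resulting ${}_2F_1$-type series. The trigonometric factors $\cos(\pi\sigma),\sin(\pi\sigma)$ emerge here from the reflection formula $\Gamma(z)\Gamma(1-z)=\pi/\sin(\pi z)$ applied to the Gamma ratios at the noninteger shift $\sigma^*-\alpha=-\sigma$.

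Then I would obtain the right-sided derivative essentially for free. Under $x\mapsto 1-x$ the operators ${}_0D_x^\alpha$ and ${}_xD_1^\alpha$ are interchanged, and $W_n(1-x)=(-1)^n(1-x)^{\sigma^*}x^{\sigma}Q_n^{\sigma^*,\sigma}(x)$ is the ``swapped'' generalized Jacobi function, so applying the left-sided formula with $\sigma$ and $\sigma^*$ exchanged and reflecting back gives
\[
{}_xD_1^\alpha W_n(x)=\frac{\Gamma(n+1+\alpha)}{\Gamma(n+1)}\Big[\cos(\pi\sigma^*)\,Q_n^{\sigma^*,\sigma}(x)-\sin(\pi\sigma^*)\,\Phi_n(x)\Big],
\]
with the \emph{same} remainder $\Phi_n$ up to the sign forced by the orientation reversal.

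Combining the two, $\mathcal{L}_\theta^\alpha W_n=-[\theta\,{}_0D_x^\alpha+(1-\theta)\,{}_xD_1^\alpha]W_n$ has a coefficient of $\Phi_n$ proportional to $\theta\sin(\pi\sigma)-(1-\theta)\sin(\pi\sigma^*)$, which vanishes precisely when $\theta=\frac{\sin(\pi\sigma^*)}{\sin(\pi\sigma)+\sin(\pi\sigma^*)}$ --- exactly the defining relation \eqref{sigma} with $\sigma^*=\alpha-\sigma$. With the singular remainder thus eliminated, the surviving coefficient of $Q_n^{\sigma^*,\sigma}$ is $-\frac{\Gamma(n+1+\alpha)}{\Gamma(n+1)}\big[\theta\cos(\pi\sigma)+(1-\theta)\cos(\pi\sigma^*)\big]$, and substituting this $\theta$ turns the bracket into $\frac{\sin(\pi\sigma^*)\cos(\pi\sigma)+\sin(\pi\sigma)\cos(\pi\sigma^*)}{\sin(\pi\sigma)+\sin(\pi\sigma^*)}=\frac{\sin(\pi\alpha)}{\sin(\pi\sigma)+\sin(\pi\sigma^*)}$. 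Hence the coefficient collapses to $\lambda_{\theta,n}^\alpha=-\frac{\sin(\pi\alpha)}{\sin(\pi\sigma)+\sin(\pi\sigma^*)}\frac{\Gamma(n+1+\alpha)}{\Gamma(n+1)}$, as claimed.

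The main obstacle is the one-sided identity: producing the precise split into the polynomial $Q_n^{\sigma^*,\sigma}$ plus the remainder, and verifying that the two one-sided remainders coincide up to sign so that they genuinely cancel. This is where the hypergeometric resummation and the careful tracking of the Gamma-reflection sines live; once that bookkeeping is settled, the cancellation of $\Phi_n$ and the final trigonometric simplification are routine. If one prefers to avoid exhibiting $\Phi_n$ explicitly, an alternative is to verify \eqref{ptzz} indirectly by pairing both sides against the orthogonal family $\{Q_m^{\sigma^*,\sigma}\}$ with weight $(1-x)^{\sigma^*}x^{\sigma}$ and using the integration-by-parts relation $\int {}_0D_x^\alpha u\,v=\int u\,{}_xD_1^\alpha v$ to move the fractional operator onto the smooth test polynomials, reducing everything to scalar products computable from the power rule; care is then needed to control the boundary contributions coming from the endpoint weights.
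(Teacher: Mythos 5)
The paper does not prove Lemma \ref{lem21}: it is imported verbatim from the cited reference \cite{ervin2018} (Ervin--Heuer--Roop), so your proposal can only be compared against that source's argument. At that level your plan is essentially the same argument: compute the two one-sided Riemann--Liouville derivatives of $W_n=\omega^{\sigma,\sigma^*}Q_n^{\sigma,\sigma^*}$ by power-rule expansion and hypergeometric resummation, observe that each is a Jacobi polynomial plus a non-polynomial term whose trigonometric coefficients come from the Gamma reflection formula, and check that \eqref{sigma} is exactly the condition under which the non-polynomial terms cancel in $\mathcal{L}_\theta^\alpha$. Your endgame algebra is correct: the cancellation condition $\theta\sin(\pi\sigma)=(1-\theta)\sin(\pi\sigma^*)$ is equivalent to \eqref{sigma}; the identity $\theta\cos(\pi\sigma)+(1-\theta)\cos(\pi\sigma^*)=\sin(\pi\alpha)/[\sin(\pi\sigma)+\sin(\pi\sigma^*)]$ then yields $\lambda_{\theta,n}^\alpha$; and the reflection facts ($x\mapsto 1-x$ swaps ${}_{0}D_x^{\alpha}$ with ${}_{x}D_1^{\alpha}$, and $W_n(1-x)=(-1)^n(1-x)^{\sigma^*}x^{\sigma}Q_n^{\sigma^*,\sigma}(x)$) are right.

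The gap is that the step you defer as ``bookkeeping'' is the lemma itself, not a preliminary to it. If you define $\Phi_n$ by your left-sided formula (possible since $\sin(\pi\sigma)\neq0$ for $\theta\neq1$), then the claim that the \emph{same} $\Phi_n$ appears on the right with coefficient $-\sin(\pi\sigma^*)$ is just a rescaled restatement of \eqref{ptzz}: it says precisely that
\begin{equation*}
\sin(\pi\sigma^*)\,{}_{0}D_x^{\alpha}W_n+\sin(\pi\sigma)\,{}_{x}D_1^{\alpha}W_n=\sin(\pi\alpha)\,\frac{\Gamma(n+1+\alpha)}{\Gamma(n+1)}\,Q_n^{\sigma^*,\sigma}.
\end{equation*}
So the decomposition buys nothing until the one-sided formulas are proved with an explicitly identified common remainder, which is where all the hypergeometric work lives. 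Two features make that matching harder than your sketch suggests. First, each one-sided derivative is singular at \emph{both} endpoints, not only the near one: already for $n=0$ one finds ${}_{0}D_x^{\alpha}W_0=\Gamma(1+\alpha)\frac{\sin(\pi\alpha)}{\sin(\pi\sigma^*)}+\frac{\Gamma(\sigma^*)}{\Gamma(-\sigma)}\,x^{-\sigma}(1-x)^{-\sigma^*}\,{}_2F_1(1,-\alpha;1-\sigma^*;1-x)$, which blows up like $x^{-\sigma}$ at $0$ and like $(1-x)^{-\sigma^*}$ at $1$. Second, with your $\cos/\sin$ normalization $\Phi_n$ cannot be purely singular (at $n=0$ it must absorb the constant $\cot(\pi\sigma^*)$), so the left and right remainders cannot be identified by comparing singular behavior alone; one genuinely needs the ${}_2F_1$ connection formulas --- for $n=0$ the point is that ${}_2F_1(1,-\alpha;1-\sigma;x)$, ${}_2F_1(1,-\alpha;1-\sigma^*;1-x)$ and $x^{\sigma}(1-x)^{\sigma^*}$ solve the same hypergeometric equation, hence are linearly dependent, and the connection coefficients must be computed. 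Finally, your duality fallback is circular rather than delicate: Lemma \ref{change_order} moves the operator onto the weighted test functions $\omega^{\sigma^*,\sigma}Q_m^{\sigma^*,\sigma}$ (the weight is forced, since the adjoint identity needs functions vanishing at the boundary), and evaluating $\mathcal{L}_{1-\theta}^{\alpha}\bigl(\omega^{\sigma^*,\sigma}Q_m^{\sigma^*,\sigma}\bigr)$ is exactly the statement of the lemma again with $(\sigma,\sigma^*,\theta)$ replaced by $(\sigma^*,\sigma,1-\theta)$.
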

		
		\begin{Remark}\label{remark2.2}	
			To ensure \eqref{sigma} uniquely solvable, we constrain $\sigma, \sigma^* \in (0,1]$. Note that in Lemma \ref{lem21}, $\sigma = \sigma^* = \alpha/2$ for $\theta = 1/2$ and $\sigma = 1, \sigma^* = \alpha-1$ for $\theta = 1$.
		\end{Remark}
		
		\begin{Lemma}[\cite{ervin2020}]
			For  Jacobi polynomials $Q_n^{\gamma , \beta}(x)$ with $\gamma , \beta > -1$, it holds that
			\begin{equation*}
				D^k \left[ Q_n^{\gamma , \beta}(x) \right] = \frac{\Gamma(n+k+\gamma+\beta+1)}{n+\gamma+\beta+1} Q_{n-k}^{\gamma+k , \beta+k}(x)
			\end{equation*}
			and
			\begin{equation}\label{ind}
				D^k \left[ (1-x)^{\gamma+k} x^{\beta+k} Q_{n-k}^{\gamma+k , \beta+k}(x) \right] = \frac{(-1)^k n!}{(n-k)!} (1-x)^\gamma x^\beta Q_{n}^{\gamma , \beta}(x) ,
			\end{equation}
			in which $D^k$ denotes $\frac{d^k}{dx^k}$ and $k = 1 , 2 , \cdots$.
		\end{Lemma}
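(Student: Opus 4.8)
The plan is to recognize both identities as the images, under an affine change of variable, of classical differentiation formulas for the Jacobi polynomials $P_n^{\gamma,\beta}$ on $[-1,1]$. Writing $t=2x-1$, which maps $[0,1]$ onto $[-1,1]$, one has $1-t=2(1-x)$, $1+t=2x$ and $\frac{d}{dx}=2\frac{d}{dt}$, so that $Q_n^{\gamma,\beta}(x)=P_n^{\gamma,\beta}(t)$ and $D^k=2^k\frac{d^k}{dt^k}$. The whole proof then amounts to transporting two standard relations through this substitution and checking that the accompanying powers of $2$ cancel.

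For the first identity I would start from the classical $k$-th order derivative rule
\[
\frac{d^k}{dt^k}P_n^{\gamma,\beta}(t)=\frac{\Gamma(n+k+\gamma+\beta+1)}{2^k\,\Gamma(n+\gamma+\beta+1)}\,P_{n-k}^{\gamma+k,\beta+k}(t),
\]
and apply $D^k=2^k\frac{d^k}{dt^k}$: the factor $2^{-k}$ is absorbed by $2^k$, leaving exactly the ratio of Gamma functions $\Gamma(n+k+\gamma+\beta+1)/\Gamma(n+\gamma+\beta+1)$ times $Q_{n-k}^{\gamma+k,\beta+k}(x)$ (so the denominator in the stated formula is read as $\Gamma(n+\gamma+\beta+1)$, which for $k=1$ collapses to the single-step factor $n+\gamma+\beta+1$). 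An equivalent, self-contained route is induction on $k$: the base case $D\,Q_n^{\gamma,\beta}=(n+\gamma+\beta+1)Q_{n-1}^{\gamma+1,\beta+1}$ is the shifted single-derivative rule, and in the inductive step applying this base case to $Q_{n-k}^{\gamma+k,\beta+k}$ produces the factor $(n+\gamma+\beta+k+1)$, which telescopes with $\Gamma(n+k+\gamma+\beta+1)$ into $\Gamma(n+(k+1)+\gamma+\beta+1)$.

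For the weighted identity \eqref{ind} I would use the Rodrigues representation $(1-t)^\gamma(1+t)^\beta P_n^{\gamma,\beta}(t)=\frac{(-1)^n}{2^n n!}\frac{d^n}{dt^n}\big[(1-t)^{n+\gamma}(1+t)^{n+\beta}\big]$. Writing the same formula at order $n-k$ for $P_{n-k}^{\gamma+k,\beta+k}$ and differentiating $k$ further times reproduces the order-$n$ Rodrigues expression, yielding
\[
\frac{d^k}{dt^k}\Big[(1-t)^{\gamma+k}(1+t)^{\beta+k}P_{n-k}^{\gamma+k,\beta+k}(t)\Big]=\frac{(-1)^k 2^k\, n!}{(n-k)!}(1-t)^\gamma(1+t)^\beta P_n^{\gamma,\beta}(t).
\]
Transporting this to $[0,1]$, the weight on the left becomes $(1-x)^{\gamma+k}x^{\beta+k}=2^{-(\gamma+\beta+2k)}(1-t)^{\gamma+k}(1+t)^{\beta+k}$, the operator $D^k$ and the constant prefactor each contribute a factor $2^k$, and the weight on the right satisfies $(1-t)^\gamma(1+t)^\beta=2^{\gamma+\beta}(1-x)^\gamma x^\beta$; collecting the exponents $-(\gamma+\beta+2k)+k+k+\gamma+\beta=0$ shows that every power of $2$ cancels, leaving precisely $\frac{(-1)^k n!}{(n-k)!}(1-x)^\gamma x^\beta Q_n^{\gamma,\beta}(x)$. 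As before, an induction on $k$ is available, with base case $D\big[(1-x)^{\gamma+1}x^{\beta+1}Q_{n-1}^{\gamma+1,\beta+1}\big]=-n(1-x)^\gamma x^\beta Q_n^{\gamma,\beta}$ and the factorial telescoping $(n-k)\,n!/(n-k)!=n!/(n-k-1)!$.

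The computations are essentially bookkeeping, so the one place that genuinely deserves care is the provenance of the two base relations, and especially of the weighted relation in the second display: its cleanest justification is the Rodrigues formula, and one must keep the sign $(-1)^k$ and the constant $2^k$ correct through the repeated differentiation. Everything else — the cancellation of the powers of $2$ under $t=2x-1$ and the telescoping of the $\Gamma$- and factorial-ratios in the induction — is routine once those base cases are in hand.
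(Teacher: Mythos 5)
The paper does not prove this lemma at all: it is quoted verbatim from the reference [ervin2020], so there is no internal proof to compare against. Your argument is a correct, self-contained justification, and it follows the standard route one would expect to find in that reference: pull back the classical derivative formula and the Rodrigues formula for $P_n^{\gamma,\beta}$ on $[-1,1]$ through $t=2x-1$, and check that the powers of $2$ from the weights ($1-t=2(1-x)$, $1+t=2x$), from $D^k=2^k\frac{d^k}{dt^k}$, and from the constants in the classical identities cancel; your exponent bookkeeping $-(\gamma+\beta+2k)+k+k+\gamma+\beta=0$ is exactly right, as is the sign computation $(-1)^{n-k}(-1)^n=(-1)^k$ hidden in your Rodrigues step. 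Two points deserve explicit credit. First, you correctly flagged that the first displayed formula in the paper contains a typo: as printed, the denominator $n+\gamma+\beta+1$ is inconsistent (already at $k=1$ it would give $\Gamma(n+\gamma+\beta+1)\,Q_{n-1}^{\gamma+1,\beta+1}$ rather than $(n+\gamma+\beta+1)\,Q_{n-1}^{\gamma+1,\beta+1}$), and the correct reading is $\Gamma(n+\gamma+\beta+1)$, matching the source. Second, your induction alternatives are sound: the telescoping $(n+\gamma+\beta+k+1)\,\Gamma(n+k+\gamma+\beta+1)=\Gamma(n+k+1+\gamma+\beta+1)$ for the first identity and $(n-k)\,n!/(n-k)!=n!/(n-k-1)!$ for the second are exactly what makes the inductive steps close. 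No gaps.
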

		
	\end{itemize}
	
	\subsection{Weighted Sobolev spaces}
	\begin{itemize}[leftmargin=*]
		\item $\bm{L^2_{\omega^{\gamma,\beta}}(\Omega)}$. Denote $\omega^{\gamma,\beta}(x) = (1-x)^\gamma x^\beta, \ \gamma, \beta > -1$. Then
		\begin{equation*}
			L^2_{\omega^{\gamma,\beta}}(\Omega) = \{ v: \int_{\Omega} \omega^{\gamma,\beta}(x) v^2(x) dx < \infty  \}
		\end{equation*}
		equipped with the following inner product and norm
		\begin{equation*}
			(u,v)_{\omega^{\gamma,\beta}} = \int_{\Omega}\omega^{\gamma,\beta} uv  dx \ , \quad \| u \|_{\omega^{\gamma,\beta}} = \sqrt{ (u,u)_{\omega^{\gamma,\beta}} }.
		\end{equation*}
		
		\item \bm{$H^s_{\omega^{\gamma,\beta}} (\Omega)$}. The weighted Sobolev space with non-negative integer $s$ is defined as (see \cite{Babuska2002,guo1})
		\begin{equation*}
			H^s_{\omega^{\gamma,\beta}} (\Omega) = \left\{ v : D^k v(x) \in L^2_{\omega^{\gamma+k,\beta+k}} (\Omega) , k = 0,1,\cdots ,s \right\} ,
		\end{equation*}
		equipped with the norm
		\begin{equation*}
			\| v \|_{H^s_{\omega^{\gamma,\beta}}} = ( \sum_{k=0}^{s} |v|^2_{H^k_{\omega^{\gamma,\beta}} } )^{1/2} \ , \ \ |v|_{H^k_{\omega^{\gamma,\beta}} } = \| D^kv
\|_{\omega^{\gamma+k,\beta+k}} .
		\end{equation*}
		For $s \in \mathbb{R}^+$, $H^s_{\omega^{\gamma,\beta}} (\Omega)$ can be defined by interpolation via the K-method \cite{adams1975}. For $s<0$, it is defined by the (weighted) $L^2$ duality.
		
	
		\item {\bm{$H_{(\eta)}^s(J)$}. }  Let $\mathbb{N}_0 = \mathbb{N} \cup \{ 0 \}$ and $J=(0,\frac{3}{4})$, for $s\geq0$, $s=\lfloor s\rfloor+\nu$, $0\leq \nu <1$, where $\lfloor s\rfloor$ is the integer part of $s$,
		\begin{equation*}
			H_{(\eta)}^s(J)=\{v : v(x)\ \mbox{is measurable and}\ \|v\|_{H_{(    \eta    )}^s(J)}<\infty\},
		\end{equation*}
		in which the norm $\|\cdot\|_{H_{(\eta)}^s(J)}$ is defined by
		\begin{align*}
			\|v\|^2_{H^s_{(\eta)}(J)} =
			\begin{cases}
				\sum\limits_{k=0}^{\lfloor s \rfloor} \| D^kv \|^2_{L^2_{(\eta+k)}(J)}, &s \in \mathbb{N}_0 ,\\[5mm]
				\sum\limits_{k=0}^{\lfloor s \rfloor} \|D^kv\|^2_{L^2_{(\eta+k)} (J)} + |v|^2_{H^s_{(\eta)}(J)}, &s\in \mathbb{R}^{+}\backslash \mathbb{N}_0,
			\end{cases}
		\end{align*}
		and
		\begin{equation*}
			\|v\|_{L^2_{(\eta)}(J) }^2 = \int_J x^\eta v^2(x) dx\ ,\quad
			|v|_{H^s_{(\eta)}(J)}^2 = \iint_{\Lambda^*}x^{\eta+s} \frac{ \left| D^{\lfloor s \rfloor} v(x) - D^{\lfloor s\rfloor} v(y) \right|^2 }{ \left| x-y \right|^{1+2\nu}} dxdy
		\end{equation*}
		where
		\begin{equation*}
				\Lambda^* = \big\{  (x,y): \frac{2}{3}x < y < \frac{3}{2}x , \  0 < x < \frac{1}{2}  \big\}   \ \cup \ \big\{ (x,y) : \frac{3}{2}x - \frac{1}{2} < y < \frac{2}{3}x + \frac{1}{3} , \  \frac{1}{2} \leq
x<\frac{3}{4} \big\}.
		\end{equation*}

	\end{itemize}
		By the definition of $H^s_{\omega^{\gamma,\beta}}(\Omega)$ and $ H^s_{(\eta)}(J)$, the following result can be readily obtained.
		\begin{Lemma}[\cite{ervin2020,li2020}]\label{lemma2.3}
			A function $v \in H^s_{\omega^{\gamma,\beta}} (\Omega)$ if and only if $v \in H^s_{(\beta)} (J)$ and $\hat{v} \in H^s_{(\gamma)} (J)$, where $\hat{v}(x) := v(1-x)$.
		\end{Lemma}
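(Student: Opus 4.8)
The plan is to establish the two-sided norm equivalence
\begin{equation*}
c\,\|v\|_{H^s_{\omega^{\gamma,\beta}}(\Omega)} \le \|v\|_{H^s_{(\beta)}(J)} + \|\hat v\|_{H^s_{(\gamma)}(J)} \le C\,\|v\|_{H^s_{\omega^{\gamma,\beta}}(\Omega)},
\end{equation*}
from which the stated ``if and only if'' follows at once. The guiding idea is localization together with reflection: the two-sided weight $\omega^{\gamma,\beta}(x)=(1-x)^\gamma x^\beta$ is singular only at the two endpoints, so I would cover $\Omega=(0,1)$ by the overlapping subintervals $\Omega_L=(0,\tfrac34)=J$ and $\Omega_R=(\tfrac14,1)$. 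On $\Omega_L$ the factor $(1-x)^{\gamma+k}$ is bounded above and below by positive constants (because $1-x\in(\tfrac14,1)$ and $\gamma+k>-1$), so only the $x^{\beta+k}$ singularity at $0$ survives, which is exactly the weight of $H^s_{(\beta)}(J)$; on $\Omega_R$ the factor $x^{\beta+k}$ is comparable to a constant, so only the $(1-x)^{\gamma+k}$ singularity at $1$ survives, and the reflection $\hat v(x)=v(1-x)$, which maps $\Omega_R$ onto $J$, converts it into the $x^{\gamma+k}$ singularity at $0$ measured by $H^s_{(\gamma)}(J)$. The overlap $(\tfrac14,\tfrac34)$ is interior and non-singular, so nothing is lost.

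For integer $s$ this is carried out by direct computation. Expanding $\|D^kv\|^2_{\omega^{\gamma+k,\beta+k}}=\int_0^1(1-x)^{\gamma+k}x^{\beta+k}|D^kv|^2\,dx$ and using $\int_0^1\le\int_0^{3/4}+\int_{1/4}^1$ for the upper bound (the overlap is merely counted twice), together with the two weight comparabilities above, gives $\|D^kv\|^2_{\omega^{\gamma+k,\beta+k}}\le C(\|D^kv\|^2_{L^2_{(\beta+k)}(J)}+\|D^k\hat v\|^2_{L^2_{(\gamma+k)}(J)})$; the second term arises from the change of variables $y=1-x$ combined with $D^k\hat v(x)=(-1)^k(D^kv)(1-x)$. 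The reverse inequality is obtained identically, bounding each one-sided integral from above by the full weighted integral via the positive lower bounds for the ``inactive'' factor. Summing over $k=0,\dots,s$ yields the equivalence for $s\in\mathbb N_0$.

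The fractional case $s=\lfloor s\rfloor+\nu$ with $0<\nu<1$ is the genuine obstacle, because $H^s_{\omega^{\gamma,\beta}}(\Omega)$ is defined by $K$-interpolation while $H^s_{(\eta)}(J)$ is defined by the explicit Gagliardo seminorm over $\Lambda^*$. My plan is to route through interpolation by realizing $H^s_{\omega^{\gamma,\beta}}(\Omega)$ as a retract of the product space. The decomposition map $R:v\mapsto(v|_J,\hat v|_J)$ is bounded from $H^m_{\omega^{\gamma,\beta}}(\Omega)$ into $H^m_{(\beta)}(J)\times H^m_{(\gamma)}(J)$ for both $m=\lfloor s\rfloor$ and $m=\lfloor s\rfloor+1$ by the previous paragraph. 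Fixing a smooth partition of unity $\chi_L+\chi_R\equiv 1$ subordinate to $\{\Omega_L,\Omega_R\}$, the gluing operator $E(f,g)(x)=\chi_L(x)f(x)+\chi_R(x)g(1-x)$ (each cutoff annihilating its factor off its subinterval) is bounded at both integer endpoints and satisfies $E\circ R=\mathrm{id}$ on $H^m_{\omega^{\gamma,\beta}}(\Omega)$. By the $K$-method $R$ and $E$ remain bounded at the intermediate index $s$, so the retract identity upgrades to the norm equivalence at level $s$, \emph{provided} the two interpolation scales on $J$ coincide with the explicitly defined ones.

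This last point is the hardest ingredient, and I would import it from \cite{li2020,ervin2020}: the identification of $H^s_{(\eta)}(J)$ with $[H^{\lfloor s\rfloor}_{(\eta)}(J),H^{\lfloor s\rfloor+1}_{(\eta)}(J)]_\nu$ up to equivalent norms. The crux is that the seminorm $|v|_{H^s_{(\eta)}(J)}$, built from the single weight $x^{\eta+s}$ over the near-diagonal set $\Lambda^*$, faithfully reproduces the fractional smoothness encoded by the $K$-functional. Here $\Lambda^*$ is engineered precisely so that $x$ and $y$ are comparable on it (the constraints $\tfrac23 x<y<\tfrac32 x$, and their right-endpoint analogue, force $x\sim y$), which makes $x^{\eta+s}$, $y^{\eta+s}$ and $\big(\tfrac{x+y}{2}\big)^{\eta+s}$ mutually comparable and lets one pass between the intrinsic difference-quotient seminorm and the interpolation norm without the endpoint weight interfering. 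Checking this equivalence, and checking that reflection carries $\Lambda^*$ onto the matching near-diagonal region at the right endpoint so that $|\hat v|_{H^s_{(\gamma)}(J)}$ genuinely measures $v$ near $x=1$, is the main technical step; once it is in place, combining it with the interpolation of $R$ and $E$ and with the integer-order equivalence completes the characterization for all $s\ge 0$.
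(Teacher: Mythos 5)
The paper contains no proof of this lemma to compare against: it is quoted from \cite{ervin2020,li2020} with only the remark that it ``can be readily obtained'' from the two definitions. Your reconstruction is sound and in fact supplies the content that remark leaves implicit. The integer case---splitting $\int_0^1$ over $(0,\tfrac34)$ and $(\tfrac14,1)$, using that the ``inactive'' weight factor is bounded above and below away from its singular endpoint, and reflecting via $D^k\hat v(x)=(-1)^k(D^kv)(1-x)$---is complete and elementary. For fractional $s$, your retract argument ($Rv=(v|_J,\hat v|_J)$, $E(f,g)=\chi_L f+\chi_R\, g(1-\cdot)$, $E\circ R=\mathrm{id}$, both maps bounded at the two integer endpoints, hence at level $s$ by interpolation, with the $K$-functor factoring coordinatewise on the product space) is the standard and correct way to transfer the equivalence; note also that the membership ``if'' direction does follow, since $Rv$ in the interpolated product space gives $v=E(Rv)\in H^s_{\omega^{\gamma,\beta}}(\Omega)$. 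The one place the proof is not self-contained is exactly the step you flag: the identification of the explicitly defined $H^s_{(\eta)}(J)$ (Gagliardo seminorm with weight $x^{\eta+s}$ over $\Lambda^*$) with the $K$-interpolation space $[H^{\lfloor s\rfloor}_{(\eta)}(J),H^{\lfloor s\rfloor+1}_{(\eta)}(J)]_\nu$. That identification is a Grisvard-type theorem on interpolation of power-weighted Sobolev spaces---the comparability $x\sim y$ forced by $\Lambda^*$ is precisely what makes it work---and it is the ingredient the cited references themselves rest on. Importing it is legitimate and not circular: it is a one-endpoint statement on $J$, whereas the lemma being proved is the two-endpoint decomposition, so your argument genuinely reduces the latter to the former. (One small simplification: your final worry about reflection carrying $\Lambda^*$ to a matching region near $x=1$ is moot, since in the retract framework the seminorm of $\hat v$ is simply evaluated on $J$ by definition; no geometric matching is needed.) With the Grisvard-type identification granted, your proof is complete, and it buys a self-contained, checkable route where the paper offers only a citation.
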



\section{The optimal control problem}\label{section3}
	In this section, we derive the first-order optimality condition and analyze the regularity of the optimal control problem (\ref{FOCPa1})-(\ref{admissibleset}) in weighted Sobolev space.
We define the bilinear form
$A:H_0^{\frac{\alpha}{2}}(\Omega)\times H_0^{\frac{\alpha}{2}}(\Omega)\rightarrow \mathbb{R}$ as
\begin{equation*}
A(u,v):=\theta({}_{0}D_x^{\frac{\alpha}{2}}u,{}_{x}D_1^{\frac{\alpha}{2}}v)+(1-\theta)({}_{x}D_1^{\frac{\alpha}{2}}u,{}_{0}D_x^{\frac{\alpha}{2}}v)+\lambda_1( Du,v)+\lambda_2(u,v).
\end{equation*}
 Then the weak formulation of state equation \eqref{FOCPa1} is defined as
\begin{equation*}
A(u,v)=( f+q,v), \ \forall v\in H_0^{\frac{\alpha}{2}}(\Omega),
\end{equation*}
from \cite{ervin2006}, which admits a unique solution $u\in H_0^{\frac{\alpha}{2}}(\Omega)$.
Therefore, we can formulate the optimal control problem (\ref{FOCPa1})-(\ref{admissibleset}) as:
\begin{equation}\label{FOCPa4}
		  \min\limits_{q\in U_{ad}}J(u,q) = \frac{1}{2}\|u-u_d\|^2_{L^2(\Omega)} + \frac{\gamma}{2}\|q\|^2_{L^2(\Omega)}
	\end{equation}
	subject to
	\begin{eqnarray}\label{FOCPb4}
	A(u,v)=( f+q,v), \ \forall v\in H_0^{\frac{\alpha}{2}}(\Omega).		
	\end{eqnarray}

 Let $\hat{J}(q):=J(u(q),q)$. With this notation, the optimal control problem (\ref{FOCPa4})-(\ref{FOCPb4}) can be written as a reduced optimization problem:
\begin{equation*}
\min\limits_{q\in U_{ad}}\hat{J}(q).
\end{equation*}
Note that the admissible set $U_{ad}$ is closed and convex, and cost functional $\hat{J}$ is strictly convex, then it admits a unique solution by a routine argument \cite{lions,hinze}. The existence and uniqueness of the solution to the optimal control problem  follows this result.

\subsection{The first-order optimality condition}
 \begin{Lemma}[\cite{ervin2006}]\label{change_order}
		 Suppose that $u$, $v\in L^2(\Omega)$ and vanish on boundary $\partial\Omega$, then it holds that
		\begin{equation*}
			( \mathcal{L}_\theta^\alpha u , v ) = ( u ,  \mathcal{L}_{1-\theta}^\alpha v ).
		\end{equation*}
	\end{Lemma}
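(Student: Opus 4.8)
The plan is to reduce this two-sided identity to the single cross-adjoint relation between the left and right Riemann--Liouville derivatives,
\begin{equation}\label{crossadj}
({}_{0}D_x^{\alpha}u,v)=(u,{}_{x}D_1^{\alpha}v),\qquad ({}_{x}D_1^{\alpha}u,v)=(u,{}_{0}D_x^{\alpha}v),
\end{equation}
and then simply to substitute into the definition of $\mathcal{L}_\theta^\alpha$. Indeed, once \eqref{crossadj} is available, writing $\mathcal{L}_\theta^\alpha u=-[\theta\,{}_{0}D_x^{\alpha}u+(1-\theta)\,{}_{x}D_1^{\alpha}u]$ and moving each operator onto $v$ gives $(\mathcal{L}_\theta^\alpha u,v)=-\theta(u,{}_{x}D_1^{\alpha}v)-(1-\theta)(u,{}_{0}D_x^{\alpha}v)=(u,\mathcal{L}_{1-\theta}^\alpha v)$, because the coefficients of ${}_{0}D_x^{\alpha}$ and ${}_{x}D_1^{\alpha}$ get interchanged, which is precisely the definition of $\mathcal{L}_{1-\theta}^\alpha$. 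Thus the whole content of the lemma is the pair of relations in \eqref{crossadj}.

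To prove \eqref{crossadj} I would use two standard building blocks, both of the type established in \cite{ervin2006}. Let ${}_{0}I_x^{\beta}$ and ${}_{x}I_1^{\beta}$ denote the left and right Riemann--Liouville fractional integrals of order $\beta>0$. First, Fubini's theorem applied to the double integral over the triangle $\{0<s<x<1\}$ gives the adjointness of the fractional integrals, with no boundary contribution,
\begin{equation*}
({}_{0}I_x^{\beta}f,g)=(f,{}_{x}I_1^{\beta}g).
\end{equation*}
Second, I would use the half-order splitting identity valid for functions vanishing on $\partial\Omega$,
\begin{equation}\label{split}
({}_{0}D_x^{\alpha}u,v)=({}_{0}D_x^{\alpha/2}u,{}_{x}D_1^{\alpha/2}v),
\end{equation}
obtained by writing ${}_{0}D_x^{\alpha}=D^2\,{}_{0}I_x^{2-\alpha}$, splitting $2-\alpha=(1-\tfrac{\alpha}{2})+(1-\tfrac{\alpha}{2})$, transferring one half-order integral onto $v$ through the Fubini identity, and integrating by parts once; the surviving endpoint term vanishes because $u$ and $v$ vanish at the endpoints. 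By the mirror computation, $(u,{}_{x}D_1^{\alpha}v)$ equals the same symmetric expression $({}_{0}D_x^{\alpha/2}u,{}_{x}D_1^{\alpha/2}v)$, which yields the first identity in \eqref{crossadj}; the second follows by exchanging the roles of the left and right operators.

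The main obstacle is the rigorous treatment of the boundary terms together with the low regularity assumed in the statement ($u,v\in L^2(\Omega)$). The integration-by-parts steps above are classical only for sufficiently smooth functions, and the fractional integrals ${}_{0}I_x^{2-\alpha}u$ and ${}_{x}I_1^{2-\alpha}v$ need not vanish at both endpoints, so one must check that exactly the surviving endpoint contributions are killed by the conditions $u|_{\partial\Omega}=v|_{\partial\Omega}=0$. I would handle this by first establishing \eqref{split} for $u,v\in C_0^\infty(\Omega)$, where every fractional integral and each of its derivatives is classical and all endpoint terms vanish, and then extending to the stated class by density: both sides of \eqref{crossadj} are continuous bilinear forms on the fractional-order energy space $H_0^{\frac{\alpha}{2}}(\Omega)$, in which $C_0^\infty(\Omega)$ is dense, and for $u,v\in L^2(\Omega)$ with zero boundary values the fractional derivatives are read in the associated weak sense, so passing to the limit preserves the identity.
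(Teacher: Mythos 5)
Your proposal is correct: the paper does not actually prove Lemma \ref{change_order}, it simply quotes it from \cite{ervin2006}, and your argument---reducing the identity to the cross-adjointness $({}_{0}D_x^{\alpha}u,v)=(u,{}_{x}D_1^{\alpha}v)$ via the Fubini adjointness of fractional integrals, the half-order splitting $({}_{0}D_x^{\alpha}u,v)=({}_{0}D_x^{\alpha/2}u,{}_{x}D_1^{\alpha/2}v)$, and a density argument in $H_0^{\frac{\alpha}{2}}(\Omega)$ to handle the low regularity---is exactly the standard route of that cited reference. The final coefficient swap $\theta\leftrightarrow 1-\theta$ by direct substitution into the definition of $\mathcal{L}_\theta^\alpha$ is the same trivial algebra the paper implicitly relies on, so your proof fills in the details the paper delegates to the citation, with no gap.
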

	\begin{Theorem}\label{R1}
		Suppose that $q\in U_{ad}$ is an optimal control for the problem (\ref{FOCPa1})-(\ref{admissibleset}) and $u$ is the associated state variable. Then there exists an adjoint state variable $z$, such that $(u,z,q)$ satisfies the following optimality conditions
			\begin{align}\label{FOPCa3}
				\begin{cases}
					\mathcal{L}_\theta^\alpha u + \lambda_1 Du + 	\lambda_2 u = f(x)+q(x), &x\in \Omega, \\[2mm]
					u(x) = 0, &x \in \partial \Omega ,
				\end{cases}
			\end{align}
			\begin{align}\label{FOPCb3}
				\begin{cases}
					\mathcal{L}_{1-\theta}^\alpha z - \lambda_1 Dz + \lambda_2 z =u(x)-u_d(x), &x\in \Omega, \\[2mm]
					z(x) = 0, &x \in \partial \Omega ,
				\end{cases}
			\end{align}
			and the variational inequality
			\begin{equation}\label{FOPCc}
				\int_\Omega (\gamma q+z)(v-q) dx \geq 0, \ \  	v\in U_{ad}.
			\end{equation}
	\end{Theorem}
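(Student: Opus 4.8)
The plan is to obtain the system by writing the first-order necessary condition for minimizing the reduced functional $\hat{J}$ over the convex set $U_{ad}$ and introducing an adjoint state to make the derivative of $\hat{J}$ explicit. Equation \eqref{FOPCa3} is nothing but the constraint \eqref{FOCPb4} in strong form, so it holds automatically for the state $u=u(q)$; the substance of the proof is the construction of $z$ and the derivation of \eqref{FOPCc}.

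First I would record that, since $A$ is coercive and \eqref{FOCPb4} is linear, the control-to-state map $q\mapsto u(q)$ is affine, and its directional derivative $w:=u'(q)h$ along $h\in L^2(\Omega)$ is the unique solution of
\begin{equation*}
A(w,v)=(h,v),\qquad \forall v\in H_0^{\frac{\alpha}{2}}(\Omega).
\end{equation*}
Differentiating $\hat{J}(q)=\tfrac12\|u(q)-u_d\|_{L^2}^2+\tfrac{\gamma}{2}\|q\|_{L^2}^2$ then gives $\hat{J}'(q)h=(u-u_d,w)+\gamma(q,h)$.

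Next I would define the adjoint state $z\in H_0^{\frac{\alpha}{2}}(\Omega)$ as the unique solution of the transposed variational problem $A(v,z)=(u-u_d,v)$ for all $v$, whose well-posedness follows from the same coercivity argument used for the state equation. Invoking Lemma \ref{change_order} to replace $(\mathcal{L}_\theta^\alpha v,z)$ by $(v,\mathcal{L}_{1-\theta}^\alpha z)$, and integrating the advection term by parts as $\lambda_1(Dv,z)=-\lambda_1(v,Dz)$ under the homogeneous boundary data, identifies this problem with the strong form \eqref{FOPCb3}. The decisive step is to test the sensitivity equation with $v=z$ and the adjoint equation with $v=w$, which yields the identity
\begin{equation*}
(u-u_d,w)=A(w,z)=(h,z),
\end{equation*}
so that $\hat{J}'(q)h=(\gamma q+z,h)$.

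Finally, because $\hat{J}$ is strictly convex and $U_{ad}$ is closed and convex, the optimal $q$ is characterized by $\hat{J}'(q)(v-q)\ge 0$ for every $v\in U_{ad}$, which is exactly $\int_\Omega(\gamma q+z)(v-q)\,dx\ge 0$, i.e.\ \eqref{FOPCc}. The point requiring care is the correct identification of the adjoint operator: one must check that transposing $A$ turns $\mathcal{L}_\theta^\alpha$ into $\mathcal{L}_{1-\theta}^\alpha$ and reverses the sign of the first-order term, and this is precisely where Lemma \ref{change_order} is indispensable. The remaining manipulations are the standard adjoint bookkeeping for a linear-quadratic PDE-constrained problem.
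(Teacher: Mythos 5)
Your proposal is correct and follows essentially the same route as the paper: form the reduced functional $\hat{J}$, write the first-order condition over the convex set $U_{ad}$, introduce the adjoint state solving \eqref{FOPCb3}, and use the duality of Lemma \ref{change_order} to convert $(u-u_d,\,u'(q)h)$ into $(z,h)$, yielding \eqref{FOPCc}. The only difference is presentational: you carry out the adjoint construction and the key identity $(u-u_d,w)=A(w,z)=(h,z)$ at the level of the bilinear form $A$, whereas the paper manipulates the strong forms of the sensitivity and adjoint equations directly, but the underlying argument is the same.
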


\begin{proof}

We denote by $f'(w)(h)$  the Frech\'et derivative of $f$ at $w$ in the direction $h$. The first-order necessary (and, owing to the convexity of cost functional $\hat{J}$, also sufficient) optimality condition takes the form
\begin{equation}\label{oc}
\hat{J}'(q)(v-q)=\int_{\Omega}u'(q)(v-q)(u-u_d)dx+\int_{\Omega}\gamma q(v-q)dx\geq0,\ \forall v\in U_{ad},
\end{equation}
 In view of the state equation, $\hat{u}:=u'(q)(v-q)$ satisfies
\begin{eqnarray*}\left\{\begin{aligned}
					&\mathcal{L}_\theta^\alpha\hat{u} + \lambda_1 D\hat{u} + 	\lambda_2 \hat{u} = v(x)-q(x), \ &x\in \Omega, \\
					&\hat{u}(x) = 0, \ &x \in \partial \Omega.				
\end{aligned}\right.\end{eqnarray*}
Introducing the following adjoint state equation
\begin{eqnarray*}\left\{\begin{aligned}
					&\mathcal{L}_{1-\theta}^\alpha z - \lambda_1 Dz + \lambda_2 z =u(x)-u_d(x), \ &x\in \Omega, \\
					&z(x) = 0,\ &x \in \partial \Omega.
\end{aligned}\right.\end{eqnarray*}
Then by using Lemma \ref{change_order}, we have
\begin{align*}
\int_{\Omega}\hat{u}(u-u_d)dx&=\int_{\Omega}\hat{u}(\mathcal{L}_{1-\theta}^\alpha z - \lambda_1 Dz + \lambda_2 z)dx\\
&=\int_{\Omega}(\mathcal{L}_{\theta}^\alpha\hat{u}+\lambda_1 D\hat{u}+\lambda_2 \hat{u})zdx=\int_{\Omega}(v-q)zdx.
\end{align*}
With this, \eqref{oc} becomes
$$\hat{J}'(q)(v-q)=\int_\Omega (\gamma q+z)(v-q) dx \geq 0, \   	v\in U_{ad}.$$
	\end{proof}
	\begin{Remark}
		According to \cite{chen2008}, the variational inequality \eqref{FOPCc} is equivalent to the following condition
	\begin{equation}\label{remark3.3}
		\gamma q = \max\{0,\bar{z}\} - z,
	\end{equation}
	in which $\bar{z} = \frac{1}{|\Omega|} \int_{\Omega} z(x) dx$ and $|\Omega|$ is the interval size.
	\end{Remark}

\subsection{Regularity for optimal control problem in weighted Sobolev space}
	
	\begin{Lemma}[\cite{ervin2020,li2020}]\label{lemma3.2}
		Suppose $n \leq s < n+1$, $n \in \mathbb{N}_0$, $p \geq 0$, $\mu > -1$ and $\psi \in H^s_{(\mu)} (J)$. If
		\begin{equation*}
			0 \leq \vartheta \leq s \ , \ \ \varsigma + 2p \geq \mu \ , \ \ \varsigma + 2p - \vartheta > -1 \ , \ \ \varsigma + 2p + \vartheta \geq \mu + s,
		\end{equation*}
		then $t^p\psi \in H^\vartheta_{(\varsigma)} (J)$. Moreover, there exists a positive constant $C$ independent of $\psi$, such that
		\begin{equation*}
			\| t^p \psi \|_{H^\vartheta_{(\varsigma)} (J)} \leq C \| \psi \|_{H^s_{(\mu)} (J)}.
		\end{equation*}
	\end{Lemma}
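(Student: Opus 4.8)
The plan is to verify the defining norm of $H^\vartheta_{(\varsigma)}(J)$ for $t^p\psi$ directly in terms of the $H^s_{(\mu)}(J)$-norm of $\psi$, using that the only point at which these weighted norms can be singular is $t=0$: near the right endpoint of $J$ the defining weights are smooth and bounded, and multiplication by the smooth factor $t^p$ is harmless there. By density I would first take $\psi$ smooth on $\overline J$ and recover the general statement at the very end from the norm bound.

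I begin with integer $\vartheta=m\in\mathbb N_0$ (and integer $s$). The Leibniz rule gives, for $0\le k\le m$,
$$D^k\!\left(t^p\psi\right)=\sum_{j=0}^{k}\binom{k}{j}\,p(p-1)\cdots(p-j+1)\,t^{\,p-j}\,D^{k-j}\psi,$$
so it suffices to control each summand in $L^2_{(\varsigma+k)}(J)$. Setting $i:=k-j$ for the order of differentiation falling on $\psi$, with $0\le i\le k\le m\le\lfloor s\rfloor$ (the last inequality uses $\vartheta\le s$), the corresponding integral is $\int_J t^{\,\varsigma+2p+2i-k}\,|D^{i}\psi|^2\,dt$, whereas the source norm supplies the control $\int_J t^{\,\mu+i}|D^{i}\psi|^2\,dt\le\|\psi\|^2_{H^s_{(\mu)}(J)}$.

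Two regimes then appear according to the sign of $\bigl(\varsigma+2p+2i-k\bigr)-(\mu+i)=\varsigma+2p+i-k-\mu$. When it is nonnegative the target weight is the heavier one, and monotonicity of $t\mapsto t^a$ on $(0,1)$ yields the bound with constant $(3/4)^{\varsigma+2p+i-k-\mu}$; the top term $i=k$ is settled here by the hypothesis $\varsigma+2p\ge\mu$. When it is negative the target weight is too singular for a pointwise comparison, and I would invoke a weighted Hardy inequality to trade one order of differentiation for two units of weight, iterating until the exponent is raised to $\mu+(i+r)$ at the cost of differentiating $\psi$ a further $r$ times. The hypotheses $\varsigma+2p+\vartheta\ge\mu+s$ and $\vartheta\le s$ together guarantee that at most $\lfloor s\rfloor$ derivatives of $\psi$ are ever required, so the iteration terminates inside the available regularity, while the strict inequality $\varsigma+2p-\vartheta>-1$ keeps every intermediate weight exponent away from the critical value $-1$ at which the Hardy estimate degenerates (and already guarantees that the most singular term $\int_J t^{\,\varsigma+2p-m}|\psi|^2\,dt$ is finite).

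For non-integer $\vartheta=\lfloor\vartheta\rfloor+\nu$ the extra ingredient is the seminorm $|t^p\psi|_{H^\vartheta_{(\varsigma)}(J)}$ over $\Lambda^*$. I would handle it in one of two ways: either by interpolation via the $K$-method between the integer endpoints already treated, noting that all four hypotheses are affine in $(\vartheta,\varsigma,\mu,s)$ and therefore remain valid along the interpolation scale; or by estimating the difference $D^{\lfloor\vartheta\rfloor}(t^p\psi)(t)-D^{\lfloor\vartheta\rfloor}(t^p\psi)(\tau)$ directly, using that on $\Lambda^*$ one has $t\asymp\tau$ so that the factors $t^{\,p-j}$ are comparable and the Leibniz terms again reduce, via the same two regimes, to the seminorm of $\psi$ against a shifted weight. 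I expect this fractional part to be the main obstacle: here the source carries the non-integer weight exponent $\mu+s$ and the target $\varsigma+\vartheta$, so the Hardy bookkeeping must be carried out for the seminorm rather than a clean $L^2$ weight, and it is precisely at this step that the sharp form $\varsigma+2p-\vartheta>-1$ (with $\vartheta$ rather than $\lfloor\vartheta\rfloor$) is needed. Once the norm inequality is in hand for smooth $\psi$, a standard density argument extends it to all $\psi\in H^s_{(\mu)}(J)$.
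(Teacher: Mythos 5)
You cannot be graded here against an in-paper argument, because the paper offers none: Lemma \ref{lemma3.2} is imported verbatim from \cite{ervin2020,li2020} and used as a black box. Judged on its own merits, the integer-order half of your plan is essentially workable, but one step is stated incorrectly: the Hardy inequality in the bare form you invoke, $\int_J t^{A}|v|^2\,dt \lesssim \int_J t^{A+2}|v'|^2\,dt$, is false (take $v\equiv 1$: the right side vanishes). In the regime you actually need it --- every target exponent is $>-1$ precisely because $\varsigma+2p-\vartheta>-1$ --- the correct tool is to write $v(t)=v(3/4)-\int_t^{3/4}v'\,d\tau$ and apply the dual Hardy inequality, at the price of endpoint trace terms $|D^{i}\psi(3/4)|^2$; these are harmless since all weights are comparable to constants away from $t=0$, but they must appear. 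With that repair, your bookkeeping of where $\varsigma+2p\geq\mu$, $\varsigma+2p-\vartheta>-1$ and $\varsigma+2p+\vartheta\geq\mu+s$ enter, and your count showing the iteration terminates within $\lfloor s\rfloor$ derivatives, are correct.

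The genuine gap is the non-integer case, which is exactly the case this paper uses: in Lemma \ref{solution_space} both $s$ and $\vartheta$ are generically fractional. Your interpolation alternative presupposes that the intrinsically defined space $H^{s}_{(\eta)}(J)$ --- whose seminorm is a double integral localized to $\Lambda^*$, not to $J\times J$ --- coincides with the K-method interpolation space of the integer-order spaces along a scale in which the weight exponent varies as well; that identification is a substantive theorem of Stein--Weiss type, available neither in this paper nor as something you prove, and observing that the four hypotheses are affine in $(\vartheta,\varsigma,\mu,s)$ does not substitute for it. Your direct alternative rests on the right mechanism ($t\asymp\tau$ on $\Lambda^*$, so the factors $t^{p-j}$ are comparable and mean-value estimates apply to $t^{p-j}-\tau^{p-j}$), but it stops at the decisive step: after the Leibniz splitting, the terms where the difference falls on $D^{l}\psi$ with $l<\lfloor s\rfloor$ produce weighted fractional seminorms of \emph{intermediate} derivatives, which are not among the quantities $\|D^{k}\psi\|_{L^2_{(\mu+k)}(J)}$ and $|\psi|_{H^{s}_{(\mu)}(J)}$ that make up the source norm; bounding them requires a separate Hardy-type/embedding lemma, and when $\lfloor\vartheta\rfloor<\lfloor s\rfloor$ one must further trade a fractional seminorm against integer-order weighted norms. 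You flag this yourself as "the main obstacle," but it is left unresolved, so the proposal is incomplete exactly where the lemma has its content.
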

	
	\begin{Lemma}[\cite{ervin2020}]\label{lemma3.3}
		Let $s \geq 0$, $\phi \in H^s_{(\mu)} (J)$, $\mu>-1$, and $g \in C^{\lceil s\rceil}(J)$, then
		\begin{equation*}
			\| g \phi \|_{H^s_{(\mu)} (J)} \leq C \| g \|_{C^{[s]}(J)} \| \phi \|_{H^s_{(\mu)} (J)},
		\end{equation*}
		where $\lceil s\rceil$ is the smallest integer greater than $s$.
	\end{Lemma}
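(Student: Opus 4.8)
The plan is to estimate $\|g\phi\|_{H^s_{(\mu)}(J)}$ piece by piece, splitting into the integer and non-integer cases, and to use throughout that $J=(0,3/4)$ is bounded, so that $x^{a}\le x^{b}$ whenever $a\ge b$ (indeed $x^{a-b}\le 1$ for $0<x<3/4<1$); such weight comparisons may alternatively be read off from Lemma \ref{lemma3.2}. In the integer case $s=\lfloor s\rfloor$ I would apply the Leibniz rule $D^{k}(g\phi)=\sum_{j=0}^{k}\binom{k}{j}D^{j}g\,D^{k-j}\phi$ for each $0\le k\le\lfloor s\rfloor$, estimate each summand in $L^2_{(\mu+k)}(J)$ by pulling out $\|D^{j}g\|_{\infty}\le\|g\|_{C^{[s]}(J)}$, and then compare weights via $x^{\mu+k}\le x^{\mu+(k-j)}$ to get $\|D^{j}g\,D^{k-j}\phi\|_{L^2_{(\mu+k)}}\le\|g\|_{C^{[s]}}\|D^{k-j}\phi\|_{L^2_{(\mu+k-j)}}\le\|g\|_{C^{[s]}}\|\phi\|_{H^s_{(\mu)}}$; summing over $k$ and $j$ closes this case.

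For the non-integer case, write $s=\lfloor s\rfloor+\nu$ with $0<\nu<1$. The integer-order part of the norm is handled exactly as above, so the crux is the Gagliardo seminorm $|g\phi|_{H^s_{(\mu)}}$, whose integrand involves $D^{\lfloor s\rfloor}(g\phi)$. Expanding this by Leibniz and, for each product $D^{j}g\cdot D^{\lfloor s\rfloor-j}\phi$, inserting the telescoping identity $a(x)b(x)-a(y)b(y)=a(x)\,[b(x)-b(y)]+[a(x)-a(y)]\,b(y)$, I would reduce $|g\phi|^2_{H^s_{(\mu)}}$ to a sum of a ``Type~1'' family $\iint_{\Lambda^*}x^{\mu+s}|D^{j}g(x)|^2\,|x-y|^{-(1+2\nu)}\,|D^{m}\phi(x)-D^{m}\phi(y)|^2\,dx\,dy$ and a ``Type~2'' family $\iint_{\Lambda^*}x^{\mu+s}|D^{j}g(x)-D^{j}g(y)|^2\,|x-y|^{-(1+2\nu)}\,|D^{m}\phi(y)|^2\,dx\,dy$, where $m=\lfloor s\rfloor-j$.

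The Type~2 terms and the Type~1 term with $j=0$ are the easy ones. For $j=0$ the Type~1 integral is literally $\|g\|_{\infty}^2\,|\phi|^2_{H^s_{(\mu)}}$. For Type~2 I would use the mean-value bound $|D^{j}g(x)-D^{j}g(y)|\le\|g\|_{C^{\lceil s\rceil}}\,|x-y|$, which is exactly where the hypothesis $g\in C^{\lceil s\rceil}(J)$ enters; this collapses the kernel to $|x-y|^{1-2\nu}$, integrable over the comparable region $\Lambda^*$ precisely because $\nu<1$. Performing the inner integration produces a factor $y^{2-2\nu}$, and since $\mu+s+2-2\nu=\mu+m+(2-\nu+j)\ge\mu+m$ with $m\le\lfloor s\rfloor$, the bounded-domain weight comparison bounds each Type~2 term by $C\|g\|^2_{C^{\lceil s\rceil}}\,\|D^{m}\phi\|^2_{L^2_{(\mu+m)}}\le C\|g\|^2_{C^{\lceil s\rceil}}\,\|\phi\|^2_{H^s_{(\mu)}}$.

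I expect the genuine obstacle to be the Type~1 terms with $j\ge1$, where the difference quotient is taken of the lower-order derivative $D^{m}\phi$ with $m<\lfloor s\rfloor$, so the quantity is \emph{not} directly one of the defining seminorms of $\phi$. To control it I would write $D^{m}\phi(x)-D^{m}\phi(y)=\int_{y}^{x}D^{m+1}\phi(t)\,dt$, apply Cauchy–Schwarz to obtain $|D^{m}\phi(x)-D^{m}\phi(y)|^2\le|x-y|\int_{y}^{x}|D^{m+1}\phi(t)|^2\,dt$, and then interchange the order of integration. Because $x,y,t$ are mutually comparable on $\Lambda^*$, the kernel integrates to a factor $t^{\,2-2\nu}$ (finite again only because $\nu<1$), leaving $\int_{J}t^{\,\mu+s+2-2\nu}|D^{m+1}\phi(t)|^2\,dt$; as $m+1\le\lfloor s\rfloor$ and $\mu+s+2-2\nu\ge\mu+(m+1)$, the weight comparison bounds this by $C\|g\|^2_{C^{\lceil s\rceil}}\|\phi\|^2_{H^s_{(\mu)}}$. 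Assembling the integer-order part with the Type~1 and Type~2 seminorm bounds then yields the stated product estimate. (One could instead settle the two integer endpoints $H^{\lfloor s\rfloor}_{(\mu)}$ and $H^{\lceil s\rceil}_{(\mu)}$ and interpolate by the K-method, the multiplier norm at each endpoint being controlled by $\|g\|_{C^{\lceil s\rceil}}$; I prefer the direct seminorm estimate since it matches the intrinsic definition of $H^s_{(\eta)}(J)$.)
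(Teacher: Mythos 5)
The paper offers no proof to compare against: Lemma \ref{lemma3.3} is imported verbatim as a cited result from \cite{ervin2020}, so your argument has to stand on its own, and it essentially does. The structure is the standard one for weighted multiplier estimates, and each step checks out: the integer case by Leibniz plus the weight monotonicity $x^{\mu+k}\le x^{\mu+k-j}$ on the bounded interval $J\subset(0,1)$; the fractional case by splitting the Gagliardo integrand of $D^{\lfloor s\rfloor}(g\phi)$ into your Type~1/Type~2 families, with the hypothesis $g\in C^{\lceil s\rceil}(J)$ entering exactly where you say (the mean-value bound needs $D^{j+1}g$ for $j\le\lfloor s\rfloor$, i.e. derivatives up to $\lfloor s\rfloor+1=\lceil s\rceil$), and with correct exponent bookkeeping $\mu+s+2-2\nu=\mu+m+(j+2-\nu)$. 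Incidentally, your proof shows the constant naturally involves $\|g\|_{C^{\lceil s\rceil}(J)}$; the $\|g\|_{C^{[s]}(J)}$ appearing in the paper's statement is best read as a typo for that.

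One point you must make explicit, because as written it is the only place the argument could be read as failing: in the Type~1 terms with $j\ge1$, after Cauchy--Schwarz the kernel is $|x-y|^{-2\nu}$, which is \emph{not} integrable in $y$ alone once $\nu\ge1/2$, so one cannot literally ``perform the inner integration'' of the kernel first. The interchange works because $t$ lies between $x$ and $y$, hence
\begin{equation*}
|x-y|^{-2\nu}\le \bigl(|x-t|\,|t-y|\bigr)^{-\nu},
\end{equation*}
and each factor is integrable over the comparable region precisely because $\nu<1$; integrating in $x$ and $y$ for fixed $t$ then produces the claimed weight $t^{\,\mu+s+2-2\nu}$. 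Your parenthetical ``finite again only because $\nu<1$'' indicates you have this in mind, but the splitting should be stated. The analogous worry does not arise in your Type~2 terms, where the kernel $|x-y|^{1-2\nu}$ really is integrable in one variable for all $\nu<1$.
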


	\begin{Lemma}\label{solution_space}
		If $\omega^{-\sigma , -\sigma^*} u \in H^s_{\omega^{\sigma , \sigma^*}} (\Omega)$ and $s < \min \{ 5\sigma + 1 , 5\sigma^* + 1 \}$, then $u \in H^\vartheta_{\omega^{\sigma , \sigma^*}} (\Omega)$ for $
\vartheta = \min \{ s , 3\sigma + 1 - \epsilon , 3\sigma^* + 1 - \epsilon \}$ with arbitrarily small $\epsilon > 0$.
	\end{Lemma}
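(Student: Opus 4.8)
The plan is to reduce the weighted-Sobolev statement on $\Omega$ to a pair of one-sided statements on $J=(0,\tfrac34)$ via Lemma \ref{lemma2.3}, and then to transfer regularity through the singular weight $\omega^{\sigma,\sigma^*}$ by combining the multiplication lemmas \ref{lemma3.2} and \ref{lemma3.3}. Concretely, I would write $w:=\omega^{-\sigma,-\sigma^*}u$, so that $u=(1-x)^\sigma x^{\sigma^*}w$ and the hypothesis reads $w\in H^s_{\omega^{\sigma,\sigma^*}}(\Omega)$. By Lemma \ref{lemma2.3} this is equivalent to $w\in H^s_{(\sigma^*)}(J)$ together with $\hat w\in H^s_{(\sigma)}(J)$, where $\hat w(x)=w(1-x)$; likewise the desired conclusion $u\in H^\vartheta_{\omega^{\sigma,\sigma^*}}(\Omega)$ is equivalent to $u\in H^\vartheta_{(\sigma^*)}(J)$ and $\hat u\in H^\vartheta_{(\sigma)}(J)$. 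A direct computation gives $\hat u(x)=u(1-x)=(1-x)^{\sigma^*}x^\sigma\hat w(x)$, so the two target quantities are $u=(1-x)^\sigma\,[x^{\sigma^*}w]$ and $\hat u=(1-x)^{\sigma^*}\,[x^\sigma\hat w]$ on $J$.

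The key observation is that on $J$ the factors $(1-x)^\sigma$ and $(1-x)^{\sigma^*}$ are $C^\infty(\bar J)$, since $1-x$ stays bounded away from $0$ for $x<\tfrac34$. Hence Lemma \ref{lemma3.3} lets me strip these smooth factors at the cost of a multiplicative constant, reducing the problem to bounding $\|x^{\sigma^*}w\|_{H^\vartheta_{(\sigma^*)}(J)}$ and $\|x^\sigma\hat w\|_{H^\vartheta_{(\sigma)}(J)}$. Each of these is exactly a single application of Lemma \ref{lemma3.2}: for the first I take $\psi=w\in H^s_{(\sigma^*)}(J)$ with $\mu=\sigma^*$, $p=\sigma^*$ and target weight $\varsigma=\sigma^*$; for the second I take $\psi=\hat w\in H^s_{(\sigma)}(J)$ with $\mu=\sigma$, $p=\sigma$, $\varsigma=\sigma$.

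It then remains to check the four admissibility inequalities of Lemma \ref{lemma3.2} for the chosen $\vartheta=\min\{s,\,3\sigma+1-\epsilon,\,3\sigma^*+1-\epsilon\}$. For the $x^{\sigma^*}w$ piece they reduce to $0\le\vartheta\le s$, the automatic bound $3\sigma^*\ge\sigma^*$, the strict upper bound $\vartheta<3\sigma^*+1$, and the lower bound $\vartheta\ge s-2\sigma^*$; the $x^\sigma\hat w$ piece is identical with $\sigma^*$ replaced by $\sigma$. The upper bounds $\vartheta<3\sigma+1$ and $\vartheta<3\sigma^*+1$ together with $\vartheta\le s$ are exactly what caps $\vartheta$ at $\min\{s,3\sigma+1-\epsilon,3\sigma^*+1-\epsilon\}$, the arbitrarily small $\epsilon$ absorbing the strictness. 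The step I expect to be the crux is the last inequality $\varsigma+2p+\vartheta\ge\mu+s$, i.e. $\vartheta\ge s-2\sigma^*$ and $\vartheta\ge s-2\sigma$: it is automatic when $\vartheta=s$, but in the regime $\vartheta=3\sigma+1-\epsilon<s$ (say $\sigma\le\sigma^*$) it demands $s\le 5\sigma+1-\epsilon$, which is precisely where the standing hypothesis $s<\min\{5\sigma+1,5\sigma^*+1\}$ enters and why the bound must be strict, so that a small $\epsilon>0$ leaves the inequality feasible. Reassembling the two one-sided estimates through Lemma \ref{lemma2.3} then delivers $u\in H^\vartheta_{\omega^{\sigma,\sigma^*}}(\Omega)$, completing the argument.
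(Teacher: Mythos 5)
Your proposal is correct and takes essentially the same route as the paper's proof: reduction to the two one-sided statements on $J$ via Lemma \ref{lemma2.3}, removal of the smooth $(1-x)$-factors via Lemma \ref{lemma3.3}, and transfer of regularity through the singular power $x^{\sigma^*}$ (resp.\ $x^{\sigma}$) via Lemma \ref{lemma3.2} with the same parameter choice $\mu=p=\varsigma=\sigma^*$ (resp.\ $\sigma$), the only immaterial difference being that you apply Lemma \ref{lemma3.2} before rather than after Lemma \ref{lemma3.3}. Your identification of the crux --- that the condition $\varsigma+2p+\vartheta\ge\mu+s$ is where the strict hypothesis $s<\min\{5\sigma+1,\,5\sigma^*+1\}$ is consumed, with the small $\epsilon>0$ keeping it feasible --- is exactly the content behind the paper's (more tersely stated) verification of the three inequalities.
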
	

	\begin{proof}
		According to Lemma \ref{lemma2.3},
		\begin{align*}
			\omega^{-\sigma , -\sigma^*} u \in H^s_{\omega^{\sigma , \sigma^*}} (\Omega) \ \ \Leftrightarrow \ \ &\omega^{-\sigma , -\sigma^*} u = (1-x)^{-\sigma} x^{-\sigma^*} u \in H^s_{(\sigma^*)} (J) \ \ \text{and}\\[2mm]
			&\hat{\omega}^{-\sigma , -\sigma^*} \hat{u} = x^{-\sigma} (1-x)^{-\sigma^*} \hat{u} \in H^s_{(\sigma)} (J) .
		\end{align*}
		\begin{itemize}[leftmargin=*]
			\item On one hand, from $(1-x)^{-\sigma} x^{-\sigma^*} u \in H^s_{(\sigma^*)} (J)$ and Lemma \ref{lemma3.3} $\big( \text{obviously} \ (1-x)^\sigma \in C^\infty (J) \big) $ we know that
			\begin{equation*}
				x^{-\sigma^*} u = (1-x)^{\sigma} \cdot 	(1-x)^{-\sigma} x^{-\sigma^*} u \in H^s_{(\sigma^*)} (J).
			\end{equation*}
			According to Lemma \ref{lemma3.2}, let $s = s$, $\mu =  p =  \varsigma = \sigma^*$, we have $\vartheta = \min \{ s , 3\sigma^* + 1 - \epsilon \}$.  In fact,
			\begin{align*}
				\begin{cases}
					0 \leq \vartheta \leq s , \\[1mm]
					\vartheta < 3\sigma^* + 1 , \\[1mm]
					\vartheta + 2\sigma^* \geq s ,
				\end{cases}
			\end{align*}
			which results in
			\begin{equation}\label{u1}
				u \in H^{\min \{ s ,\, 3\sigma^* + 1 - \epsilon \}}_{(\sigma^*)} (J).
			\end{equation}
			
			\item On the other hand, from $x^{-\sigma} (1-x)^{-\sigma^*} \hat{u} \in H^s_{(\sigma)} (J)$ and Lemma \ref{lemma3.3} we know that
			\begin{equation*}
				x^{-\sigma} \hat{u} = (1-x)^{\sigma^*} \cdot 	(1-x)^{-\sigma^*} x^{-\sigma} \hat{u} \in H^s_{(\sigma)} (J).
			\end{equation*}
			According to Lemma \ref{lemma3.2}, let $s = s$, $\mu = p = \varsigma = \sigma$, we have $\vartheta = \min \{ s ,\, 3\sigma + 1 - \epsilon \}$. Indeed,			
			\begin{align*}
				\begin{cases}
					0 \leq \vartheta \leq s,  \\[1mm]
					\vartheta < 3\sigma + 1 , \\[1mm]
					\vartheta + 2\sigma \geq s ,
				\end{cases}
			\end{align*}
			which implies
			\begin{equation}\label{u2}
				\hat{u} \in H^{\min \{ s ,\, 3\sigma + 1 - \epsilon \}}_{(\sigma)} (J) .
			\end{equation}
		\end{itemize}
		Putting \eqref{u1} and \eqref{u2} together and according to Lemma \ref{lemma2.3}, it can be deduced that
		\begin{equation*}
			u \in H^{\min \{ s ,\, 3\sigma + 1 - \epsilon ,\, 3\sigma^* + 1 - \epsilon \}}_{\omega^{\sigma , \sigma^*}} (\Omega).
		\end{equation*}
	\end{proof}

	\begin{Lemma}\label{regulity_state}
		For the state equation \eqref{FOPCa3} with right hand term $\tilde{f} \in H^r_{\omega^{\sigma^* , \sigma}} (\Omega)$, $r \geq 0$, there exists a unique solution $u$ such that
		\begin{equation*}\label{regulityu}
			\omega^{-\sigma , -\sigma^*}u \in H^{\min \{ r + \alpha ,\, s \}}_{\omega^{\sigma , \sigma^*}} (\Omega),\, u \in H^{\min \{ r + \alpha ,\, s \}}_{\omega^{\sigma , \sigma^*}} (\Omega),
		\end{equation*}
		in which $s =2\alpha + \min\{\sigma,\sigma^*\} - 1 - \epsilon $.
	\end{Lemma}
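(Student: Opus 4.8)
The plan is to separate the problem into the exact inversion of the leading fractional operator, for which the Jacobi spectral mapping is diagonal, and the treatment of the advection--reaction terms as a perturbation whose boundary behaviour ultimately fixes the cap $s$. Existence and uniqueness are not at issue here: they follow from the coercive weak formulation already recorded via \cite{ervin2006}, which places $u\in H_0^{\alpha/2}(\Omega)$ and supplies the base level of regularity from which to bootstrap.

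First I would rewrite \eqref{FOPCa3} as $\mathcal{L}_\theta^\alpha u = g$ with $g:=\tilde f-\lambda_1 Du-\lambda_2 u$, set $v:=\omega^{-\sigma,-\sigma^*}u$, and expand $v=\sum_n a_n Q_n^{\sigma,\sigma^*}$ and $g=\sum_n b_n Q_n^{\sigma^*,\sigma}$ in the respective orthogonal systems. By Lemma \ref{lem21} the operator is diagonalized, $a_n=b_n/\lambda_{\theta,n}^\alpha$, and since $\lambda_{\theta,n}^\alpha\sim C\,n^\alpha$ while $h_n^{\sigma,\sigma^*}=h_n^{\sigma^*,\sigma}$ (immediate from the formula for $h_n^{\gamma,\beta}$), the coefficient characterization of the weighted norms gives the smoothing estimate $g\in H^\rho_{\omega^{\sigma^*,\sigma}}(\Omega)\Rightarrow v\in H^{\rho+\alpha}_{\omega^{\sigma,\sigma^*}}(\Omega)$, valid up to the threshold at which the Jacobi-coefficient description of $H^s_{\omega^{\sigma,\sigma^*}}$ stays faithful (the same range as in Lemma \ref{solution_space}).

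Next I would estimate the regularity of $g$ in $H^{\,\cdot}_{\omega^{\sigma^*,\sigma}}(\Omega)$. The reaction term $\lambda_2 u$ is harmless, so everything hinges on $\lambda_1 Du$. Writing $u=(1-x)^\sigma x^{\sigma^*}v$ and differentiating produces the singular factors $(1-x)^{\sigma-1}x^{\sigma^*}$ and $(1-x)^\sigma x^{\sigma^*-1}$; translating to the one-sided spaces through Lemma \ref{lemma2.3} and applying Lemmas \ref{lemma3.2}--\ref{lemma3.3}, the boundary behaviour $Du\sim x^{\sigma^*-1}$ near $0$ and $Du\sim(1-x)^{\sigma-1}$ near $1$ caps the achievable regularity of $Du$ at $\min\{2\sigma+\sigma^*,\sigma+2\sigma^*\}-1-\epsilon=\alpha+\min\{\sigma,\sigma^*\}-1-\epsilon$, \emph{independently of how smooth $v$ is}. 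Feeding this back through the $\alpha$-smoothing yields $v\in H^{\min\{r+\alpha,\,s\}}_{\omega^{\sigma,\sigma^*}}$ with $s=2\alpha+\min\{\sigma,\sigma^*\}-1-\epsilon$, which is precisely the claimed exponent.

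To make this rigorous rather than heuristic I would run a finite bootstrap. Starting from $u\in H_0^{\alpha/2}$, each pass computes $\operatorname{reg}(g)=\min\{r,\operatorname{reg}(Du),\operatorname{reg}(u)\}$, gains $\alpha$ from the inversion, and then updates $\operatorname{reg}(u)$ through Lemma \ref{solution_space}. Because $\alpha>1$, the regularity of $u$ strictly increases (by roughly $\alpha-1$) at each step until $\operatorname{reg}(Du)$ saturates at the boundary cap above or the datum cap $r$ is met, so the iteration terminates after finitely many steps at $\min\{r+\alpha,s\}$. Finally one checks $s\le 3\min\{\sigma,\sigma^*\}+1$, so that Lemma \ref{solution_space} transfers the regularity from $v=\omega^{-\sigma,-\sigma^*}u$ to $u$ itself without loss and both functions carry the asserted exponent. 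The main obstacle is the middle step: tracking how differentiation interacts with the singular weights across the index swap $\sigma\leftrightarrow\sigma^*$ --- that is, landing $Du$ in the correctly weighted \emph{target} space via Lemmas \ref{lemma3.2} and \ref{lemma2.3} --- and verifying that the boundary exponents $\sigma^*-1$ and $\sigma-1$ produce exactly the threshold $\alpha+\min\{\sigma,\sigma^*\}-1$ and no better.
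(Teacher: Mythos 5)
Your endgame coincides with the paper's: once $\omega^{-\sigma,-\sigma^*}u\in H^{\min\{r+\alpha,\,s\}}_{\omega^{\sigma,\sigma^*}}(\Omega)$ is in hand, both you and the paper verify the hypothesis of Lemma \ref{solution_space} and use $3\min\{\sigma,\sigma^*\}+1-\epsilon\geq s$ to transfer the same index to $u$ itself. The difference is upstream: the paper does not prove the regularity of $\omega^{-\sigma,-\sigma^*}u$ at all --- it cites \cite{ervin2020} for exactly that statement --- whereas you attempt to re-derive it by diagonalization plus a bootstrap. That is indeed the strategy of the cited reference, but as written your re-derivation has genuine gaps, located precisely at the steps you yourself call ``the main obstacle.''

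First, the cap computation for the advection term. You want to place $Du$ in the swapped-index scale $H^{\,\cdot}_{\omega^{\sigma^*,\sigma}}(\Omega)$ and extract the ceiling $\alpha+\min\{\sigma,\sigma^*\}-1-\epsilon$ from the boundary factors $x^{\sigma^*-1}$ and $(1-x)^{\sigma-1}$. But the tool you invoke, Lemma \ref{lemma3.2}, requires $p\geq 0$, while these factors have exponents $\sigma^*-1$ and $\sigma-1$, which are negative whenever $\sigma,\sigma^*<1$ (i.e.\ for all $\theta\in(0,1)$); it therefore cannot be applied to them directly. Making this step rigorous requires the derivative identity \eqref{ind}, so that $Du$ is expressed through $\omega^{\sigma-1,\sigma^*-1}Q_{n+1}^{\sigma-1,\sigma^*-1}$, followed by an analysis of the connection coefficients between the families $\{Q_n^{\sigma-1,\sigma^*-1}\}$ and $\{Q_n^{\sigma^*,\sigma}\}$ --- this is the technical core of \cite{ervin2020,zheng2020}, and your outline asserts its conclusion rather than proving it. Second, the bootstrap cannot start where you start it: from $u\in H_0^{\alpha/2}(\Omega)$ the term $Du$ has negative order $\alpha/2-1<0$, and all the machinery you rely on (the coefficient characterization of the norms, Lemmas \ref{lemma3.2}--\ref{solution_space}) is stated for non-negative orders, so the first pass of the iteration is not covered by your tools. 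A smaller point: the threshold at which the Jacobi-coefficient description of the norm stays ``faithful'' is not the same as the caps $3\sigma+1$, $3\sigma^*+1$ in Lemma \ref{solution_space}; the latter arise from the cost of multiplying by the singular weight $\omega^{\sigma,\sigma^*}$, not from any failure of the coefficient characterization. None of this makes the strategy wrong --- your exponent arithmetic reproduces the correct value of $s$ --- but the hard analytic content is exactly what is missing, and in the paper it is supplied by citation rather than re-proved.
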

\begin{proof}
		The well-posedness and regularity of the state equation \eqref{FOPCa3} has been already established in \cite{ervin2020}: For the state equation \eqref{FOPCa3} with right hand term $\tilde{f} \in H^r_{\omega^{\sigma^* , \sigma}}
(\Omega)$, we have
		\begin{equation*}
			\omega^{-\sigma , -\sigma^*} u \in H^{\min \{ r + \alpha ,\, s \}}_{\omega^{\sigma , \sigma^*}} (\Omega).
		\end{equation*} 	
		It is easy to verify that
		\begin{equation*}
			\min \{ r + \alpha , s \} < \min \{ 5\sigma + 1 , 5\sigma^* + 1 \},
		\end{equation*}
		so all the assumptions in Lemma \ref{solution_space} are satisfied, which leads to
		\begin{equation*}
			u \in H^{\min \{ r + \alpha , \,s  ,\, 3\sigma + 1 - \epsilon ,\, 3\sigma^* + 1 - \epsilon \} } _{\omega^{\sigma , \sigma^*}} (\Omega).
		\end{equation*}
		According to $\sigma^*+\sigma=\alpha$ and $\sigma^*,\,\sigma\in(0,1]$ in Remark \ref{remark2.2}, we know that $3\min\{\sigma ,\, \sigma^*\}+ 1 - \epsilon> s$, which implies
		\begin{equation*}
			u \in H^{\min \{ r + \alpha ,\, s \}}_{\omega^{\sigma , \sigma^*}} (\Omega) .
		\end{equation*}
	\end{proof}
A similar argument can be applied to the adjoint state equation to derive the following result.
	\begin{Lemma}\label{regularityz}
	 For the adjoint state equation \eqref{FOPCb3} with right hand term $g \in H^r_{\omega^{\sigma , \sigma^*}} (\Omega)$, $r \geq 0$, there exists a unique solution $z$ such
that
	\begin{equation*}
		\omega^{-\sigma^* , -\sigma}z\in H^{\min \{ r + \alpha , s \}}_{\omega^{\sigma^* , \sigma}} (\Omega),\ z \in H^{\min \{ r + \alpha , s \}}_{\omega^{\sigma^* , \sigma}} (\Omega)
	\end{equation*}
	in which $s =2\alpha + \min\{\sigma,\sigma^*\} - 1 - \epsilon $.
  \end{Lemma}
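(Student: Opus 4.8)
The plan is to exploit the structural symmetry between the adjoint state equation \eqref{FOPCb3} and the state equation \eqref{FOPCa3}, thereby reducing the claim to a direct application of Lemma \ref{regulity_state}. The two equations differ only in three respects: the leading operator is $\mathcal{L}_{1-\theta}^\alpha$ rather than $\mathcal{L}_\theta^\alpha$, the advection coefficient carries the opposite sign ($-\lambda_1$ in place of $+\lambda_1$), and the source lies in $H^r_{\omega^{\sigma,\sigma^*}}(\Omega)$ rather than $H^r_{\omega^{\sigma^*,\sigma}}(\Omega)$. I would first show that passing from $\theta$ to $1-\theta$ in the defining relation \eqref{sigma} simply interchanges the exponents $\sigma$ and $\sigma^*$. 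Indeed, since $\sigma^*=\alpha-\sigma$, relation \eqref{sigma} reads $\theta=\sin(\pi\sigma^*)/(\sin(\pi\sigma^*)+\sin(\pi\sigma))$, whence $1-\theta=\sin(\pi\sigma)/(\sin(\pi\sigma^*)+\sin(\pi\sigma))$; comparing this with the equation determining the parameter attached to $\mathcal{L}_{1-\theta}^\alpha$ shows that this parameter equals $\sigma^*$ and its companion equals $\sigma$. Correspondingly, the diagonalization identity of Lemma \ref{lem21} takes the form
\begin{equation*}
\mathcal{L}_{1-\theta}^\alpha\left[(1-x)^{\sigma^*}x^{\sigma}Q_n^{\sigma^*,\sigma}(x)\right]=\lambda_{1-\theta,n}^\alpha Q_n^{\sigma,\sigma^*}(x),
\end{equation*}
which is precisely the identity for the state operator with $\sigma$ and $\sigma^*$ swapped.

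Next I would observe that the sign of the advection term is immaterial to the regularity theory: the well-posedness and weighted-Sobolev regularity result from \cite{ervin2020} invoked in the proof of Lemma \ref{regulity_state} applies to the whole class of operators $\mathcal{L}_\theta^\alpha\pm\lambda_1 D+\lambda_2$ with any nonzero $\lambda_1$ and $\lambda_2\geq0$, the lower-order terms entering only as perturbations that do not alter the boundary-singularity structure dictated by the leading fractional operator. Replacing $\lambda_1$ by $-\lambda_1$ therefore changes nothing in the argument.

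With these two observations, Lemma \ref{regulity_state} applies verbatim to \eqref{FOPCb3} under the substitution $\sigma\leftrightarrow\sigma^*$. The hypothesis $\tilde f\in H^r_{\omega^{\sigma^*,\sigma}}(\Omega)$ becomes $g\in H^r_{\omega^{\sigma,\sigma^*}}(\Omega)$, matching the stated assumption, and the conclusion $\omega^{-\sigma,-\sigma^*}u\in H^{\min\{r+\alpha,s\}}_{\omega^{\sigma,\sigma^*}}(\Omega)$ becomes $\omega^{-\sigma^*,-\sigma}z\in H^{\min\{r+\alpha,s\}}_{\omega^{\sigma^*,\sigma}}(\Omega)$, which is the first assertion. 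The second assertion, $z\in H^{\min\{r+\alpha,s\}}_{\omega^{\sigma^*,\sigma}}(\Omega)$, then follows by invoking Lemma \ref{solution_space} with $\sigma$ and $\sigma^*$ interchanged, exactly as in the final step of the proof of Lemma \ref{regulity_state}; note that the threshold $s=2\alpha+\min\{\sigma,\sigma^*\}-1-\epsilon$ is symmetric in $\sigma$ and $\sigma^*$, so the inequality $3\min\{\sigma,\sigma^*\}+1-\epsilon>s$ exploited there is unaffected by the swap.

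The only genuinely non-mechanical step — and hence the main point requiring care — is the verification that the parameter associated with $\mathcal{L}_{1-\theta}^\alpha$ through \eqref{sigma} is indeed the pair $(\sigma^*,\sigma)$ and not some unrelated pair; once this interchange is established, the remainder is a transcription of the state-equation argument with the two weight exponents swapped.
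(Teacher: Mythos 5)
Your proposal is correct and is essentially the paper's own proof: the paper dispatches this lemma with the single remark that ``a similar argument can be applied to the adjoint state equation,'' and your argument is precisely that similar argument made explicit, with the key point---that replacing $\theta$ by $1-\theta$ in \eqref{sigma} swaps the pair $(\sigma,\sigma^*)$ to $(\sigma^*,\sigma)$, so that Lemma \ref{regulity_state} and Lemma \ref{solution_space} apply verbatim under this interchange---verified correctly. Your observations that the sign of $\lambda_1$ is immaterial (only $\lambda_1\neq 0$ is needed) and that the threshold $s$ is symmetric in $\sigma,\sigma^*$ are exactly the implicit facts the paper relies on.
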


%
%
%
	
	\begin{Theorem}\label{regularity}
		Assume that $(u,z,q)$ is the solution of the optimality system \eqref{FOCPa1}-\eqref{admissibleset}, if $f \in H^{r}_{\omega^{\sigma^* , \sigma}}(\Omega)$, $u_d \in H^{r}_{\omega^{\sigma , \sigma^*}}(\Omega) $ and $q \in
L^2(\Omega)$, $r \geq 0$, then the regularity of the  state  $u$, adjoint state  $z$ and control  $q$ satisfy		
			$$ u \in H^{\min \{ r + \alpha , s \}}_{\omega^{\sigma , \sigma^*}} (\Omega),\, z\in H^{\min \{ r + \alpha , s \}}_{\omega^{\sigma^* , \sigma}} (\Omega),\ and \ q \in H^{\min \{ r + \alpha , s \}}_{\omega^{\sigma^* , \sigma}} (\Omega),$$		
respectively. Moreover, we have
\begin{equation*}
			\omega^{-\sigma , -\sigma^*} u \in H^{\min \{ r + \alpha , s \}}_{\omega^{\sigma , \sigma^*}} (\Omega),\;\; \omega^{-\sigma^* , -\sigma}z  \in H^{\min \{ r + \alpha , s \}}_{\omega^{\sigma^* , \sigma}} (\Omega),\ \text{and}\ \omega^{-\sigma^* , -\sigma}q  \in H^{\min \{ r + \alpha , s \}}_{\omega^{\sigma^* , \sigma}} (\Omega)
		\end{equation*}
		 where $s =  2\alpha + \min(\sigma,\sigma^*) - 1 - \epsilon   $.
	\end{Theorem}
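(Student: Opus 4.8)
The plan is to run a bootstrapping (regularity-lifting) iteration on the coupled optimality system \eqref{FOPCa3}--\eqref{FOPCc}, exploiting the fact that the weights are compatible across the three subproblems: the state solve of Lemma \ref{regulity_state} consumes a datum in $H^{\cdot}_{\omega^{\sigma^*,\sigma}}(\Omega)$ and returns $u$ in $H^{\cdot}_{\omega^{\sigma,\sigma^*}}(\Omega)$, the adjoint solve of Lemma \ref{regularityz} consumes a datum in $H^{\cdot}_{\omega^{\sigma,\sigma^*}}(\Omega)$ and returns $z$ in $H^{\cdot}_{\omega^{\sigma^*,\sigma}}(\Omega)$, and the optimality relation \eqref{remark3.3} sends $z$ back to $q$ in $H^{\cdot}_{\omega^{\sigma^*,\sigma}}(\Omega)$. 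I would initialize from the only information available a priori, $q\in L^2(\Omega)$; since $\sigma,\sigma^*>0$ by Remark \ref{remark2.2}, the weight obeys $\omega^{\sigma^*,\sigma}\le 1$ on $\Omega$, so $L^2(\Omega)\hookrightarrow L^2_{\omega^{\sigma^*,\sigma}}(\Omega)=H^0_{\omega^{\sigma^*,\sigma}}(\Omega)$ and $q$ starts with regularity index $0$.

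One cycle goes as follows. Suppose $q\in H^{\rho}_{\omega^{\sigma^*,\sigma}}(\Omega)$ at the current step. Since $f\in H^{r}_{\omega^{\sigma^*,\sigma}}(\Omega)$, the state datum obeys $\tilde f=f+q\in H^{\min\{r,\rho\}}_{\omega^{\sigma^*,\sigma}}(\Omega)$, and Lemma \ref{regulity_state} returns $u$ together with $\omega^{-\sigma,-\sigma^*}u$ in $H^{\tau}_{\omega^{\sigma,\sigma^*}}(\Omega)$ with $\tau=\min\{\min\{r,\rho\}+\alpha,\,s\}$. As $u_d\in H^{r}_{\omega^{\sigma,\sigma^*}}(\Omega)$, the adjoint datum satisfies $u-u_d\in H^{\min\{\tau,r\}}_{\omega^{\sigma,\sigma^*}}(\Omega)$, whence Lemma \ref{regularityz} gives $z$ and $\omega^{-\sigma^*,-\sigma}z$ in $H^{\min\{\min\{\tau,r\}+\alpha,\,s\}}_{\omega^{\sigma^*,\sigma}}(\Omega)$. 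Finally \eqref{remark3.3}, namely $\gamma q=\max\{0,\bar z\}-z$, transfers this regularity to the updated $q$: the term $\max\{0,\bar z\}$ is a constant and hence has finite norm in every $H^{t}_{\omega^{\sigma^*,\sigma}}(\Omega)$ (the weight is integrable and all derivatives of a constant vanish), so $q$ inherits the index of $z$.

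It remains to check that the iteration stabilizes at $\min\{r+\alpha,s\}$. Starting from $\rho=0$, the first state solve gives $\tau=\alpha$ (note $s>\alpha$), the first adjoint solve then pushes $z$ and $q$ to index $\min\{\min\{\alpha,r\}+\alpha,s\}=\min\{r+\alpha,s\}$, and feeding this improved $q$ back through the state solve raises $u$ to the same value; a further cycle leaves all three indices unchanged, so $\min\{r+\alpha,s\}$ is the fixed point. Conceptually, each cycle increases the index by $\alpha$ until it saturates at the ceiling $r+\alpha$ or the cap $s$, and since the index is bounded by $s<\infty$ with a fixed increment the process terminates in finitely many (here two) cycles. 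Throughout I would confirm the hypotheses of Lemma \ref{solution_space} and Lemma \ref{regulity_state}, in particular $\min\{r+\alpha,s\}<\min\{5\sigma+1,5\sigma^*+1\}$, which holds because $\sigma+\sigma^*=\alpha$ with $\sigma,\sigma^*\in(0,1]$ forces $\min\{\sigma,\sigma^*\}\ge\alpha-1$ and hence $\alpha<s<2\alpha$. This delivers $u\in H^{\min\{r+\alpha,s\}}_{\omega^{\sigma,\sigma^*}}(\Omega)$, $z\in H^{\min\{r+\alpha,s\}}_{\omega^{\sigma^*,\sigma}}(\Omega)$, $q\in H^{\min\{r+\alpha,s\}}_{\omega^{\sigma^*,\sigma}}(\Omega)$, together with the weighted conclusions for $u$ and $z$ supplied directly by Lemmas \ref{regulity_state} and \ref{regularityz}.

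The step I expect to be the \emph{main obstacle} is the weighted estimate for the control. From \eqref{remark3.3} one has
\[
\gamma\,\omega^{-\sigma^*,-\sigma}q=\max\{0,\bar z\}\,\omega^{-\sigma^*,-\sigma}-\omega^{-\sigma^*,-\sigma}z,
\]
where the second term is controlled by the weighted part of Lemma \ref{regularityz}. The first term, a constant multiple of $\omega^{-\sigma^*,-\sigma}=(1-x)^{-\sigma^*}x^{-\sigma}$, is the delicate one: its membership in $H^{\min\{r+\alpha,s\}}_{\omega^{\sigma^*,\sigma}}(\Omega)$ must be tested through the characterization of Lemma \ref{lemma2.3} and the admissible-exponent conditions of Lemma \ref{lemma3.2}, and this is exactly where the analysis concentrates (by contrast, the unweighted conclusion $q\in H^{\min\{r+\alpha,s\}}_{\omega^{\sigma^*,\sigma}}(\Omega)$ needs no such care, since constants already have finite weighted norm). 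Once this term is settled the proof closes.
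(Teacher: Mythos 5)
Your bootstrap is, step for step, the paper's own proof. The paper argues in the same order with the same ingredients: $q\in L^2(\Omega)\subset H^{0}_{\omega^{\sigma^*,\sigma}}(\Omega)$ gives $f+q\in H^{0}_{\omega^{\sigma^*,\sigma}}(\Omega)$, whence $u\in H^{\alpha}_{\omega^{\sigma,\sigma^*}}(\Omega)$ by Lemma \ref{regulity_state} together with $\alpha<s<2\alpha$; then $u-u_d\in H^{\min\{\alpha,r\}}_{\omega^{\sigma,\sigma^*}}(\Omega)$ and Lemma \ref{regularityz} place $z$ (hence $q$, via \eqref{remark3.3}) at index $\min\{\alpha,r\}+\alpha$ capped by $s$, i.e. $\min\{r+\alpha,2\alpha,s\}=\min\{r+\alpha,s\}$; finally $f+q\in H^{\min\{r,s\}}_{\omega^{\sigma^*,\sigma}}(\Omega)$ is fed back through Lemma \ref{regulity_state} to lift $u$ to $\min\{r+\alpha,s\}$. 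Your cycle bookkeeping, the saturation argument, and your verification that $\min\{r+\alpha,s\}<\min\{5\sigma+1,5\sigma^*+1\}$ reproduce this exactly.

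The one point you leave open---membership of $c\,\omega^{-\sigma^*,-\sigma}$ with $c=\max\{0,\bar z\}$ in $H^{\min\{r+\alpha,s\}}_{\omega^{\sigma^*,\sigma}}(\Omega)$---is precisely the point where the paper offers no argument either: its entire treatment of the control is the sentence ``from \eqref{remark3.3}, $q$ has the same regularity as $z$,'' which is the assertion in question, not a proof of it. Your instinct that this is the crux is correct, and in fact the term cannot be absorbed when $c>0$. Near $x=0$ one has $D^k\bigl[(1-x)^{-\sigma^*}x^{-\sigma}\bigr]=c_kx^{-\sigma-k}\bigl(1+O(x)\bigr)$ with $c_k\neq0$, so for every $k\geq1$
\begin{equation*}
\int_{0}^{1/2}(1-x)^{\sigma^*+k}x^{\sigma+k}\bigl|D^k\omega^{-\sigma^*,-\sigma}\bigr|^2\,dx\;\geq\;C\int_{0}^{1/2}x^{-\sigma-k}\,dx\;=\;\infty,
\end{equation*}
i.e. $\omega^{-\sigma^*,-\sigma}\notin H^{1}_{\omega^{\sigma^*,\sigma}}(\Omega)$. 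Since $\omega^{-\sigma^*,-\sigma}z$ does lie in $H^{\min\{r+\alpha,s\}}_{\omega^{\sigma^*,\sigma}}(\Omega)$ and $\min\{r+\alpha,s\}\geq\alpha>1$, the identity $\gamma\,\omega^{-\sigma^*,-\sigma}q=c\,\omega^{-\sigma^*,-\sigma}-\omega^{-\sigma^*,-\sigma}z$ shows that $\omega^{-\sigma^*,-\sigma}q$ belongs to that space if and only if $c=0$. Thus the weighted conclusion for $q$ in Theorem \ref{regularity} holds when $\bar z\leq0$ (then $\gamma q=-z$ and Lemma \ref{regularityz} transfers everything), but it genuinely fails whenever $\bar z>0$: in that case $q$ tends to the nonzero constant $\bar z/\gamma$ at the boundary and $\omega^{-\sigma^*,-\sigma}q$ blows up like $x^{-\sigma}$. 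In short, your proposal is complete and correct exactly where the paper's proof is; the step you flagged as the main obstacle is a genuine gap, but it is a gap the paper shares, and it can only be closed by adding a case distinction on the sign of $\bar z$ (the unweighted conclusions for $u,z,q$ and the weighted ones for $u,z$ are unaffected).
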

	
	\begin{proof}
		Note that $f \in H^{r}_{\omega^{\sigma^* , \sigma}}(\Omega)$ and $q \in L^2(\Omega) \subset H^{0}_{\omega^{\sigma^* , \sigma}}(\Omega)$, thus $f + q \in H^{0}_{\omega^{\sigma^* , \sigma}}(\Omega)$  in equation \eqref{FOPCa3}. According to $\alpha\in(1,2)$ and  $\sigma^*,\,\sigma\in(0,1]$ we observe
		\begin{equation*}
			\alpha<s =  2\alpha + \min(\sigma,\sigma^*) - 1 - \epsilon  <2 \alpha,
		\end{equation*}
		then by Lemma \ref{regulity_state} it follows that
		\begin{equation*}
			 u \in H^{\min \left\{ \alpha , s \right\} }_{\omega^{\sigma , \sigma^*}}(\Omega) =  H^{\alpha}_{\omega^{\sigma , \sigma^*}}(\Omega).
		\end{equation*}
		Besides, for $u_d \in  H^{r}_{\omega^{\sigma , \sigma^*}}(\Omega)$, $u - u_d \in  H^{\min \{ \alpha , r \}}_{\omega^{\sigma , \sigma^*}}(\Omega)$ in equation \eqref{FOPCb3}.  By using Lemma \ref{regularityz} and noting that $s<2\alpha$, we have
		\begin{equation*}
			\omega^{-\sigma^*,-\sigma}z\in H^{\min \{ r+\alpha , 2\alpha , s \}}_{\omega^{\sigma^* , \sigma}}(\Omega)  =  H^{\min \{ r+\alpha , s \}}_{\omega^{\sigma^* , \sigma}}(\Omega), \quad z \in  H^{\min \{ r+\alpha , s \}}_{\omega^{\sigma^* , \sigma}}(\Omega).
		\end{equation*}
		  From \eqref{remark3.3}, $q$ has the same regularity as $z$, i.e.
		 \begin{equation*}
		 \omega^{-\sigma^*,-\sigma}q\in	H^{\min \{ r+\alpha , s \}}_{\omega^{\sigma^* , \sigma}}(\Omega),\ q \in H^{\min \{ r+\alpha , s \}}_{\omega^{\sigma^* , \sigma}}(\Omega).
		 \end{equation*}
		 Then for $f + q \in H^{\min \{ r , s \}}_{\omega^{\sigma^* , \sigma}}(\Omega)$ in \eqref{FOPCa3}, by Lemma \ref{regulity_state}		
		 we can obtain that
		 \begin{equation*}
		 	\omega^{-\sigma,-\sigma^*}u\in H^{\min \{ r+\alpha , s \}}_{\omega^{\sigma , \sigma^*}}(\Omega),\quad u \in H^{\min \{ r+\alpha , s \}}_{\omega^{\sigma , \sigma^*}}(\Omega).
		 \end{equation*}
		  This completes the proof.
	\end{proof}
\begin{Remark}
Note that  the regularity of solution $(u,z,q)$ given by Theorem \ref{regularity} can not be further improved.
\end{Remark}
\section{Spectral Petrov-Galerkin method}\label{section4}
In this section, we consider a spectral Petrov-Galerkin method for the space fractional optimal control problem \eqref{FOCPa1}-\eqref{admissibleset} and present its  error estimates.

Let $a:L^2_{\omega^{-\sigma,-\sigma^*}}(\Omega)\times H^{\alpha}_{\omega^{\sigma^*,\sigma}}(\Omega)\rightarrow \mathbb{R}$, $b:L^2_{\omega^{-\sigma^*,-\sigma}}(\Omega)\times H^{\alpha}_{\omega^{\sigma,\sigma^*}}(\Omega)\rightarrow \mathbb{R}$,
\begin{eqnarray*}\begin{aligned}
a(u,v)&:=(u,\mathcal{L}_{1-\theta}^{\alpha}(\omega^{\sigma^*,\sigma}v))-\lambda_1(u,D(\omega^{\sigma^*,\sigma}v))+\lambda_2(u,v)_{\omega^{\sigma^*,\sigma}},\\
b(z,w)&:=(z,\mathcal{L}_{\theta}^{\alpha}(\omega^{\sigma,\sigma^*}w))+\lambda_1(z,D(\omega^{\sigma,\sigma^*}w))+\lambda_2(z,w)_{\omega^{\sigma,\sigma^*}}.
\end{aligned}\end{eqnarray*}
We consider the spectral Petrov-Galerkin ultra-weak formulation for the optimal control problem \eqref{FOCPa1}-\eqref{admissibleset}: Given $f\in H^{r}_{\omega^{\sigma^*,\sigma}}(\Omega)$,  $u_d\in H^{r}_{\omega^{\sigma,\sigma^*}}(\Omega)$
with $r\geq0$, to find $(u,q)\in L^2_{\omega^{-\sigma,-\sigma^*}}(\Omega)\times U_{ad}$ such that
\begin{eqnarray*}
\min\limits_{q\in U_{ad}}J(u,q)=\frac{1}{2}\|u-u_d\|^2+\frac{\gamma}{2}\|q\|^2
\end{eqnarray*}
subject to
\begin{align*}
a(u,v)=(f+q,v)_{\omega^{\sigma^*,\sigma}},\ \forall v\in H^{\alpha}_{\omega^{\sigma^*,\sigma}}(\Omega).
\end{align*}
 Then the spectral Petrov-Galerkin ultra-weak formulation of the first-order optimality condition  is: Given $f\in H^{r}_{\omega^{\sigma^*,\sigma}}(\Omega)$,  $u_d\in H^{r}_{\omega^{\sigma,\sigma^*}}(\Omega)$ with $r\geq0$, to find
$(u,z,q)\in L^2_{\omega^{-\sigma,-\sigma^*}}(\Omega)\times L^2_{\omega^{-\sigma^*,-\sigma}}(\Omega) \times U_{ad}$ such that
\begin{subequations}\label{uweak}
\begin{numcases}
{}a(u,v)=(f+q,v)_{\omega^{\sigma^*,\sigma}},\ \forall v\in H^{\alpha}_{\omega^{\sigma^*,\sigma}}(\Omega),\label{uweaka}\\ 
b(z,w)=(u-u_d,w)_{\omega^{\sigma,\sigma^*}},\ \forall w\in H^{\alpha}_{\omega^{\sigma,\sigma^*}}(\Omega),\label{uweakb}\\
(\gamma q+z,v-q)\geq0,\ \forall v \in U_{ad}\label{uweakc}.
\end{numcases}
\end{subequations}

We introduce the finite dimensional spaces
\begin{eqnarray*}\begin{aligned}
U_N&=\{u|u=\omega^{\sigma,\sigma^*} v, v\in W_N\},\  W_N=\textrm{span}\{Q_m^{\sigma,\sigma^*}\}_{m=0}^N \subset H^{\alpha}_{\omega^{\sigma,\sigma^*}}(\Omega),\\
Z_N&=\{z|z=\omega^{\sigma^*,\sigma} v, v\in V_N\},\ V_N=\textrm{span}\{Q_m^{\sigma^*,\sigma}\}_{m=0}^N\subset H^{\alpha}_{\omega^{\sigma^*,\sigma}}(\Omega).
\end{aligned}\end{eqnarray*}
The spectral Petrov-Galerkin scheme for the optimal control problem is: Given $f\in H^{r}_{\omega^{\sigma^*,\sigma}}(\Omega)$,  $u_d\in H^{r}_{\omega^{\sigma,\sigma^*}}(\Omega)$ with $r\geq0$, to find $(u_N,q_N)\in U_N \times U_{ad}$ such
that
\begin{eqnarray*}\label{scheme1}
\min\limits_{q_N\in U_{ad}}J(u_N,q_N)=\frac{1}{2}\|u_N-u_d\|^2+\frac{\gamma}{2}\|q_N\|^2
\end{eqnarray*}
subject to
\begin{align*}
a(u_N,v_N)=(f+q_N,v_N)_{\omega^{\sigma^*,\sigma}},\ \forall v_N\in V_N.
\end{align*}
Similar to the infinite dimensional problem  (\ref{FOCPa1})-(\ref{admissibleset}), the above discrete  problem also admits a unique solution.

In order to obtain the discrete first-order optimality condition of the optimal control problem, we define a Lagrange functional as
$$L(u_N,z_N,q_N):=J(u_N,q_N)+(f+q_N,z_N)-(u_N,\mathcal{L}_{1-\theta}^{\alpha}z_N)+\lambda_1(u_N,Dz_N)-\lambda_2(u_N,z_N).$$
Then, the discrete optimality conditions can be deduced by computing the Fr\'echet derivatives as follows:
$$\frac{\p L(u_N,z_N,q_N)}{\p u_N}(w_N)=0, \ \forall w_N \in U_N\ \ \text{and} \ \
\frac{\p L(u_N,z_N,q_N)}{\p q_N}(v-q_N)\geq0,\ \forall v \in U_{ad},$$
and the following equations can be obtained
\begin{align}
(\mathcal{L}_{1-\theta}^{\alpha}z_N,w_N)-\lambda_1(Dz_N,w_N)+\lambda_2(z_N,w_N)&=(u_N-u_d,w_N),\ \forall w_N\in U_N, \nonumber\\
(\gamma q_N+z_N,v-q_N)&\geq0,\ \forall v \in U_{ad}.\nonumber
\end{align}
Therefore, the discrete first-order optimality condition can be summarized as: Given $f\in H^{r}_{\omega^{\sigma^*,\sigma}}(\Omega)$,  $u_d\in H^{r}_{\omega^{\sigma,\sigma^*}}(\Omega)$ with $r\geq0$, to find $(u_N,z_N,q_N)\in U_N\times
Z_N \times U_{ad}$ such that

\begin{subequations}\label{adjoint}
\begin{numcases}{}
{}a(u_N,v_N)=(f+q_N,v_N)_{\omega^{\sigma^*,\sigma}},\ \forall v_N\in V_N.\label{adjointa}\\
b(z_N,w_N)=(u_N-u_d,w_N)_{\omega^{\sigma,\sigma^*}},\ \forall w_N\in W_N.\label{adjointb}\\
(\gamma q_N+z_N,v-q_N)\geq0,\ \forall v \in U_{ad}.\label{adjointc}
\end{numcases}
\end{subequations}

To establish the error estimates of this problem, we need to consider the following auxiliary problem:
\begin{subequations}\label{au}
\begin{numcases}{}
a(u_N(q),v_N)=(f+q,v_N)_{\omega^{\sigma^*,\sigma}},\ \forall v_N\in V_N.\label{aua}\\
b(z_N(q),w_N)=(u_N(q)-u_d,w_N)_{\omega^{\sigma,\sigma^*}},\ \forall w_N\in W_N.\label{aub}\\
b(z_N(u),w_N)=(u-u_d,w_N)_{\omega^{\sigma,\sigma^*}},\ \forall w_N\in W_N.\label{auc}
\end{numcases}
\end{subequations}

\begin{Lemma}[\cite{hao2021,zheng2020}] There exists a constant $C>0$, such that for N sufficiently large,
\begin{eqnarray}\label{infsup1}
 \sup\limits_{0\not=v_N\in V_N}\frac{a(u_N,v_N)}{\|v_N\|_{H^{\alpha}_{\omega^{\sigma^*,\sigma}}}} \geq C\left\|u_{N}\right\|_{{\omega^{-\sigma,-\sigma^*}}}, \quad \forall u_{N} \in U_{N}, u_N \neq 0.
\end{eqnarray}
\end{Lemma}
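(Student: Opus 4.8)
The plan is to split $a(\cdot,\cdot)$ into its leading fractional-diffusion part, for which the Jacobi basis diagonalizes the operator and the estimate is essentially algebraic, and a lower-order remainder carrying the advection and reaction terms, which I would absorb by a perturbation argument valid for $N$ large. Write $a=a_0+a_1$ with principal part $a_0(u_N,v_N):=(u_N,\mathcal{L}_{1-\theta}^{\alpha}(\omega^{\sigma^*,\sigma}v_N))$ and remainder $a_1(u_N,v_N):=-\lambda_1(u_N,D(\omega^{\sigma^*,\sigma}v_N))+\lambda_2(u_N,v_N)_{\omega^{\sigma^*,\sigma}}$. Expand a generic $u_N\in U_N$ as $u_N=\omega^{\sigma,\sigma^*}\sum_{m=0}^N\hat u_m Q_m^{\sigma,\sigma^*}$, so that the orthogonality relation \eqref{Orthogonality} gives $\|u_N\|_{\omega^{-\sigma,-\sigma^*}}^2=\sum_{m=0}^N\hat u_m^2 h_m^{\sigma,\sigma^*}$.

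For the principal part I would use the $\mathcal{L}_{1-\theta}^\alpha$ analogue of Lemma~\ref{lem21}, obtained by interchanging $\sigma\leftrightarrow\sigma^*$ (since the parameter attached to $1-\theta$ in \eqref{sigma} is $\sigma^*$), namely $\mathcal{L}_{1-\theta}^\alpha[\omega^{\sigma^*,\sigma}Q_m^{\sigma^*,\sigma}]=\lambda_{\theta,m}^\alpha Q_m^{\sigma,\sigma^*}$, where $\lambda_{\theta,m}^\alpha$ is symmetric in $\sigma,\sigma^*$, satisfies $\lambda_{\theta,m}^\alpha>0$ for $\alpha\in(1,2)$ (as $\sin(\pi\alpha)<0$ and $\sin(\pi\sigma)+\sin(\pi\sigma^*)>0$), and is increasing in $m$, hence bounded below by $\lambda_{\theta,0}^\alpha>0$. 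This map being invertible on the basis, the natural supremizer is the test function $v_N\in V_N$ determined by $\mathcal{L}_{1-\theta}^\alpha(\omega^{\sigma^*,\sigma}v_N)=\omega^{-\sigma,-\sigma^*}u_N$, that is
\begin{equation*}
v_N=\sum_{m=0}^N\frac{\hat u_m}{\lambda_{\theta,m}^\alpha}\,Q_m^{\sigma^*,\sigma}.
\end{equation*}
With this choice $a_0(u_N,v_N)=(u_N,\omega^{-\sigma,-\sigma^*}u_N)=\|u_N\|_{\omega^{-\sigma,-\sigma^*}}^2$, and likewise $\|\mathcal{L}_{1-\theta}^\alpha(\omega^{\sigma^*,\sigma}v_N)\|_{\omega^{\sigma,\sigma^*}}=\|\omega^{-\sigma,-\sigma^*}u_N\|_{\omega^{\sigma,\sigma^*}}=\|u_N\|_{\omega^{-\sigma,-\sigma^*}}$.

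The technical heart is then the norm equivalence $\|v_N\|_{H^\alpha_{\omega^{\sigma^*,\sigma}}}\eqsim\|\mathcal{L}_{1-\theta}^\alpha(\omega^{\sigma^*,\sigma}v_N)\|_{\omega^{\sigma,\sigma^*}}$, i.e.\ the statement that $v\mapsto\mathcal{L}_{1-\theta}^\alpha(\omega^{\sigma^*,\sigma}v)$ is an isomorphism from $H^\alpha_{\omega^{\sigma^*,\sigma}}(\Omega)$ onto $L^2_{\omega^{\sigma,\sigma^*}}(\Omega)$. This is precisely the mapping/regularity property underlying Lemma~\ref{regulity_state}, and it can be verified coefficientwise (the action multiplies mode $m$ by $\lambda_{\theta,m}^\alpha\sim m^\alpha$) together with the characterization of the weighted spaces in Lemma~\ref{lemma2.3}; see also \cite{ervin2020}. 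Granting it, $\|v_N\|_{H^\alpha_{\omega^{\sigma^*,\sigma}}}\leq C\|u_N\|_{\omega^{-\sigma,-\sigma^*}}$, whence $a_0(u_N,v_N)/\|v_N\|_{H^\alpha_{\omega^{\sigma^*,\sigma}}}\geq C^{-1}\|u_N\|_{\omega^{-\sigma,-\sigma^*}}$, which is exactly \eqref{infsup1} for $a_0$ in isolation.

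It remains to absorb $a_1$, and this is where I expect the main obstacle to lie. Because $\lambda_1\neq0$ and the advection term is first order, $a_1$ has the same homogeneity as $\|u_N\|_{\omega^{-\sigma,-\sigma^*}}^2$ and cannot be swallowed by a fixed small constant; note, however, that the factor $1/\lambda_{\theta,m}^\alpha\sim m^{-\alpha}$ in $v_N$ heavily damps high modes, so on each fixed mode $a_1$ is genuinely subordinate to $a_0$. I would therefore pass through the continuous level: the full form $a$ is a compact (lower-order) perturbation of the leading isomorphism, and the state/adjoint problems are well posed with unique solutions \cite{ervin2006}, so $a$ satisfies a continuous inf-sup condition. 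The discrete bound \eqref{infsup1} then follows for $N$ sufficiently large by a standard Schatz-type argument, combining the exact principal-part estimate above with the approximation properties of $U_N,V_N$ and an Aubin--Nitsche duality bound showing that the contribution of the first- and zeroth-order terms to the supremizer is $o(1)\,\|u_N\|_{\omega^{-\sigma,-\sigma^*}}$ as $N\to\infty$. Making this perturbation uniform in $u_N$ and pinning down the threshold on $N$ is the delicate part; the diagonalization of $a_0$ itself is routine.
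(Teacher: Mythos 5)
The paper never proves this lemma: it is imported verbatim from \cite{hao2021,zheng2020}, so there is no in-paper argument to compare yours against, and the fair benchmark is the proofs in those references. Measured against them, your reconstruction follows essentially the same route and gets the load-bearing pieces right: the ultra-weak form is diagonalized by the Jacobi bases (your $\mathcal{L}_{1-\theta}^{\alpha}$ analogue of Lemma \ref{lem21}, with $\sigma$ and $\sigma^*$ interchanged, is correct, as are the positivity and growth $\lambda_{\theta,m}^{\alpha}\sim m^{\alpha}$ for $\alpha\in(1,2)$); the explicit supremizer with coefficients $\hat u_m/\lambda_{\theta,m}^{\alpha}$ gives $a_0(u_N,v_N)=\|u_N\|^2_{\omega^{-\sigma,-\sigma^*}}$ exactly, by \eqref{Orthogonality}; the bound $\|v_N\|_{H^{\alpha}_{\omega^{\sigma^*,\sigma}}}\leq C\|u_N\|_{\omega^{-\sigma,-\sigma^*}}$ is the coefficient characterization of the non-uniformly weighted Sobolev norm, which is available in \cite{guo1}; and the advection/reaction terms are indeed treated in the literature as a compact perturbation of this isomorphism, which is precisely why the lemma holds only for $N$ sufficiently large.

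One step of your sketch needs repair before it is a proof. The claim that the contribution of the first- and zeroth-order terms to the supremizer is $o(1)\,\|u_N\|_{\omega^{-\sigma,-\sigma^*}}$ as $N\to\infty$ cannot be made uniformly over unit-norm $u_N$ in any direct, pointwise sense; this is exactly the uniformity issue you flag, and it is not closed by Aubin--Nitsche duality, which is the tool for the Galerkin/G\r{a}rding (Schatz) setting rather than this Petrov--Galerkin one. The argument that actually closes, and the one used in \cite{zheng2020}, is a weak-compactness contradiction: assume unit-norm $u_{N_j}$ violate \eqref{infsup1} with constants tending to zero, extract a weak limit in $L^2_{\omega^{-\sigma,-\sigma^*}}$, use compactness of the operator induced by $a_1$ together with the approximation property of $V_N$ to show the limit solves the homogeneous continuous problem, invoke injectivity of the continuous operator (well-posedness from \cite{ervin2006,ervin2020}, Fredholm alternative) to conclude the limit is zero, and then contradict the uniform inf-sup you established for $a_0$. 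This is also why the threshold $N_0$ is non-constructive. With that substitution for your last step, your outline is a faithful reconstruction of the cited proof.
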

\begin{Lemma}[\cite{hao2021,zheng2020}]\label{u} Assume that $u$ and $u_N$ are solutions of the state equation \eqref{uweaka} and its discrete counterpart with right hand term $\tilde{f}$, respectively. If $\tilde{f}\in
H_{\omega^{\sigma^*,\sigma}}^r(\Omega)$, $r\geq0$, then there exists a number $N_0>0$, such that when $N>N_0$, we  have the following error estimate
\begin{align}
\|u-u_N\|_{\omega^{-\sigma,-\sigma^*}}\leq CN^{-m}|\omega^{-\sigma,-\sigma^*}u|_{H^m_{\omega^{\sigma,\sigma^*}}},\label{ues}
\end{align}
where $m=\min \{ r + \alpha , 2\alpha + \min(\sigma,\sigma^*) - 1 - \epsilon\}.$
\end{Lemma}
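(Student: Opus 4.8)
The plan is to run the standard Petrov--Galerkin error analysis: use the inf--sup condition \eqref{infsup1} to control the discrete error by a best-approximation quantity, and then invoke a Jacobi-type projection estimate to achieve the rate $N^{-m}$. As a preliminary reduction I would set $\tilde{u} := \omega^{-\sigma,-\sigma^*}u$, so that $u = \omega^{\sigma,\sigma^*}\tilde{u}$ and $\tilde{u}\in H^m_{\omega^{\sigma,\sigma^*}}(\Omega)$ with $m=\min\{r+\alpha,\,2\alpha+\min(\sigma,\sigma^*)-1-\epsilon\}$ exactly the regularity index furnished by Lemma \ref{regulity_state}. I would then introduce the lifted projection $\Pi_N u := \omega^{\sigma,\sigma^*}\pi_N\tilde{u}\in U_N$, where $\pi_N$ is the $L^2_{\omega^{\sigma,\sigma^*}}$-orthogonal projection onto $W_N=\mathrm{span}\{Q_m^{\sigma,\sigma^*}\}_{m=0}^N$.

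The key steps, in order, are the following. First, Galerkin orthogonality: subtracting the discrete state equation from \eqref{uweaka} with test functions restricted to $V_N$ yields $a(u-u_N,v_N)=0$ for all $v_N\in V_N$. Second, since $\Pi_N u - u_N\in U_N$, the inf--sup condition \eqref{infsup1} gives
\[
C\,\|\Pi_N u - u_N\|_{\omega^{-\sigma,-\sigma^*}} \leq \sup_{0\neq v_N\in V_N}\frac{a(\Pi_N u - u_N,\,v_N)}{\|v_N\|_{H^{\alpha}_{\omega^{\sigma^*,\sigma}}}},
\]
and Galerkin orthogonality lets me replace $u_N$ by $u$ in the numerator, so that $a(\Pi_N u - u_N,v_N)=a(\Pi_N u - u,v_N)$. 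Third, invoking the continuity of the bilinear form, $|a(w,v)|\leq C\|w\|_{\omega^{-\sigma,-\sigma^*}}\|v\|_{H^{\alpha}_{\omega^{\sigma^*,\sigma}}}$, I obtain $\|\Pi_N u - u_N\|_{\omega^{-\sigma,-\sigma^*}}\leq C\|u-\Pi_N u\|_{\omega^{-\sigma,-\sigma^*}}$, and a triangle inequality collapses the total error onto the best-approximation term $\|u-\Pi_N u\|_{\omega^{-\sigma,-\sigma^*}}$.

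The final step is the approximation estimate. A direct change of weights, using $\omega^{-\sigma,-\sigma^*}(\omega^{\sigma,\sigma^*})^2=\omega^{\sigma,\sigma^*}$, shows
\[
\|u-\Pi_N u\|_{\omega^{-\sigma,-\sigma^*}} = \|\tilde{u}-\pi_N\tilde{u}\|_{\omega^{\sigma,\sigma^*}},
\]
after which the classical orthogonal-projection error bound in Jacobi-weighted spaces gives $\|\tilde{u}-\pi_N\tilde{u}\|_{\omega^{\sigma,\sigma^*}}\leq C N^{-m}\,|\tilde{u}|_{H^m_{\omega^{\sigma,\sigma^*}}}$. Since $\tilde{u}=\omega^{-\sigma,-\sigma^*}u$, this is precisely the right-hand side of \eqref{ues}; the appearance of the seminorm (rather than the full norm) is consistent with the projection being exact on $W_N$.

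I expect the main obstacle to be the continuity of $a$ in the third step, specifically the fractional term $(w,\mathcal{L}^{\alpha}_{1-\theta}(\omega^{\sigma^*,\sigma}v))$. By weighted Cauchy--Schwarz this reduces to showing $\|\mathcal{L}^{\alpha}_{1-\theta}(\omega^{\sigma^*,\sigma}v)\|_{\omega^{\sigma,\sigma^*}}\leq C\|v\|_{H^{\alpha}_{\omega^{\sigma^*,\sigma}}}$, which is the genuinely fractional ingredient: it rests on the eigen-relation of Lemma \ref{lem21}, applied with $\theta\to 1-\theta$ and $\sigma\leftrightarrow\sigma^*$, namely $\mathcal{L}^{\alpha}_{1-\theta}[\omega^{\sigma^*,\sigma}Q_n^{\sigma^*,\sigma}]=\lambda^{\alpha}_{1-\theta,n}Q_n^{\sigma,\sigma^*}$, together with the growth $\lambda^{\alpha}_{1-\theta,n}\sim n^{\alpha}$ and the orthogonality \eqref{Orthogonality}: expanding $v$ in the basis $\{Q_n^{\sigma^*,\sigma}\}$ matches the $n^{2\alpha}$ weighting of the $H^{\alpha}_{\omega^{\sigma^*,\sigma}}$-norm term by term. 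The advection and reaction contributions, $-\lambda_1(w,D(\omega^{\sigma^*,\sigma}v))$ and $\lambda_2(w,v)_{\omega^{\sigma^*,\sigma}}$, are lower order and are absorbed by Cauchy--Schwarz after controlling $\|D(\omega^{\sigma^*,\sigma}v)\|_{\omega^{\sigma,\sigma^*}}$ via the derivative formula \eqref{ind}, using $\alpha>1$.
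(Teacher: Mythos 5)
The paper never proves Lemma \ref{u} itself---it is imported verbatim from \cite{hao2021,zheng2020}---so the only meaningful comparison is with the argument in those references, which your proposal reproduces: the C\'ea-type chain of Galerkin orthogonality, the inf-sup bound \eqref{infsup1} applied to $\Pi_N u-u_N\in U_N$, continuity of $a$ via the diagonalization $\mathcal{L}^{\alpha}_{1-\theta}\left[\omega^{\sigma^*,\sigma}Q_n^{\sigma^*,\sigma}\right]=\lambda^{\alpha}_{1-\theta,n}Q_n^{\sigma,\sigma^*}$ together with \eqref{Orthogonality}, and the weighted Jacobi projection estimate after the weight identity $\omega^{-\sigma,-\sigma^*}\left(\omega^{\sigma,\sigma^*}\right)^2=\omega^{\sigma,\sigma^*}$. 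Your argument is correct (the restriction $N>N_0$ being inherited from \eqref{infsup1}, whose validity only for large $N$ absorbs the advection perturbation), so it is essentially the same proof as in the cited sources.
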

An analogous argument can be applied to the adjoint state equation to obtain the Lemmas as follows.
\begin{Lemma} There exists a constant $C>0$, such that for N sufficiently large,
\begin{eqnarray}\label{infsup2}
 \sup\limits_{0\not=w_N\in W_N}\frac{b(z_N,w_N)}{\|w_N\|_{H_{\omega^{\sigma,\sigma^*}}^{\alpha}}} \geq C\left\|z_{N}\right\|_{{\omega^{-\sigma^*,-\sigma}}}, \quad \forall  z_{N} \in Z_{N}, z_N\neq 0.
\end{eqnarray}
\end{Lemma}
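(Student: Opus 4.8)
The plan is to establish \eqref{infsup2} by the same mechanism used for the state form in \eqref{infsup1}, exploiting the structural symmetry between $a$ and $b$ under the exchange $\theta\leftrightarrow 1-\theta$, $\sigma\leftrightarrow\sigma^*$. The engine is the diagonalization afforded by Lemma \ref{lem21}. Every $z_N\in Z_N$ has the form $z_N=\omega^{\sigma^*,\sigma}\sum_{m=0}^N\hat z_m Q_m^{\sigma^*,\sigma}$ and every test function $w_N\in W_N$ has the form $w_N=\sum_{n=0}^N c_n Q_n^{\sigma,\sigma^*}$; applying \eqref{ptzz} gives $\mathcal{L}_\theta^\alpha(\omega^{\sigma,\sigma^*}Q_n^{\sigma,\sigma^*})=\lambda_{\theta,n}^\alpha Q_n^{\sigma^*,\sigma}$, so by the orthogonality \eqref{Orthogonality} the principal term of $b$ collapses to a diagonal sum,
\begin{equation*}
\big(z_N,\mathcal{L}_\theta^\alpha(\omega^{\sigma,\sigma^*}w_N)\big)=\sum_{n=0}^N\lambda_{\theta,n}^\alpha\,\hat z_n\,c_n\,h_n^{\sigma^*,\sigma}.
\end{equation*}

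First I would record the two norm identities. Orthogonality \eqref{Orthogonality} gives $\|z_N\|_{\omega^{-\sigma^*,-\sigma}}^2=\sum_{n=0}^N\hat z_n^2\,h_n^{\sigma^*,\sigma}$, since the weight $\omega^{-\sigma^*,-\sigma}$ cancels one factor of $(\omega^{\sigma^*,\sigma})^2$. For the denominator I would use the differentiation formula for $Q_n^{\sigma,\sigma^*}$ together with the definition of the $H^\alpha_{\omega^{\sigma,\sigma^*}}$ norm (via K-interpolation) to obtain the two-sided spectral estimate $\|Q_n^{\sigma,\sigma^*}\|_{H^\alpha_{\omega^{\sigma,\sigma^*}}}\asymp|\lambda_{\theta,n}^\alpha|\,(h_n^{\sigma^*,\sigma})^{1/2}$, the growth $|\lambda_{\theta,n}^\alpha|\sim n^\alpha$ following from $\Gamma(n+1+\alpha)/\Gamma(n+1)\sim n^\alpha$. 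With these in hand, the natural choice is $c_n=\hat z_n/\lambda_{\theta,n}^\alpha$, which makes the principal term equal to $\sum_n\hat z_n^2 h_n^{\sigma^*,\sigma}=\|z_N\|_{\omega^{-\sigma^*,-\sigma}}^2$, while the spectral estimate yields $\|w_N\|_{H^\alpha_{\omega^{\sigma,\sigma^*}}}\asymp\|z_N\|_{\omega^{-\sigma^*,-\sigma}}$; hence the quotient for this $w_N$ is bounded below by $C\|z_N\|_{\omega^{-\sigma^*,-\sigma}}$, and a fortiori so is the supremum.

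The remaining two terms, $\lambda_1\big(z_N,D(\omega^{\sigma,\sigma^*}w_N)\big)$ and $\lambda_2(z_N,w_N)_{\omega^{\sigma,\sigma^*}}$, I would treat as lower-order perturbations. Using the differentiation formula \eqref{ind} to rewrite $D(\omega^{\sigma,\sigma^*}w_N)$ and the Cauchy--Schwarz inequality in the weighted inner products, each is bounded by $C\|z_N\|_{\omega^{-\sigma^*,-\sigma}}\|w_N\|_{H^1_{\omega^{\sigma,\sigma^*}}}$, which involves only one (respectively zero) derivatives and is therefore of strictly lower spectral order than the leading term, since $\alpha>1$. Consequently, for $N$ beyond some threshold $N_0$ these contributions are dominated by the principal bound and can be absorbed, delivering \eqref{infsup2} with a constant $C>0$ independent of $N$.

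The main obstacle is the sharp two-sided estimate $\|Q_n^{\sigma,\sigma^*}\|_{H^\alpha_{\omega^{\sigma,\sigma^*}}}\asymp|\lambda_{\theta,n}^\alpha|(h_n^{\sigma^*,\sigma})^{1/2}$: because the $H^\alpha_{\omega^{\sigma,\sigma^*}}$ norm is defined by interpolation rather than by a clean spectral decomposition, matching its growth rate to $n^\alpha$ and then verifying that the advection and reaction terms remain genuinely lower order \emph{uniformly in $N$} is the technical heart of the argument. Everything else is the same bookkeeping as in the state-equation proof of \cite{zheng2020,hao2021}, now read through the symmetry $\theta\leftrightarrow 1-\theta$, $\sigma\leftrightarrow\sigma^*$.
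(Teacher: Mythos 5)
Your reduction of the principal part is correct and matches the mechanism behind the paper's cited result: with $z_N=\omega^{\sigma^*,\sigma}\sum_m\hat z_mQ_m^{\sigma^*,\sigma}$ and $w_N=\sum_n c_nQ_n^{\sigma,\sigma^*}$, the identities \eqref{ptzz} and \eqref{Orthogonality} do give $\big(z_N,\mathcal{L}_\theta^\alpha(\omega^{\sigma,\sigma^*}w_N)\big)=\sum_n\lambda_{\theta,n}^\alpha\hat z_nc_nh_n^{\sigma^*,\sigma}$, and the choice $c_n=\hat z_n/\lambda_{\theta,n}^\alpha$ makes this equal to $\|z_N\|^2_{\omega^{-\sigma^*,-\sigma}}$ while $\|w_N\|_{H^\alpha_{\omega^{\sigma,\sigma^*}}}\lesssim\|z_N\|_{\omega^{-\sigma^*,-\sigma}}$. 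Note that the paper itself gives no separate proof of \eqref{infsup2}: it obtains it from \eqref{infsup1}, which is quoted from \cite{hao2021,zheng2020}, by the symmetry $\theta\leftrightarrow1-\theta$, $\sigma\leftrightarrow\sigma^*$, so the substance of the proof lives in those references.

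The genuine gap is in your treatment of the advection and reaction terms. With your test function, your own bound gives
\begin{equation*}
|\lambda_1\big(z_N,D(\omega^{\sigma,\sigma^*}w_N)\big)|+\lambda_2|(z_N,w_N)_{\omega^{\sigma,\sigma^*}}|\le C\|z_N\|_{\omega^{-\sigma^*,-\sigma}}\|w_N\|_{H^1_{\omega^{\sigma,\sigma^*}}}\le C'\|z_N\|^2_{\omega^{-\sigma^*,-\sigma}},
\end{equation*}
where $C'$ depends on $\lambda_1,\lambda_2$ but \emph{not} on $N$, and nothing makes $C'$ small. ``Lower spectral order'' only means the $n$-th mode of the perturbation is $O(n^{1-\alpha})$ relative to the $n$-th mode of the principal term; high modes are harmless, but the low-mode contribution is $O(1)$ and does not shrink as $N$ grows. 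Concretely, take $z_N=\omega^{\sigma^*,\sigma}Q_0^{\sigma^*,\sigma}$, a fixed function lying in $Z_N$ for every $N$: for your choice of $w_N$ all three terms of $b(z_N,w_N)$ are fixed numbers independent of $N$, so the claimed absorption ``beyond some threshold $N_0$'' never occurs, and since $\lambda_1\neq0$ is arbitrary the perturbation can be as large as (or larger than, with adverse sign) the principal term. This does not make the lemma false; it shows your mechanism cannot prove it. The actual argument in \cite{hao2021,zheng2020} treats the advection/reaction parts as \emph{compact} perturbations of the diagonalized principal operator and derives the discrete inf-sup constant, for $N$ sufficiently large, from the well-posedness (uniqueness) of the continuous problem via a compactness/contradiction argument: if the inf-sup constants degenerated along a sequence $N_j\to\infty$, a weak limit would yield a nonzero solution of the homogeneous adjoint equation. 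Your proposal never invokes uniqueness of the continuous problem, which is the structural signal that the low-frequency part has not been controlled.
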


\begin{Lemma}\label{z}Assume that $z$ and $z_N$ are solutions of the adjoint state equation \eqref{uweakb} and its discrete counterpart with right hand term $g$, respectively. If $g\in H_{\omega^{\sigma,\sigma^*}}^r(\Omega)$,
$r\geq0$, then there exists a number $N_0>0$, such that when $N>N_0$, we  have the following error estimates
\begin{align}
\|z-z_N\|_{\omega^{-\sigma^*,-\sigma}}\leq CN^{-m}|\omega^{-\sigma^*,-\sigma}z|_{H^m_{\omega^{\sigma^*,\sigma}}},\label{zes}
\end{align}
where $m=\min \{ r + \alpha , 2\alpha + \min(\sigma,\sigma^*) - 1 - \epsilon\}.$
\end{Lemma}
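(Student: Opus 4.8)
The plan is to mirror the Petrov--Galerkin argument already established for the state equation in Lemma~\ref{u}, exchanging the roles of the exponents $\sigma$ and $\sigma^*$ and the sign of $\lambda_1$; the two weak forms $a$ and $b$ are structurally identical, so the estimate \eqref{zes} follows from the same four ingredients: Galerkin orthogonality, the discrete inf--sup bound \eqref{infsup2}, continuity of $b$, and a Jacobi approximation estimate. Subtracting the continuous equation \eqref{uweakb} from its discrete counterpart and restricting the test function to $w_N\in W_N$ yields the orthogonality relation $b(z-z_N,w_N)=0$ for all $w_N\in W_N$.

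First I would introduce the truncated projection. Writing $v:=\omega^{-\sigma^*,-\sigma}z$ and expanding $v=\sum_{m\ge 0}\hat v_m Q_m^{\sigma^*,\sigma}$ in the Jacobi basis orthogonal with respect to $\omega^{\sigma^*,\sigma}$, define $\Pi_N z:=\omega^{\sigma^*,\sigma}\sum_{m=0}^{N}\hat v_m Q_m^{\sigma^*,\sigma}\in Z_N$. A change of weight gives the identity $\|z-\Pi_N z\|_{\omega^{-\sigma^*,-\sigma}}=\|v-\sum_{m=0}^{N}\hat v_m Q_m^{\sigma^*,\sigma}\|_{\omega^{\sigma^*,\sigma}}$, so that the projection error is exactly the tail of the weighted $L^2$-orthogonal Jacobi expansion of $v$. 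Since $z_N-\Pi_N z\in Z_N$, the inf--sup condition \eqref{infsup2} bounds $\|z_N-\Pi_N z\|_{\omega^{-\sigma^*,-\sigma}}$ by $C^{-1}\sup_{0\neq w_N\in W_N} b(z_N-\Pi_N z,w_N)/\|w_N\|_{H^\alpha_{\omega^{\sigma,\sigma^*}}}$; using the orthogonality relation to replace $z_N$ by $z$ in the numerator and then the continuity of $b$ on $L^2_{\omega^{-\sigma^*,-\sigma}}\times H^\alpha_{\omega^{\sigma,\sigma^*}}$, this is in turn bounded by a constant times $\|z-\Pi_N z\|_{\omega^{-\sigma^*,-\sigma}}$. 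The triangle inequality then gives the quasi-optimal bound $\|z-z_N\|_{\omega^{-\sigma^*,-\sigma}}\le C\|z-\Pi_N z\|_{\omega^{-\sigma^*,-\sigma}}$.

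It remains to bound the projection error, and here I would invoke the standard approximation theory for the $\omega^{\sigma^*,\sigma}$-orthogonal Jacobi projection: the truncation error of $v$ decays like $N^{-m}|v|_{H^m_{\omega^{\sigma^*,\sigma}}}$ whenever $v\in H^m_{\omega^{\sigma^*,\sigma}}(\Omega)$. Combining this with the change-of-weight identity and recalling $v=\omega^{-\sigma^*,-\sigma}z$ produces \eqref{zes}, with the regularity index $m$ supplied by Lemma~\ref{regularityz}, which guarantees $\omega^{-\sigma^*,-\sigma}z\in H^{\min\{r+\alpha,\,2\alpha+\min(\sigma,\sigma^*)-1-\epsilon\}}_{\omega^{\sigma^*,\sigma}}(\Omega)$.

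The main obstacle is the continuity of $b$: because $b$ pairs an $L^2_{\omega^{-\sigma^*,-\sigma}}$ argument against the fractional operator $\mathcal{L}_\theta^\alpha(\omega^{\sigma,\sigma^*}\,\cdot)$, one must verify that $\mathcal{L}_\theta^\alpha(\omega^{\sigma,\sigma^*}w)$ and $D(\omega^{\sigma,\sigma^*}w)$ map $H^\alpha_{\omega^{\sigma,\sigma^*}}$ boundedly into $L^2_{\omega^{\sigma^*,\sigma}}$, so that the numerator in the inf--sup step is genuinely controlled by $\|z-\Pi_N z\|_{\omega^{-\sigma^*,-\sigma}}\,\|w_N\|_{H^\alpha_{\omega^{\sigma,\sigma^*}}}$. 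This rests on the diagonal mapping property \eqref{ptzz} of Lemma~\ref{lem21}, which makes the leading term of $b$ act diagonally on the bases $\{Q_m^{\sigma,\sigma^*}\}$ and $\{Q_m^{\sigma^*,\sigma}\}$, together with the weighted-norm estimates of Lemmas~\ref{lemma3.2}--\ref{lemma3.3} to handle the lower-order advection and reaction contributions. This is precisely the point already handled in \cite{hao2021,zheng2020} for the state form $a$, and it carries over verbatim after swapping $\sigma\leftrightarrow\sigma^*$ and reversing the sign of $\lambda_1$.
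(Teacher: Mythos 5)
Your proposal is correct and follows essentially the same route as the paper, which proves Lemma~\ref{z} simply by noting that the argument for the state equation (Lemma~\ref{u}, cited from \cite{hao2021,zheng2020}) carries over to the adjoint equation after swapping $\sigma\leftrightarrow\sigma^*$ and the sign of $\lambda_1$. Your write-up just makes explicit the standard machinery behind that analogy---Galerkin orthogonality, the inf--sup bound \eqref{infsup2}, continuity of $b$ via the diagonal property \eqref{ptzz}, and the weighted Jacobi projection estimate---which is exactly the content of the cited references.
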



\begin{Theorem}\label{converth}
Assume that $(u,z,q)$ and $(u_N,z_N,q_N)$ are solutions of \eqref{uweak} and \eqref{adjoint}, respectively. If $f\in H^{r}_{\omega^{\sigma^*,\sigma}}(\Omega)$,  $u_d\in H^{r}_{\omega^{\sigma,\sigma^*}}(\Omega)$ with
$r\geq0$,  we have the following error estimate
\begin{equation}\label{conver}
\left\|u-u_{N}\right\|_{{\omega^{-\sigma,-\sigma^*}}} +\|z-z_{N}\|_{{\omega^{-\sigma^*,-\sigma}}}+\|q-q_N\| \leq C N^{-m},
\end{equation}
where $m=\min \{ r + \alpha , 2\alpha + \min(\sigma,\sigma^*) - 1 - \epsilon\}.$
\end{Theorem}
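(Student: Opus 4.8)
The plan is to obtain the control error $\|q-q_N\|$ first, by playing the continuous and discrete variational inequalities against each other, and then to transfer that bound to the state and adjoint errors through the auxiliary problem \eqref{au}. First I would set $v=q_N$ in the continuous inequality \eqref{uweakc} and $v=q$ in the discrete one \eqref{adjointc}, and add the two. Since $q,q_N\in U_{ad}$ are admissible test functions, the control terms combine into $-\gamma\|q-q_N\|^2$ and leave
\[
\gamma\|q-q_N\|^2 \le (z-z_N,\,q_N-q).
\]
The whole proof then reduces to estimating the right-hand side, which I would split using the auxiliary adjoint state $z_N(q)$ of \eqref{aub}:
\[
(z-z_N,\,q_N-q) = (z-z_N(q),\,q_N-q) + (z_N(q)-z_N,\,q_N-q).
\]

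The main obstacle, and the crux of the argument, is to show that the second term is non-positive: a naive bound $(z_N(q)-z_N,q_N-q)\le\|z_N(q)-z_N\|\,\|q-q_N\|$ only yields $\gamma\|q-q_N\|\le CN^{-m}+C\|q-q_N\|$, which cannot be closed. To extract the favorable sign I would exploit the Petrov--Galerkin duality between $a$ and $b$. Subtracting \eqref{adjointa} from \eqref{aua} and \eqref{adjointb} from \eqref{aub} gives, writing $u_N(q)-u_N=\omega^{\sigma,\sigma^*}\tilde{e}$ with $\tilde{e}\in W_N$ and $z_N(q)-z_N=\omega^{\sigma^*,\sigma}\tilde{\eta}$ with $\tilde{\eta}\in V_N$,
\[
a(u_N(q)-u_N,\tilde{\eta}) = (q-q_N,\tilde{\eta})_{\omega^{\sigma^*,\sigma}}, \qquad b(z_N(q)-z_N,\tilde{e}) = (u_N(q)-u_N,\tilde{e})_{\omega^{\sigma,\sigma^*}}.
\]
The key identity is $a(u_N(q)-u_N,\tilde{\eta})=b(z_N(q)-z_N,\tilde{e})$, which follows from Lemma \ref{change_order} for the fractional term, integration by parts for the advection term (boundary terms vanish since $\omega^{\sigma,\sigma^*}$ and $\omega^{\sigma^*,\sigma}$ vanish at the endpoints), and the definitions of the weighted inner products for the reaction term. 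Equating the two right-hand sides and using $\omega^{\sigma^*,\sigma}\tilde{\eta}=z_N(q)-z_N$, $\omega^{\sigma,\sigma^*}\tilde{e}=u_N(q)-u_N$ then gives $(q-q_N,\,z_N(q)-z_N)=\|u_N(q)-u_N\|^2\ge 0$, i.e. $(z_N(q)-z_N,\,q_N-q)\le 0$, exactly the required sign.

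With the bad term discarded, I would estimate the good term by a weighted Cauchy--Schwarz inequality, using $\omega^{\sigma^*,\sigma}\le 1$ on $\Omega$ to pass from the weighted pairing to $\|q-q_N\|$, obtaining $\gamma\|q-q_N\|\le\|z-z_N(q)\|_{\omega^{-\sigma^*,-\sigma}}$. I would bound $\|z-z_N(q)\|_{\omega^{-\sigma^*,-\sigma}}$ by inserting the Galerkin approximation $z_N(u)$ of \eqref{auc}: the triangle inequality gives $\|z-z_N(q)\|\le\|z-z_N(u)\|+\|z_N(u)-z_N(q)\|$, where the first term is $O(N^{-m})$ by Lemma \ref{z}, and the second is controlled by the adjoint stability estimate \eqref{infsup2}, namely $\|z_N(u)-z_N(q)\|_{\omega^{-\sigma^*,-\sigma}}\le C\|u-u_N(q)\|_{\omega^{-\sigma,-\sigma^*}}$; finally $u-u_N(q)$ is the pure state Galerkin error with data $f+q$, which is $O(N^{-m})$ by Lemma \ref{u}, the regularity of $q$ (hence of $f+q$) being furnished by Theorem \ref{regularity}. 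This chain yields $\|q-q_N\|\le CN^{-m}$.

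Finally I would recover the state and adjoint errors. Splitting $u-u_N=(u-u_N(q))+(u_N(q)-u_N)$, the first piece is $O(N^{-m})$ by Lemma \ref{u} while the second is bounded by the state stability \eqref{infsup1}, $\|u_N(q)-u_N\|_{\omega^{-\sigma,-\sigma^*}}\le C\|q-q_N\|$, hence $O(N^{-m})$; an analogous split $z-z_N=(z-z_N(u))+(z_N(u)-z_N)$, with $z_N(u)-z_N$ solving the discrete adjoint equation with data $u-u_N$ so that $\|z_N(u)-z_N\|_{\omega^{-\sigma^*,-\sigma}}\le C\|u-u_N\|_{\omega^{-\sigma,-\sigma^*}}$ by \eqref{infsup2}, gives the adjoint bound. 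Adding the three estimates produces \eqref{conver}. All the Galerkin and stability bounds invoked hold for $N$ large enough, exactly as in Lemmas \ref{u} and \ref{z}, so the result holds for $N$ sufficiently large.
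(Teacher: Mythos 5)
Your proposal is correct and follows essentially the same route as the paper's proof: the same auxiliary problems \eqref{au}, the same duality argument between $a$ and $b$ (with test functions $\omega^{\sigma^*,\sigma}v_N=z_N(q)-z_N$ and $\omega^{\sigma,\sigma^*}w_N=u_N(q)-u_N$) showing $(z_N(q)-z_N,q_N-q)=-\|u_N(q)-u_N\|^2\leq 0$, the same weighted Cauchy--Schwarz with $\omega^{\sigma^*,\sigma}\leq 1$, and the same use of the inf-sup conditions \eqref{infsup1}--\eqref{infsup2} together with Lemmas \ref{u}, \ref{z} and Theorem \ref{regularity}. The only difference is cosmetic: you establish $\|q-q_N\|\leq CN^{-m}$ first and then transfer it to the state and adjoint errors, whereas the paper first bounds $\|u-u_N\|$ and $\|z-z_N\|$ in terms of $\|q-q_N\|$ and closes the control estimate afterward.
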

\begin{proof}
The errors can be expressed by
\begin{align*}
u-u_N&=u-u_N(q)+u_N(q)-u_N,\\
z-z_N&=z-z_N(u)+z_N(u)-z_N.
\end{align*}
From \eqref{uweak},  \eqref{aua} and \eqref{auc}, we know that $u_N(q)$ and $z_N(u)$ are the spectral Petrov-Galerkin approximation of $u$ and $z$, respectively. By the estimates \eqref{ues}, \eqref{zes} and Theorem
\ref{regularity}, we have
\begin{align}
\|u-u_N(q)\|_{\omega^{-\sigma,-\sigma^*}}&\leq CN^{-m},\label{e1}\\
\|z-z_N(u)\|_{\omega^{-\sigma^*,-\sigma}}&\leq CN^{-m},\label{e3}
\end{align}
where $m=\min \{ r + \alpha , 2\alpha + \min(\sigma , \sigma^*) - 1 - \epsilon \}$ determined by the regularity of state and adjoint state in Theorem \ref{regularity}.

Firstly, we give the error  estimate of $u_N$. Subtracting \eqref{aua} from \eqref{adjointa}, it follows
\begin{align}
a(u_N-u_N(q),v_N)&=(u_N-u_N(q),\mathcal{L}_{1-\theta}^{\alpha}(\omega^{\sigma^*,\sigma}v_N)-\lambda_1 D(\omega^{\sigma^*,\sigma}v_N)+\lambda_2\,\omega^{\sigma^*,\sigma}v_N)\nonumber\\
&=(q_N-q,v_N)_{\omega^{\sigma^*,\sigma}},\ \forall v_N\in V_N.\label{zhong1}
\end{align}
By \eqref{infsup1} and the Cauchy-Schwarz inequality we have
\begin{align}
C\|u_N-u_N(q)\|_{\omega^{-\sigma,-\sigma^*}}&\leq \sup\limits_{0\not=v_N\in V_N}\frac{a(u_N-u_N(q),v_N)}{\|v_N\|_{H^{\alpha}_{\omega^{\sigma^*,\sigma}}}}\nonumber\\
 &\leq \sup\limits_{0\not=v_N\in V_N}\frac{\|q-q_N\|_{\omega^{\sigma^*,\sigma}}\|v_N\|_{\omega^{\sigma^*,\sigma}}}{\|v_N\|_{H^{\alpha}_{\omega^{\sigma^*,\sigma}}}}\leq\|q-q_N\|\label{e2}
 \end{align}
 This combined with \eqref{e1} implies
 \begin{align}
 \|u-u_N\|_{\omega^{-\sigma,-\sigma^*}}\leq CN^{-m}+C\|q-q_N\|.\label{uerror}
 \end{align}

Secondly, The error estimate of $z_N$ can be obtained in a similar way. Substracting \eqref{auc} from \eqref{adjointb} gives
 \begin{align}
 b(z_N-z_N(u),w_N)
 =(u_N-u,w_N)_{\omega^{\sigma,\sigma^*}},\ \forall w_N\in W_N\label{zhong2}.
 \end{align}
Then by  \eqref{infsup2} and the Cauchy-Schwarz inequality we have
 \begin{align}
C\|z_N-z_N(u)\|_{\omega^{-\sigma^*,-\sigma}}&\leq \sup\limits_{0\not
=w_N\in W_N}\frac{b(z_N-z_N(u),w_N)}{\|w_N\|_{H_{\omega^{\sigma,\sigma^*}}^{\alpha}}}\nonumber\\
&\leq \sup\limits_{0\not=w_N\in W_N}\frac{\|u-u_N\|_{\omega^{\sigma,\sigma^*}}\|w_N\|_{\omega^{\sigma,\sigma^*}}}{\|w_N\|_{H_{\omega^{\sigma,\sigma^*}}^{\alpha}}}
\leq \|u-u_N\|_{\omega^{-\sigma,-\sigma^*}}.\label{zNu}
\end{align}
Combining with \eqref{e3} and \eqref{uerror}, we get
\begin{align}
\|z-z_N\|_{\omega^{-\sigma^*,-\sigma}}\leq CN^{-m}+C\|q-q_N\|.\label{zerror}
\end{align}
Finally, we determine the error estimate of $q_N$. By subtracting \eqref{adjointb} from \eqref{aub} we have
\begin{align}
b(z_N(q)-z_N,w_N)&=(z_N(q)-z_N,\mathcal{L}_{\theta}^{\alpha}(\omega^{\sigma,\sigma^*}w_N)+\lambda_1 D(\omega^{\sigma,\sigma^*}w_N)+\lambda_2\,\omega^{\sigma,\sigma^*}w_N)\nonumber\\&
=(u_N(q)-u_N,w_N)_{\omega^{\sigma,\sigma^*}},\ \forall w_N\in W_N.\label{zhong3}
\end{align}
It follows from \eqref{FOPCc} with $v=q_N$ and \eqref{adjointc} with $v=q$ that
\begin{align*}
\gamma\left\|q-q_N\right\|^2 &=\int_{\Omega} \gamma q\left(q-q_{N}\right)dx-\int_{\Omega} \gamma q_{N}\left(q-q_{N}\right)dx \\
& \leq \int_{\Omega} z\left(q_{N}-q\right)dx+\int_{\Omega} z_{N}\left(q-q_{N}\right)dx \\
&=\int_{\Omega}\left(z-z_N\left(q\right)\right)\left(q_{N}-q\right)dx+\int_{\Omega}\left(z_N\left(q\right)-z_{N}\right)\left(q_{N}-q\right)dx.
\end{align*}
 By taking $\omega^{\sigma^*,\sigma}v_N=z_N\left(q\right)-z_{N}$ in \eqref{zhong1} and $\omega^{\sigma,\sigma^*}w_N=u_N-u_N(q)$ in \eqref{zhong3}, we have
\begin{align*}
&\int_{\Omega}\left(z_N\left(q\right)-z_{N}\right)\left(q_{N}-q\right)dx\\
&=(u_N-u_N(q),\mathcal{L}_{1-\theta}^{\alpha}(z_N\left(q\right)-z_{N})-\lambda_1 D(z_N\left(q\right)-z_{N})+\lambda_2 (z_N\left(q\right)-z_{N}))\\
&=(z_N\left(q\right)-z_{N},\mathcal{L}_{\theta}^{\alpha}(u_N-u_N(q))+\lambda_1 D(u_N-u_N(q))+\lambda_2(u_N-u_N(q)))\\
&=-(u_N-u_N(q),u_N-u_N(q))\leq 0.
\end{align*}
 So we have
 \begin{align*}
 \gamma\left\|q-q_N\right\|^2&\leq\int_{\Omega}\left(z-z_N\left(q\right)\right)\left(q_{N}-q\right)dx\\
 &\leq \|z-z_N(q)\|_{\omega^{-\sigma^*,-\sigma}}\|q-q_N\|_{\omega^{\sigma^*,\sigma}}\\
 &\leq \|z-z_N(q)\|_{\omega^{-\sigma^*,-\sigma}}\|q-q_N\|.
 \end{align*}
Considering the above inequality and  $z-z_N(q)=z-z_N(u)+z_N(u)-z_N(q)$,  we get
 \begin{align}
 \left\|q-q_N\right\|\leq C\left(\|z-z_N(u)\|_{\omega^{-\sigma^*,-\sigma}}+\|z_N(u)-z_N(q)\|_{\omega^{-\sigma^*,-\sigma}}\right).\label{qerror}
 \end{align}

Subtracting \eqref{aub} from \eqref{auc}, it follows
$$b(z_N(u)-z_N(q),w_N)=(u-u_N(q),w_N), \forall w_N\in W_N.$$
Similar to the above derivation, by using \eqref{ues}, \eqref{infsup2} and Theorem \ref{regularity},  we have
\begin{align}
\|z_N(u)-z_N(q)\|_{\omega^{-\sigma^*,-\sigma}}\leq C\|u-u_N(q)\|_{\omega^{-\sigma,-\sigma^*}}\leq CN^{-m}.\label{zNuq}
\end{align}
Substituting \eqref{e3} and \eqref{zNuq} into \eqref{qerror}, it is obtained that
\begin{align}
\|q-q_N\|\leq CN^{-m}.\label{qerror1}
\end{align}
Combining the estimates \eqref{uerror}, \eqref{zerror} and\eqref{qerror1} lead us to attain the conclusion \eqref{conver}.
\end{proof}

\begin{section}{Numerical experiments}\label{section5}
In this section, we present a fast iteration algorithm for solving the discrete optimality conditions \eqref{adjoint} produced by the spectral Petrov-Galerkin method and give some  numerical examples to
verify the theoretical results.
\begin{subsection}{Numerical implementation}
Substituting $u_N=\omega^{\sigma,\sigma^*}\sum\limits^{N}_{n=0}\hat{u}_nQ_n^{\sigma,\sigma^*}$, $z_N=\omega^{\sigma^*,\sigma}\sum\limits^{N}_{n=0}\hat{z}_nQ_n^{\sigma^*,\sigma}$  into the discrete optimality conditions
\eqref{adjoint}  and taking $v_N=Q_m^{\sigma^*,\sigma},$ $w_N=Q_m^{\sigma,\sigma^*}$, $m=0,1,\cdots,N$, we obtain the following linear system by using \eqref{ptzz} and \eqref{ind}
\begin{subequations}\label{matrix}
\begin{numcases}
{}(S- \lambda_1 D+ \lambda_2 M)U=F,\label{matrixa}\\
(S+ \lambda_1\widehat{D}+\lambda_2 M^T)Z=G,\label{matrixb}\\
\gamma q_N=\max\{0,\hat{z}_0\, h_0^{\sigma^*,\sigma}\}-z_N.\label{matrixc}
\end{numcases}
\end{subequations}
where
\begin{align*}
S&=\diag(\lambda^{\alpha}_{1-\theta,0}\ h_0^{\sigma,\sigma^*},\lambda^{\alpha}_{1-\theta,1}\ h_1^{\sigma,\sigma^*},\cdots,\lambda^{\alpha}_{1-\theta,N}\ h_N^{\sigma,\sigma^*}),\\
U&=(\hat{u}_0,\hat{u}_1,\cdots,\hat{u}_N)^T,\ Z=(\hat{z}_0,\hat{z}_1,\cdots,\hat{z}_N)^T,\\
F&=(f_0,f_1,\cdots,f_N)^T,\   G=(g_0,g_1,\cdots,g_N)^T,
\end{align*}
  in which $f_m=(f+q_N,Q_m^{\sigma^*,\sigma})_{\omega^{\sigma^*,\sigma}}$,  $g_m=(u_N-u_d,Q_m^{\sigma,\sigma^*})_{\omega^{\sigma,\sigma^*}}$, $m=0,1,\cdots,N$. Moreover, $M$, $D$ and $\widehat{D}$ are the $(N+1)\times(N+1)$
  matrices with entries
\begin{eqnarray*}\begin{aligned}
M_{mn}&=(\omega^{\alpha,\alpha}Q_n^{\sigma,\sigma^*},Q_m^{\sigma^*,\sigma}),\\
D_{mn}&=-(m+1)(\omega^{\alpha-1,\alpha-1}Q_n^{\sigma,\sigma^*},Q_{m+1}^{\sigma^*-1,\sigma-1}),\\
\widehat{D}_{mn}&=-(m+1)(\omega^{\alpha-1,\alpha-1}Q_n^{\sigma^*,\sigma},Q_{m+1}^{\sigma-1,\sigma^*-1}).
\end{aligned}\end{eqnarray*}
The above linear optimization system \eqref{matrix} can be solved by the projected gradient method, see \cite{hinze2009, li2019, wang2021} for more details, which requires $O(N^2)$ storage and $O(N^3)$
computational complexity.
To improve the computational complexity, following the idea in \cite{hao2021} we present a fast projected gradient method with linear  storage $O(N)$  and quasilinear computational cost $O(N\log^2N)$.

Denote $A=S- \lambda_1 D+ \lambda_2 M$, $B=S+\lambda_1 \widehat{D}+ \lambda_2 M^T$, we use the fixed-point iteration
\begin{align}
U^{m+1}&=U^m+P^{-1}(F-AU^m),\label{AU}\\
Z^{m+1}&=Z^m+\widehat{P}^{-1}(G-BZ^m),\label{BZ}
\end{align}
where the preconditioners $P=S- \lambda_1 K+ \lambda_2 Q $ and $\widehat{P}=S+\lambda_1\widehat{K}+\lambda_2 Q $ with
\begin{equation}\label{eq:Q}
	Q=\diag(h_0^{\alpha,\alpha},h_1^{\alpha,\alpha},\cdots,h_N^{\alpha,\alpha} ),
	\end{equation}
 $K$ and $\widehat{K}$ are two tridiagonal matrices with
 \begin{eqnarray*}\begin{aligned}
 K_{n,n+1}&=D_{n,n+1}, \ K_{n+1,n}=D_{n+1,n},\\
 \widehat{K}_{n,n+1}&=\widehat{D}_{n,n+1}, \ \widehat{K}_{n+1,n}=\widehat{D}_{n+1,n},\ n=0,1,\cdots,N,
 \end{aligned}\end{eqnarray*}
  and other entries are zero. In each iteration, without forming the matrices $A$ and $B$, we compute the matrix-vector products $AU$ and $BZ$ by using the fast polynomial transforms and the fast matrix-vector product
  \cite{town2018} for Toeplitz-dot-Hankel matrices (can be written as a Hadamard product of a Toeplitz and Hankel matrix).

Let ${\bf{{Q}}}_k^{\gamma,\beta}=(Q_0^{\gamma,\beta},Q_1^{\gamma,\beta},\cdots,Q_k^{\gamma,\beta})^T$, $k\geq0$. we have the following transformation formulas \cite{askey1975} of Jacobi ploynomials
\begin{equation}\label{eq:ctransform}
	{\bf{{Q}}}_k^{\gamma,\beta}=C_k^{\gamma\rightarrow\sigma,\beta}{\bf{{Q}}}_k^{\sigma,\beta},\ {\bf{{Q}}}_k^{\sigma,\delta}=C_k^{\sigma,\delta\rightarrow\beta}{\bf{{Q}}}_k^{\sigma,\beta}.
	\end{equation}
 Here $C_k^{\gamma\rightarrow\sigma,\beta}$ and $C_k^{\sigma,\delta\rightarrow\beta}$ are $(k+1)\times(k+1)$ lower triangular conversion matrices and can be decomposed into 
 matrices
 \begin{eqnarray}\label{TH}\begin{aligned}
 C_k^{\gamma\rightarrow\sigma,\beta}=\overline{D}_1(T_1\circ H_1)\overline{D}_2,\ C_k^{\sigma,\delta\rightarrow\beta}=\overline{D}_3(T_2\circ H_2)\overline{D}_4,
 \end{aligned}\end{eqnarray}
 where $\overline{D}_i$, $i=1,2,3,4$, are diagonal matrices, $T_i$, $i=1,2$ are lower triangular Toeplitz matrices, $H_i$, $i=1,2$ are Hankel matrices and `$\circ$' is the Hadamard matrix product. We refer to
  \cite{hao2021} for explicit entries of these matrices.

 Denote $u_N=(1-x)^{\sigma} x^{\sigma^*}\tilde{u}_N$, by using \eqref{TH} we have
\begin{equation}\label{eq:un}
	\tilde{u}_N=\sum\limits^{N}_{n=0}\hat{u}_nQ_n^{\sigma,\sigma^*}=({\bf{Q}}_N^{\sigma,\sigma^*})^TU=({\bf{{Q}}}_N^{\alpha,\alpha})^T(C_N^{\alpha,\sigma^*\rightarrow\alpha})^T(C_N^{\sigma\rightarrow\alpha,\sigma^*})^T \
 U:=({\bf{Q}}_N^{\alpha,\alpha})^T\ U^{\alpha},
 \end{equation}
where $U^{\alpha}=(\hat{u}^{\alpha}_0,\hat{u}^{\alpha}_1,\cdots,\hat{u}^{\alpha}_N)^T$ can be obtained by the fast Jacobi-to-Jacobi polynomial transform \cite{town2018}. Note that $({\bf{Q}}_N^{\alpha,\alpha})^T\ U^{\alpha}=\sum\limits^{N}_{n=0}\hat{u}^{\alpha}_nQ_n^{\alpha,\alpha}$.

In the following we show the algorithm for computing $AU=(S-\lambda_1 D+ \lambda_2 M)U$. Note that $S$ is a diagonal matrix, so we just need consider $DU$ and $MU$.
For the fast matrix-vector product of $MU$, we have
 \begin{eqnarray}\label{MU}\begin{aligned}
 MU&=\int_{\Omega}\omega^{\alpha,\alpha}{\bf{Q}}_N^{\sigma^*,\sigma}({\bf{Q}}_N^{\sigma,\sigma^*})^Tdx\ U  \\
 &=\int_{\Omega}\omega^{\alpha,\alpha}C_N^{\sigma^*,\sigma\rightarrow\alpha}C_N^{\sigma^*\rightarrow\alpha,\alpha}{\bf{Q}}_N^{\alpha,\alpha}(C_N^{\sigma\rightarrow\alpha,\sigma^*}C_N^{\alpha,\sigma^*\rightarrow\alpha}{\bf{Q}}_N^{\alpha,\alpha})^Tdx\
 U\\
  &=C_N^{\sigma^*,\sigma\rightarrow\alpha}C_N^{\sigma^*\rightarrow\alpha,\alpha}\ Q\ (C_N^{\alpha,\sigma^*\rightarrow\alpha})^T(C_N^{\sigma\rightarrow\alpha,\sigma^*})^T \ U\\
  &=C_N^{\sigma^*,\sigma\rightarrow\alpha}C_N^{\sigma^*\rightarrow\alpha,\alpha}\ Q\ U^{\alpha}.
 \end{aligned}\end{eqnarray}
 By \eqref{TH}, $C^{\sigma^*,\sigma\rightarrow\alpha}_N$ and $C^{\sigma^*\rightarrow\alpha,\alpha}_N$ both are Toeplitz-dot-Hankel matrices, so  $MU$ can be computed without forming a matrix by using the fast matrix-vector
 product \cite{town2018}.

To fulfill the fast matrix-vector product for $DU$, we denote
 \begin{align}
  &\widehat{Q}=\diag(h_0^{\alpha,\alpha},h_1^{\alpha,\alpha},\cdots,h_N^{\alpha,\alpha},0),\label{eq:qhat} \\
  &U^{\alpha-1}=(\hat{u}^{\alpha-1}_0,\hat{u}^{\alpha-1}_1,\cdots,\hat{u}^{\alpha-1}_N)^T,\label{eq:ualpha-1}\\
  &\Lambda=-\diag(1,2,\cdots,N+1),\label{eq:Lambda}
 \end{align}
 and  introduce a $(N+2)\times (N+1)$ matrix $W$ defined as
 \begin{align*}
 W&= \int_{\Omega}\omega^{\alpha-1,\alpha-1}{\bf{Q}}_{N+1}^{\sigma^*-1,\sigma-1}({\bf{Q}}_N^{\sigma,\sigma^*})^Tdx\\
 &=
 \int_{\Omega}\omega^{\alpha-1,\alpha-1}C_{N+1}^{\sigma^*-1,\sigma-1\rightarrow\alpha-1}C_{N+1}^{\sigma^*-1\rightarrow\alpha-1,\alpha-1}{\bf{Q}}_{N+1}^{\alpha-1,\alpha-1}(C_N^{\sigma\rightarrow\alpha-1,\sigma^*}C_N^{\alpha-1,\sigma^*\rightarrow\alpha-1}{\bf{Q}}_N^{\alpha-1,\alpha-1})^Tdx\\
 &=C_{N+1}^{\sigma^*-1,\sigma-1\rightarrow\alpha-1}C_{N+1}^{\sigma^*-1\rightarrow\alpha-1,\alpha-1}\widehat{Q}(C_N^{\alpha-1,\sigma^*\rightarrow\alpha-1})^T(C_N^{\sigma\rightarrow\alpha-1,\sigma^*})^T.
 \end{align*}
   Similarly, we have
   \begin{eqnarray*}\begin{aligned}
   \tilde{u}_N&=\sum\limits^{N}_{n=0}\hat{u}_nQ_n^{\sigma,\sigma^*}=({\bf{Q}}_N^{\sigma,\sigma^*})^TU=({\bf{Q}}_N^{\alpha-1,\alpha-1})^T(C_N^{\alpha-1,\sigma^*\rightarrow\alpha-1})^T(C_N^{\sigma\rightarrow\alpha-1,\sigma^*})^TU\\
   &:=({\bf{Q}}_N^{\alpha-1,\alpha-1})^T\ U^{\alpha-1}=\sum\limits^{N}_{n=0}\hat{u}^{\alpha-1}_n\ Q_n^{\alpha-1,\alpha-1}.
   \end{aligned}\end{eqnarray*}
  Therefore, $WU$ also can be obtained without forming a matrix by using  the fast matrix-vector product combined with the fast Jacobi-to-Jacobi polynomial transform.
  For the advection term, we can write
 \begin{align}\label{eq:wu}
 DU=\Lambda\widehat{W}U,
 \end{align}
 where $\widehat{W}$ denotes the sub-matrix of $W$ formed by removing its first row, and $\widehat{W}U$ can be obtained conveniently by removing the first element of vector $WU$.

 In summary, for the fast calculation of $AU$, we give
 \begin{equation}\label{eq:AU}
 AU=SU-\lambda_1 \Lambda\widehat{W}U+ \lambda_2 C_N^{\sigma^*,\sigma\rightarrow\alpha}C_N^{\sigma^*\rightarrow\alpha,\alpha}\ Q\ U^{\alpha},
 \end{equation}
where $\Lambda$, $Q$, $ C_N^{\sigma^*,\sigma\rightarrow\alpha}$, $C_N^{\sigma^*\rightarrow\alpha,\alpha}$, $U^{\alpha}$ and $\widehat{W}U$ are given above, see \eqref{eq:Lambda}, \eqref{eq:Q}, \eqref{eq:ctransform}, \eqref{eq:un} and \eqref{eq:wu}, respectively.

Fast calculation of matrix-vector product $BZ$ in the fixed-point iteration \eqref{BZ}  can  be fulfilled by an
 analogous way. We omit the details here for brevity.  Moreover, the right hand side $F$ can also be obtained in a similar manner,  see \eqref{f} in next subsection  for more details.

 The fast projected gradient method can be summarized as follows:
 \begin{algorithm}[H]
\caption{Fast projected gradient algorithm}

\label{alg:SA}

\begin{algorithmic}[1]

\STATE  Solving the system \eqref{matrix} with  $N=8$ by the general projected gradient method combined with a direct solver to get the initial guess
$(U^0,Z^0,q_N)$.

\STATE Setting $error=1$, and given the tolerance $\epsilon$.
\WHILE{$error>\epsilon$}
     \STATE Solving the state equation in \eqref{matrixa} by the fixed-point iteration \eqref{AU}:
     $$U^{m+1}=U^m+P^{-1}(F-AU^m),$$
    where $AU^m$ can be computed by \eqref{eq:AU}.
     \STATE Solving the adjoint state equation  in \eqref{matrixb} by the fixed-point iteration \eqref{BZ}:
     $$Z^{m+1}=Z^m+\widehat{P}^{-1}(G-BZ^m),$$
     where $BZ^m$ can be computed in a similar manner to $AU^m$.
     \STATE Computing the control variable $q_N$:\\
$$q_N^{\text{new}} =\max\{0, \frac{\hat{z}_0\,h_0^{\sigma^*,\sigma}}{\gamma}\}-\frac{z_N}{\gamma}.$$

 \STATE Calculate the error by
 $error:=norm(q_N-q_N^{\text{new}},inf)/norm(q_N,inf).$
 \STATE
 Update the control variable $q_N=q_N^{\text{new}}$
\ENDWHILE              
\end{algorithmic}
\end{algorithm}
 \end{subsection}

\begin{subsection}{Numerical results}

In the optimal control problem \eqref{FOCPa1}-\eqref{FOCPb1} with $\gamma=1$, we take
\begin{align}
f=(1-x)^{\beta}x^{\beta}\sin x,\ u_d=(1-x)^{\beta}x^{\beta}\cos x.\label{fud}
\end{align}

The right hand side $f_m$ and $g_m$ can be computed as $(MU)_m$. Specifically, we evaluate $\sin x$ and $\cos x$ at the Chebyshev collocation points $x_i$ ($1\leq i \leq M$, $M\geq N$), and by the inverse fast Chebyshev
transform \cite{town2018} we can obtain the approximations
\begin{equation}\label{f}
f\approx\omega^{\beta,\beta}\sum\limits_{n=0}^{M}\hat{f}^{-1/2}Q_n^{-1/2,-1/2},\ u_d\approx\omega^{\beta,\beta}\sum\limits_{n=0}^{M}\hat{g}^{-1/2}Q_n^{-1/2,-1/2}.
\end{equation}
Then invoking the fast Jacobi-to-Jacobi polynomial transform separately on $f$ and $q_N$ in $f_m$ ($u_d$ and $u_N$ in $g_m$), the right hand terms $F$ and $G$ in \eqref{matrix} can be observed in an
analogous treatment as in \eqref{MU}.

In the computation, we take $\lambda_1=\lambda_2=1$ and  measure the errors in the following sense:
\begin{equation*}\begin{aligned}
E_N^{a,b}(p)=\frac{\|p_N-p_{N_r}\|_{\omega^{a,b}}}{\|p_{N_r}\|_{\omega^{a,b}}},
\end{aligned}\end{equation*}
where $a,\,b>-1$, $p_N$ is the numerical solution and $p_{N_r}$ is the reference solution computed by the same solver but with a very fine resolution, $N_r=2^{14}$.  In Table \ref{Table1} cited from \cite{hao1}, we present the numerical
values of $(\sigma,\sigma^*)$ corresponding to different $\alpha$ and $\theta$, which were  calculated by using Newton's method with a tolerance $10^{-16}$.

\begin{example}\label{example1}
 Take $\beta=0$ in \eqref{fud} and $\theta=0.7$ in  \eqref{FOCPb1}. Note that $f\in H^{\infty}_{\omega^{\sigma^*,\sigma}}(\Omega)$, $u_d\in H^{\infty}_{\omega^{\sigma,\sigma^*}}(\Omega)$.
\end{example}

 As $f$ and $u_d$ are analytic, we have
 $$\omega^{-\sigma,-\sigma^*}u\in H^{2\alpha+\min\{\sigma,\sigma^*\}-1-\epsilon}_{\omega^{\sigma,\sigma^*}}(\Omega),\ \omega^{-\sigma^*,-\sigma}z\in H^{2\alpha+\min\{\sigma,\sigma^*\}-1-\epsilon}_{\omega^{\sigma^*,\sigma}}(\Omega)$$
 by Theorem \ref{regularity}. From Theorem \ref{converth}, the convergence order of  numerical solutions $(u_N,z_N,q_N)$ is expected to be $2\alpha+\min\{\sigma,\sigma^*\}-1-\epsilon$.
We test the  performance of the spectral Petrov-Galerkin method and the fast projected gradient method for different values of $\alpha$. In Table \ref{Table2}, we measure the  convergence orders and errors of
state $u_N$ in $\|\cdot\|_{\omega^{-\sigma,-\sigma^*}}$ norm,  adjoint state $z_N$ and  control $q_N$ in $\|\cdot\|_{\omega^{-\sigma^*,-\sigma}}$ norm. In Table \ref{Table3}, we show the convergence orders and errors of
numerical solutions in standard $L^2$-norm.

Data in Table \ref{Table2} show that the convergence orders of $u_N$ in $\|\cdot\|_{\omega^{-\sigma,-\sigma^*}}$ norm and $z_N$ in $\|\cdot\|_{\omega^{-\sigma^*,-\sigma}}$ norm are
$2\alpha+\min\{\sigma,\sigma^*\}-1-\epsilon$, as expected from Theorem \ref{converth}. In Table \ref{Table3}, we observe that the convergence orders of  control $q_N$ in standard $L^2$-norm are better than expected
$2\alpha+\min\{\sigma,\sigma^*\}-1-\epsilon$, while its convergence orders in $\|\cdot\|_{\omega^{-\sigma^*,-\sigma}}$ norm are numerically close to $2\alpha+\min\{\sigma,\sigma^*\}-1-\epsilon$, see Table \ref{Table2}.
Moreover, numerical results in Tables \ref{Table2} and \ref{Table3} also show that the convergence orders and accuracy of  numerical solutions $(u_N,z_N,q_N)$ in standard  $L^2$-norm are higher than these in weighted
$L^2$-norm with negative index, which numerically indicate that it is meaningful to consider the spectral Petrov-Galerkin method for the optimal control problem in weighted Sobolev space.

In the last two rows of Tables, we list the iterative numbers and CPU time required for the fast projected gradient algorithm with different values of $N$ and $\alpha$. We observe that the CPU time increases roughly as
$O(N\log^2N)$ and the iteration numbers are independent on truncation numbers $N$. However, the iteration numbers decrease with $\alpha$: when $\alpha=1.2$, iteration number is $51$ while the number is 9 for $\alpha=1.8$,
which suggests the need of searching for better preconditioner $P$.
\begin{table}[htbp]
\scriptsize
\caption{The numerical values of $(\sigma,\sigma^*)$ corresponding to different $\alpha$ and $\theta$ (cited from \cite{hao1}).}
\label{Table1}
\centering
\begin{tabular}{ccccc}
\toprule
 $\theta$ &$\alpha$=1.2 & $\alpha$=1.4 & $\alpha$=1.6 & $\alpha$=1.8\\
 \midrule
 0.5      & (0.6000, 0.6000) &(0.7000, 0.7000)& (0.8000, 0.8000)& (0.9000, 0.9000)\\
 \specialrule{0em}{2pt}{2pt}
 0.7      & (0.8829, 0.3171) &(0.8602, 0.5398) &(0.8900, 0.7100) &(0.9411, 0.8589)  \\
 \specialrule{0em}{2pt}{2pt}
 1.0      & (1.0000, 0.2000)& (1.0000, 0.4000) &(1.0000, 0.6000) &(1.0000, 0.8000)\\
 \bottomrule
\end{tabular}
\end{table}
\begin{table}[htbp]
\scriptsize
\caption{Convergence orders and errors of the spectral Petrov-Galerkin method  with $\theta=0.7$, $f=\sin x$ and $u_d=\cos x$ in weighted norm. The expected convergence order is $2\alpha+\min\{\sigma,\sigma^*\}-1-\epsilon$  (Theorems
\ref{regularity} and \ref{converth}).}
\label{Table2}
\centering
\scalebox{0.92}{\begin{tabular}{cccccccccc}
\toprule
$\alpha$ & $N$ &$E^{-\sigma,-\sigma^*}_N(u)$ &  order& $E^{-\sigma^*,-\sigma}_N(z)$ &   order &$E^{-\sigma^*,-\sigma}_N(q)$ & order& iter &CPU(s) \\
\midrule
\multirow{4}{*}{1.2}
& 128       &  3.74e-05     &      *     &    5.61e-05     &     *     &   5.61e-05     &     *    &   51     &     6.70    \\
     & 256       &  7.46e-06    &    2.33      &  1.15e-05    &    2.28       &  1.15e-05    &    2.28      &  51     &     7.46    \\
     & 512       &  1.55e-06    &    2.27      &  2.51e-06    &    2.20       &  2.51e-06    &    2.20      &  51     &    10.16    \\
     & 1024       &  3.38e-07    &    2.20      &  5.83e-07    &    2.10       &  5.83e-07    &    2.10      &  51     &    27.25    \\
\hline
\multicolumn{2}{c}{Expected order}&  & 1.72 & &1.72 & & & &  \\
\hline

\multirow{4}{*}{1.4}

& 128       &  1.56e-06     &      *     &    2.41e-06     &     *     &   2.41e-06     &     *    &   15     &     1.17    \\
    & 256       &  3.05e-07    &    2.35      &  4.78e-07    &    2.33       &  4.78e-07    &    2.33      &  15     &     1.50    \\
    & 512       &  6.00e-08    &    2.35      &  9.46e-08    &    2.34       &  9.46e-08    &    2.34      &  15     &     2.14    \\
    & 1024       &  1.18e-08    &    2.35      &  1.87e-08    &    2.34       &  1.87e-08    &    2.34      &  15     &     6.25    \\
 \hline
\multicolumn{2}{c}{Expected order}&  & 2.34 & & 2.34& & & &  \\
\hline

\multirow{4}{*}{1.6}

& 128       &  8.47e-08     &      *     &    1.09e-07     &     *     &   1.09e-07     &     *    &   12     &     0.68    \\
    & 256       &  1.15e-08    &    2.89      &  1.15e-05    &    2.88       &  1.48e-08    &    2.88      &  12     &     0.97    \\
    & 512       &  1.54e-09    &    2.90      &  2.51e-06    &    2.90       &  1.99e-09    &    2.90      &  12     &     1.34    \\
    & 1024       &  2.05e-10    &    2.91      &  5.83e-07    &    2.90       &  2.66e-10    &    2.90      &  12     &     4.08    \\
 \hline
\multicolumn{2}{c}{Expected order}&  & 2.91 & &2.91 & & & &  \\

\hline

\multirow{4}{*}{1.8}

& 128       &  3.02e-09     &      *     &    3.29e-09     &     *     &   3.29e-09     &     *    &   9     &     0.48    \\
    & 256       &  2.81e-10    &    3.42      &  3.07e-10    &    3.42       &  3.07e-10    &    3.42      &  9     &     0.69    \\
    & 512       &  2.58e-11    &    3.44      &  2.83e-11    &    3.44       &  2.83e-11    &    3.44      &  9     &     1.01    \\
    & 1024       &  2.37e-12    &    3.45      &  2.60e-12    &    3.44       &  2.60e-12    &    3.44      &  9     &     3.04    \\
 \hline
\multicolumn{2}{c}{Expected order}&  & 3.46 & & 3.46& & & &  \\
 \bottomrule
\end{tabular}}
\end{table}

\begin{table}[htbp]
\scriptsize
\caption{Convergence orders and errors of the spectral Petrov-Galerkin method  with $\theta=0.7$, $f=\sin x$ and $u_d=\cos x$ in standard $L^2$-norm (higher than that in Table \ref{Table2}).}
\label{Table3}
\centering
\scalebox{0.92}{\begin{tabular}{cccccccccc}
\toprule
$\alpha$ & $N$ &$E^{0,0}_N(u)$ &  order& $E^{0,0}_N(z)$ &   order &$E^{0,0}_N(q)$ & order& iter &CPU(s) \\
\midrule
\multirow{4}{*}{1.2}
& 64       &  8.44e-05     &      *     &    1.35e-04     &     *     &   1.35e-04     &     *    &   51     &     3.41    \\
     & 128       &  1.14e-05    &    2.89      &  1.90e-05    &    2.82       &  1.90e-05    &    2.82      &  51     &     6.68    \\
     & 256       &  1.62e-06    &    2.82      &  2.92e-06    &    2.71       &  2.92e-06    &    2.71      &  51     &     7.52    \\
     & 512       &  2.57e-07    &    2.65      &  5.19e-07    &    2.49       &  5.19e-07    &    2.49      &  51     &    10.10    \\

\hline

\multirow{4}{*}{1.4}

 & 64       &  2.45e-06     &      *     &    3.79e-06     &     *     &   3.79e-06     &     *    &   15     &     0.55    \\
    & 128       &  3.72e-07    &    2.72      &  5.86e-07    &    2.69       &  5.86e-07    &    2.69      &  15     &     1.17    \\
    & 256       &  5.58e-08    &    2.74      &  8.87e-08    &    2.73       &  8.87e-08    &    2.73      &  15     &     1.39    \\
    & 512       &  8.25e-09    &    2.76      &  1.32e-08    &    2.75       &  1.32e-08    &    2.75      &  15     &     2.10    \\

\hline

\multirow{4}{*}{1.6}

& 64       &  1.74e-07     &      *     &    2.24e-07     &     *     &   2.24e-07     &     *    &   12     &     0.32    \\
    & 128       &  1.78e-08    &    3.29      &  1.90e-05    &    3.28       &  2.31e-08    &    3.28      &  12     &     0.70    \\
    & 256       &  1.77e-09    &    3.33      &  2.92e-06    &    3.33       &  2.29e-09    &    3.33      &  12     &     0.90    \\
    & 512       &  1.72e-10    &    3.36      &  5.19e-07    &    3.36       &  2.23e-10    &    3.36      &  12     &     1.33    \\

\hline

\multirow{4}{*}{1.8}

 & 64       &  8.56e-09     &      *     &    9.35e-09     &     *     &   9.35e-09     &     *    &   9     &     0.23    \\
    & 128       &  5.98e-10    &    3.84      &  6.54e-10    &    3.84       &  6.54e-10    &    3.84      &  9     &     0.49    \\
    & 256       &  4.02e-11    &    3.90      &  4.39e-11    &    3.90       &  4.39e-11    &    3.90      &  9     &     0.66    \\
    & 512       &  2.65e-12    &    3.92      &  3.15e-12    &    3.80       &  3.15e-12    &    3.80      &  9     &     1.01    \\

 \bottomrule
\end{tabular}}
\end{table}
\begin{example}\label{example2}
 Take $\beta=-0.4$ in \eqref{fud} and  $\theta=0.5,\,0.7,\,1$ in \eqref{FOCPb1}.   Note that $$f\in H^{2\beta+\min\{\sigma,\sigma^*\}+1-\epsilon}_{\omega^{\sigma^*,\sigma}}(\Omega),\, u_d\in
 H^{2\beta+\min\{\sigma,\sigma^*\}+1-\epsilon}_{\omega^{\sigma,\sigma^*}}(\Omega),$$ by Lemma B.4 in \cite{hao1}.
\end{example}

By Theorem \ref{regularity}, we have
\begin{align*}
\omega^{-\sigma,-\sigma^*}u\in H^{\min\{2\beta+\alpha+1,2\alpha-1\}+\min\{\sigma,\sigma^*\}-\epsilon}_{\omega^{\sigma,\sigma^*}}(\Omega),\\
\text{and}\ \ \omega^{-\sigma^*,-\sigma}z\in
H^{\min\{2\beta+\alpha+1,2\alpha-1\}+\min\{\sigma,\sigma^*\}-\epsilon}_{\omega^{\sigma^*,\sigma}}(\Omega).
\end{align*}
 According to Theorem \ref{converth}, we  expect that the convergence order of state $u_N$ in
$\|\cdot\|_{\omega^{-\sigma,-\sigma^*}}$ norm, adjoint state $z_N$ in $\|\cdot\|_{\omega^{-\sigma^*,-\sigma}}$ norm and control $q_N$ in standard $L^2$-norm will be
$\min\{2\beta+\alpha+1,2\alpha-1\}+\min\{\sigma,\sigma^*\}-\epsilon$. In Tables \ref{Table4}-\ref{Table6}, we  test the convergence orders of numerical solutions and examine the performance of the fast projected gradient
algorithm for $\theta=0.5$, $0.7$ and $1$, respectively.

The numerical results confirm the theoretically predicted orders for state $u_N$ and adjoint state $z_N$ with different values of $\theta$ and  $\alpha$.  It is shown that the $L^2$-errors of  control $q_N$ have higher
convergence order than expected  and its convergence order in $\|\cdot\|_{\omega^{-\sigma^*,-\sigma}}$ norm is close to $\min\{2\beta+\alpha+1,2\alpha-1\}+\min\{\sigma,\sigma^*\}-\epsilon$.

 Note that when $\theta=0.5$, $\sigma=\sigma^*=\alpha/2$, the two-side fractional derivative operator $L^{\alpha}_{\theta}$ is equivalent to the \emph{integral} fractional Laplacian in one-dimensional and the spectral Petrov-Galerkin
 method is reduced to the spectral Galerkin method in \cite{wang2021} for the optimal control problem \eqref{FOCPa1}-\eqref{admissibleset} with $\theta=0.5$. Table \ref{Table4} shows that our fast projected gradient
 algorithm is several times faster than the fast algorithm presented in \cite{wang2021} for the same problem. Moreover, from Tables \ref{Table4}-\ref{Table6}, we find that the iterative numbers and CPU time were greatly
 reduced for $\theta=0.5$ and $1$, which inspired us to search for a better preconditioner in the future work.
\begin{table}[H]
\scriptsize
\caption{Convergence orders and errors of the spectral Petrov-Galerkin method  with $\theta=0.5$, $f=(1-x)^{-0.4}x^{-0.4}\sin x$ and $u_d=(1-x)^{-0.4}x^{-0.4}\cos x$. The expected convergence order is
$2\beta+3\alpha/2+1-\epsilon$  (Theorems \ref{regularity} and \ref{converth}).}
\label{Table4}
\centering
\scalebox{0.92}{
\begin{tabular}{cccccccccccc}
\toprule
$\alpha$ & $N$ &$E^{-\frac{\alpha}{2},-\frac{\alpha}{2}}_N(u)$ &  order& $E^{-\frac{\alpha}{2},-\frac{\alpha}{2}}_N(z)$ &   order &$E^{-\frac{\alpha}{2},-\frac{\alpha}{2}}_N(q)$ & order& $E^{0,0}_N(q)$ & order& iter &CPU(s)
\\
\midrule
\multirow{4}{*}{1.2}
 & 128       &  1.04e-03     &      *     &    7.17e-04     &     *     &   7.17e-04     &     *    &   2.33e-04     &     *    &   33     &     3.52    \\
     & 256       &  2.96e-04    &    1.81      &  2.05e-04    &    1.81       &  2.05e-04    &    1.81     &  5.01e-05    &    2.22   &  33     &     4.01    \\
     & 512       &  8.11e-05    &    1.87      &  5.60e-05    &    1.87       &  5.60e-05    &    1.87     &  1.00e-05    &    2.32     &  33     &     5.64    \\
     & 1024       &  2.16e-05    &    1.91      &  1.49e-05    &    1.91       &  1.49e-05    &    1.91      &  1.91e-06    &    2.39 &  33     &    15.27    \\
\hline
\multicolumn{2}{c}{Expected order}&  & 2.0 & &2.0 & & & & 2.0&& \\
\hline

\multirow{4}{*}{1.4}

  & 128       &  3.17e-06     &      *     &    6.10e-06     &     *     &   6.10e-06     &     *   &   1.07e-06     &     *  &   14     &     0.99    \\
    & 256       &  6.51e-07    &    2.28      &  1.31e-06    &    2.21       &  1.31e-06    &    2.21     &  1.70e-07    &    2.66   &  14     &     1.34    \\
    & 512       &  1.35e-07    &    2.27      &  2.80e-07    &    2.23       &  2.80e-07    &    2.23      &  2.63e-08    &    2.69 &  14     &     1.81    \\
    & 1024       &  2.80e-08    &    2.26      &  5.90e-08    &    2.25       &  5.90e-08    &    2.25     &  4.03e-09    &    2.71   &  14     &     5.44    \\
 \hline
\multicolumn{2}{c}{Expected order}&  & 2.3 & & 2.3& & & &2.3 && \\
\hline

\multirow{4}{*}{1.6}

 & 128       &  1.09e-06     &      *     &    1.26e-06     &     *     &   1.26e-06     &     *   &   2.22e-07     &     *   &   11     &     0.67    \\
    & 256       &  1.85e-07    &    2.55      &  2.05e-04    &    2.55       &  2.16e-07    &    2.55    &  2.77e-08    &    3.01   &  11     &     0.96    \\
    & 512       &  3.12e-08    &    2.57      &  5.60e-05    &    2.57       &  3.64e-08    &    2.57    &  3.36e-09    &    3.04   &  11     &     1.36    \\
    & 1024       &  5.22e-09    &    2.58      &  1.49e-05    &    2.58       &  6.10e-09    &    2.58    &  4.03e-10    &    3.06  &  11     &     4.07    \\
 \hline
\multicolumn{2}{c}{Expected order}&  & 2.6 & &2.6 & & & & 2.6&& \\

\hline

\multirow{4}{*}{1.8}

 & 128       &  3.40e-07     &      *     &    2.76e-07     &     *     &   2.76e-07     &     *   &   4.83e-08     &     *  &   9     &     0.47    \\
    & 256       &  4.65e-08    &    2.87      &  3.78e-08    &    2.87       &  3.78e-08    &    2.87     &  4.76e-09    &    3.34   &  9     &     0.66    \\
    & 512       &  6.29e-09    &    2.89      &  5.13e-09    &    2.88       &  5.13e-09    &    2.88       &  4.61e-10    &    3.37 &  9     &     0.98    \\
    & 1024       &  8.47e-10    &    2.89      &  6.92e-10    &    2.89       &  6.92e-10    &    2.89      &  4.42e-11    &    3.38 &  9     &     2.99    \\

 \hline
\multicolumn{2}{c}{Expected order}&  & 2.9 & & 2.9& & & &2.9 && \\
 \bottomrule
\end{tabular}}
\end{table}

\begin{table}[H]
\scriptsize
\caption{Convergence orders and errors of the spectral Petrov-Galerkin method  with $\theta=0.7$, $f=(1-x)^{-0.4}x^{-0.4}\sin x$ and $u_d=(1-x)^{-0.4}x^{-0.4}\cos x$. The expected convergence order is
$2\beta+\min\{\sigma,\sigma^*\}+\alpha+1-\epsilon$  (Theorems \ref{regularity} and \ref{converth}).}
\label{Table5}
\centering
\scalebox{0.92}{\begin{tabular}{cccccccccccc}
\toprule
$\alpha$ & $N$ &$E^{-\sigma,-\sigma^*}_N(u)$ &  order& $E^{-\sigma^*,-\sigma}_N(z)$ &   order &$E^{-\sigma^*,-\sigma}_N(q)$ & order&  $E^{0,0}_N(q)$ & order&iter &CPU(s) \\
\midrule
\multirow{4}{*}{1.2}
& 128       &  2.43e-05     &      *     &    1.65e-04     &     *     &   1.65e-04     &     *   &   5.10e-05     &     *  &   51     &     6.77    \\
     & 256       &  5.44e-06    &    2.16      &  4.18e-05    &    1.98       &  4.18e-05    &    1.98     &  9.57e-06    &    2.41  &  51     &     7.50    \\
     & 512       &  1.54e-06    &    1.83      &  1.11e-05    &    1.91       &  1.11e-05    &    1.91     &  1.97e-06    &    2.28 &  51     &    10.15    \\
     & 1024       &  4.77e-07    &    1.69      &  3.05e-06    &    1.86       &  3.05e-06    &    1.86     &  4.37e-07    &    2.17   &  51     &    27.55    \\
\hline
\multicolumn{2}{c}{Expected order}&  & 1.72 & &1.72 & & & & 1.72&& \\
\hline

\multirow{4}{*}{1.4}

 & 128       &  2.41e-06     &      *     &    2.49e-06     &     *     &   2.49e-06     &     *    &   4.51e-07     &     *  &   15     &     1.19    \\
    & 256       &  5.75e-07    &    2.07      &  5.50e-07    &    2.18       &  5.50e-07    &    2.18   &  7.38e-08    &    2.61     &  15     &     1.50    \\
    & 512       &  1.34e-07    &    2.10      &  1.22e-07    &    2.17       &  1.22e-07    &    2.17       &  1.21e-08    &    2.61  &  15     &     2.24    \\
    & 1024       &  3.10e-08    &    2.12      &  2.69e-08    &    2.18       &  2.69e-08    &    2.18      &  1.95e-09    &    2.63  &  15     &     6.33    \\
 \hline
\multicolumn{2}{c}{Expected order}&  & 2.14 & & 2.14& & & &2.14 & & \\
\hline

\multirow{4}{*}{1.6}

& 128       &  8.87e-07     &      *     &    6.82e-07     &     *     &   6.82e-07     &     *   &   1.38e-07     &     *  &   12     &     0.69    \\
    & 256       &  1.52e-07    &    2.54      &  4.18e-05    &    2.61       &  1.12e-07    &    2.61   &  1.66e-08    &    3.06    &  12     &     0.96    \\
    & 512       &  2.60e-08    &    2.55      &  1.11e-05    &    2.63       &  1.80e-08    &    2.63     &  1.94e-09    &    3.10  &  12     &     1.37    \\
    & 1024       &  4.44e-09    &    2.55      &  3.05e-06    &    2.65       &  2.87e-09    &    2.65     &  2.23e-10    &    3.12   &  12     &     4.14    \\
 \hline
\multicolumn{2}{c}{Expected order}&  & 2.51 & &2.51 & & & &2.14 & &\\

\hline

\multirow{4}{*}{1.8}

 & 128       &  3.14e-07     &      *     &    2.25e-07     &     *     &   2.25e-07     &     * &   4.13e-08     &     *     &   9     &     0.49    \\
    & 256       &  4.26e-08    &    2.88      &  3.01e-08    &    2.90       &  3.01e-08    &    2.90    &  3.98e-09    &    3.37  &  9     &     0.70    \\
    & 512       &  5.73e-09    &    2.89      &  3.99e-09    &    2.92       &  3.99e-09    &    2.92    &  3.77e-10    &    3.40    &  9     &     1.04    \\
    & 1024       &  7.69e-10    &    2.90      &  5.25e-10    &    2.93       &  5.25e-10    &    2.93    &  3.53e-11    &    3.42   &  9     &     3.07    \\
 \hline
\multicolumn{2}{c}{Expected order}&  & 2.86 & & 2.86& & & &2.86 & &\\
 \bottomrule
\end{tabular}}
\end{table}

\begin{table}[htbp]
\scriptsize
\caption{Convergence orders and errors of the spectral Petrov-Galerkin method  with $\theta=1$, $f=(1-x)^{-0.4}x^{-0.4}\sin x$ and $u_d=(1-x)^{-0.4}x^{-0.4}\cos x$. The expected convergence order is
$2\beta+\min\{\sigma,\sigma^*\}+\alpha+1-\epsilon$ (Theorems \ref{regularity} and \ref{converth}).}
\label{Table6}
\centering
\scalebox{0.92}{\begin{tabular}{cccccccccccc}
\toprule
$\alpha$ & $N$ &$E^{-\sigma,-\sigma^*}_N(u)$ &  order& $E^{-\sigma^*,-\sigma}_N(z)$ &    order &$E^{-\sigma^*,-\sigma}_N(q)$ & order & $E^{0,0}_N(q)$ & order &iter &CPU(s) \\
\midrule
\multirow{4}{*}{1.2}
 & 128       &  7.31e-05     &      *     &    7.98e-05     &     *     &   7.98e-05     &     *    &  3.19e-05     &     *            &  14     &     0.57    \\
     & 256       &  2.40e-05    &    1.61      &  2.66e-05    &    1.58       &  2.66e-05    &    1.58  & 9.11e-06    &    1.81        &  14     &     0.82    \\
     & 512       &  7.83e-06    &    1.62      &  8.82e-06    &    1.59       &  8.82e-06    &    1.59   &  2.59e-06    &    1.82          &  14     &     1.31    \\
     & 1024       &  2.54e-06    &    1.62      &  2.90e-06    &    1.60       &  2.90e-06    &    1.60    &  7.35e-07    &    1.82        &  14     &     3.78    \\
\hline
\multicolumn{2}{c}{Expected order}&  & 1.6 & &1.6 & & & & 1.6 & & \\
\hline

\multirow{4}{*}{1.4}

   & 128       &  7.92e-06     &      *     &    5.37e-06     &     *     &   5.37e-06     &     *     &   1.02e-06     &     * &   11     &     0.39    \\
    & 256       &  1.99e-06    &    1.99      &  1.36e-06    &    1.98       &  1.36e-06    &    1.98   &  1.98e-07    &    2.37       &  11     &     0.54    \\
    & 512       &  4.94e-07    &    2.01      &  3.38e-07    &    2.01       &  3.38e-07    &    2.01     &  3.67e-08    &    2.43  &  11     &     0.95    \\
    & 1024       &  1.22e-07    &    2.02      &  8.35e-08    &    2.02       &  8.35e-08    &    2.02     &  6.65e-09    &    2.47  &  11     &     2.77    \\
 \hline
\multicolumn{2}{c}{Expected order}&  & 2.0 & & 2.0& & & & 2.0& & \\
\hline

\multirow{4}{*}{1.6}

  & 128       &  1.45e-06     &      *     &    3.56e-07     &     *     &   3.56e-07     &     *    &   8.03e-08     &     *   &   11     &     0.30    \\
    & 256       &  2.77e-07    &    2.39      &  2.66e-05    &    2.64       &  5.71e-08    &    2.64   &  9.07e-09    &    3.15   &  11     &     0.43    \\
    & 512       &  5.28e-08    &    2.39      &  8.82e-06    &    2.61       &  9.38e-09    &    2.61     &  1.02e-09    &    3.16  &  11     &     0.76    \\
    & 1024       &  1.00e-08    &    2.40      &  2.90e-06    &    2.56       &  1.59e-09    &    2.56     &  1.15e-10    &    3.14 &  11     &     2.23    \\
 \hline
\multicolumn{2}{c}{Expected order}&  & 2.4 & &2.4 & & & &  2.4& & \\

\hline

\multirow{4}{*}{1.8}

  & 128       &  3.64e-07     &      *     &    1.75e-07     &     *     &   1.75e-07     &     *   &   3.45e-08     &     * &   11     &     0.29    \\
    & 256       &  5.08e-08    &    2.84      &  2.27e-08    &    2.95       &  2.27e-08    &    2.95     &  3.23e-09    &    3.42  &  11     &     0.43    \\
    & 512       &  7.09e-09    &    2.84      &  2.91e-09    &    2.96       &  2.91e-09    &    2.96      &  2.96e-10    &    3.45 &  11     &     0.74    \\
    & 1024       &  9.90e-10    &    2.84      &  3.72e-10    &    2.97       &  3.72e-10    &    2.97    &  2.69e-11    &    3.46   &  11     &     2.18    \\

 \hline
\multicolumn{2}{c}{Expected order}&  & 2.8 & & 2.8& & & &2.8 & & \\
 \bottomrule
\end{tabular}}
\end{table}
\end{subsection}
\end{section}
\begin{section}{Conclusion}
In this paper, we have investigated a spectral Petrov-Galerkin method for the optimal control problem governed by a two-sided space-fractional diffusion-advection-reaction equation. To compensate the weak singularities of
solutions near the boundary, we have analyzed the regularity of the fractional optimal control problem in weighted Sobolev space. If the regularity index of $f$ and $u_d$ is $r$, $r\geq0$, and $q \in L^2(\Omega)$, then the
regularity index of state $u$ and adjoint state  $z$ is $\min \{ r + \alpha , s \}$,  where $s = 2\alpha + \min\{\sigma,\sigma^* \}-1-\epsilon$,  $\epsilon>0$ is arbitrary small, $\sigma$ and  $\sigma^*$ are constants
defined as \eqref{sigma} depending on the fractional order $\alpha$ and $\theta$. Based on the regularity results,  we have provided error estimates of the spectral Petrov-Galerkin approximations to  $u$ in $L^2_{\omega^{-\sigma,-\sigma^*}}$-norm, $z$ in $L^2_{\omega^{-\sigma^*,-\sigma}}$-norm  and  $q$ in $L^2$-norm, where the convergence order is consistent with the obtained regularity index. 

A fast projected gradient algorithm with linear storage and quasi-linear complexity has been presented to solve the resulting discrete system efficiently. Numerical examples have verified the theoretical findings and shown the efficiency of the fast algorithm.  For the smooth source term $f$ and desired state $u_d$ in Example \ref{example1}, as well
$f$, $u_d$ with low regularity in Example \ref{example2}, numerical results have illustrated that our error estimate for state $u_N$ and adjoint state $z_N$ are optimal, while the convergence orders of control $q_N$ in standard
$L^2$-norm are better than the theoretical expectation.

An improved error estimate for the control $q_N$ and a better preconditioner which can further improve the performance (iterative numbers and CPU time) of the fast algorithm  can be considered in future work. Moreover, we are working at numerial methods for optimal control problems governed by time-fractional/space-fractional diffusion equations with additive noises.
\end{section}
\section*{Reference}

\bibliographystyle{plain}

 \end{document}